\newcommand{\nc}{\newcommand}
\nc{\renc}{\renewcommand}
\nc{\ssec}{\subsection}
\nc{\sssec}{\subsubsection}
\newtheorem{theorem}[subsubsection]{Theorem}
\newtheorem*{theorem*}{Theorem}
\newtheorem{theor}{Theorem}
\newtheorem{lm}[subsubsection]{Lemma}
\newtheorem*{lm*}{Lemma}
\newtheorem{prop}[subsubsection]{Proposition}
\newtheorem{pro}[theor]{Proposition}
\newtheorem{cor}[subsubsection]{Corollary}
\newtheorem*{cor*}{Corollary} 
\newtheorem{co}[theor]{Corollary}
\theoremstyle{definition}
\newtheorem{df}[subsubsection]{Definition}
\newtheorem{rem}[subsubsection]{Remark}
\newtheorem{re}[theor]{Remark}
\newtheorem*{re*}{Remark}
\newtheorem{notation}[subsubsection]{Notation}
\newtheorem*{notation*}{Notation}
\newtheorem{step}{Step}
\let\c@equation\c@subsubsection
\newcommand{\Maps}{\mathop{\mathrm{Maps}}\nolimits}
\newcommand{\Hom}{\mathop{\mathrm{Hom}}\nolimits}
\newcommand{\Loc}{\mathop{\mathrm{L}}\nolimits}
\newcommand{\Fin}{\mathop{\mathrm{Fin}}\nolimits}
\newcommand{\Gm}{\mathop{\mathbb{G}_m}\nolimits}
\newcommand{\spk}{\mathop{\mathrm{Spec} \, k}\nolimits}
\newcommand{\spl}{\mathop{\mathrm{Spec} \, L}\nolimits}
\newcommand{\Ker}{\mathop{\mathrm{Ker}}\nolimits}
\newcommand{\Coker}{\mathop{\mathrm{Coker}}\nolimits}
\newcommand{\coeq}{\mathop{\mathrm{coeq}}\nolimits}
\newcommand{\eff}{\mathop{\mathrm{eff}}\nolimits}
\newcommand{\SL}{\mathop{\mathrm{SL}}\nolimits}
\newcommand{\msl}{\mathop{\mathrm{MSL}}\nolimits}
\newcommand{\mgl}{\mathop{\mathrm{MGL}}\nolimits}
\newcommand{\hz}{\mathop{\mathrm{H}\widetilde{\mathbb{Z}}}\nolimits}
\newcommand{\gw}{\mathop{\mathrm{GW}}\nolimits}
\newcommand{\SH}{\mathop{\mathcal{SH}}\nolimits}
\newcommand{\SHh}{\mathop{\mathrm{SH}}\nolimits}
\newcommand{\Ee}{\mathop{\mathcal{E}}\nolimits}
\newcommand{\op}{\mathop{\mathrm{op}}\nolimits}
\newcommand{\gp}{\mathop{\mathrm{gp}}\nolimits}
\newcommand{\Gr}{\mathop{\mathrm{Gr}}\nolimits}
\newcommand{\GR}{\mathop{\widetilde{\mathrm{Gr}}}\nolimits}
\newcommand{\id}{\mathop{\mathrm{id}}\nolimits}
\newcommand{\red}{\mathop{\mathrm{red}}\nolimits}
\newcommand{\pr}{\mathop{\mathrm{pr}}\nolimits}
\newcommand{\const}{\mathop{\mathrm{const}}\nolimits}
\newcommand{\Th}{\mathop{\mathrm{Th}}\nolimits}
\newcommand{\Fr}{\mathop{\mathrm{Fr}}\nolimits}
\newcommand{\ZF}{\mathop{\mathbb Z \mathrm F}\nolimits}
\newcommand{\PSh}{\mathop{\mathrm{PSh}}\nolimits}
\newcommand{\Nis}{\mathop{\mathrm{Nis}}\nolimits}
\newcommand{\rs}{\mathop{\mathrm{RS}}\nolimits}
\newcommand{\Chw}{\mathop{\widetilde{\mathrm{CH}}}\nolimits}
\newcommand{\Cor}{\mathop{\widetilde{\mathrm{Cor}}}\nolimits}
\newcommand{\Z}{{\mathbb Z}}
\newcommand{\Ll}{{\mathcal L}}
\newcommand{\N}{{\mathbb N}}
\newcommand{\Pp}{{\mathbb P}}
\newcommand{\Gg}{{\mathbb G}}
\newcommand{\Aa}{{\mathbb A}}
\newcommand{\K}{{\mathrm{K}}}
\newcommand{\Kk}{{\mathbf{K}}}
\newcommand{\m}{{\mathfrak m}}
\newcommand{\Oo}{{\mathcal O}}
\newcommand{\E}{{\mathcal E}}
\newcommand{\C}{{\mathcal C}}
\newcommand{\Hrm}{{\mathrm H}}
\newcommand{\Tau}{{\mathcal T}}
\newcommand{\TAU}{\widetilde{{\mathcal T}}}
\newcommand{\chr}{\mathop{\mathrm{char}}\nolimits}
\newcommand{\supp}{\mathop{\mathrm{supp}}\nolimits}
\newcommand{\col}{\mathop{\mathrm{colim}}\nolimits}
\newcommand{\colim}{\mathop{\mathrm{colim}}}
\newcommand{\smk}{\mathop{\mathrm{Sm}_k}\nolimits}
\newcommand{\sus}{\mathop{\Sigma_{\mathbb P^1}^{\infty}}\nolimits}
\newcommand{\del}{\mathop{\Delta^{\bullet}_k}\nolimits}
\newcommand{\unit}{\mathop{\mathds{1}}\nolimits}
\title[The unit map of MSL]{The unit map of the \\ algebraic special linear cobordism spectrum}
\author{Maria Yakerson}
\address{Fakultät für Mathematik\\
Universität Regensburg\\
Universitätstr. 31\\
93040 Regensburg\\
Germany}
\email{\href{mailto:maria.yakerson@ur.de}{maria.yakerson@ur.de}}
\urladdr{\url{https://www.muramatik.com}}
\thanks{The author was supported by SFB/TR 45 “Periods, moduli spaces and arithmetic of algebraic varieties”}
\keywords{Framed correspondences, motivic homotopy groups}
\subjclass[2010]{14F42; 14F99}
\date{\today}
\begin{document}
\maketitle

\begin{abstract}
In joint work with Elmanto, Hoyois, Khan and Sosnilo~\cite{deloop3}, we computed infinite $\Pp^1$-loop spaces of motivic Thom spectra, using the technique of framed correspondences. This result allows us to express non-negative $\Gg_m$-homotopy groups of motivic Thom spectra in terms of geometric generators and relations. Using this explicit description, we show that the unit map of the algebraic special linear cobordism spectrum induces an isomorphism on $\Gg_m$-homotopy sheaves. 

\end{abstract}

\tableofcontents

\section{Introduction}

In algebraic topology, an important resource for analyzing the stable homotopy groups of spheres is given by the unit map of the complex cobordism spectrum $\mathrm{MU}$. This map has at least two features:
1) it induces an isomorphism on $\pi_0 = \Z$;
2) it detects nilpotence, giving rise to the field of chromatic homotopy theory. 

It would be interesting to know if similar techniques apply in motivic homotopy theory, for studying the motivic stable homotopy groups of spheres. There is not much yet known about motivic nilpotence phenomena (see the recent work of Bachmann and Hahn~\cite{BHNilpotence}). On the other hand, the abelian group $\pi_0$ of a spectrum is replaced by a richer invariant in motivic settings. For a motivic $\Pp^1$-spectrum $\E$ one considers a sequence of Nisnevich sheaves of abelian groups  $\{\underline{\pi}_0(\E)_l\}_{l \in \Z}$, called a homotopy module. One may ask an analogous question: does the unit map of a motivic cobordism spectrum induce an isomorphism of homotopy modules? 

The first guess would be to consider the unit map of the algebraic cobordism spectrum $\mgl$, which is the motivic analogue of $\mathrm{MU}$, constructed by Voevodsky~\cite{VoevodskyA1-HomThy}. As it turns out, the induced map on homotopy modules kills $\eta$, the motivic Hopf element. More precisely, Hoyois has shown that the unit map of $\mgl$ factors through the map $\unit_S / \eta \to \mgl$, which induces an isomorphism of homotopy modules~\cite[Theorem~3.8]{HoyoisAlgCobordism}. One could ask if there is another algebraic cobordism spectrum \glqq closer\grqq  \,to the motivic sphere spectrum $\unit_S$. Indeed, for the algebraic special linear cobordism spectrum $\msl$ (a motivic analogue of  $\mathrm{MSU}$, constructed by Panin and Walter~\cite{PW-MSLMSp}) the unit map induces an  isomorphism of homotopy modules. This can be shown by studying the geometry of oriented Grassmanians in a similar fashion to Hoyois' proof, as stated in~\cite[Example~16.34]{BHNorms}. However, we would like to understand this comparison in an explicit way. In this paper, we interpret both homotopy modules in terms of geometric generators and relations, and then compare them directly. 
 
Classically, the celebrated Pontryagin-Thom theorem identifies the $n$-th stable homotopy group of spheres with the group of $n$-dimensional smooth compact manifolds equipped with a trivialization of the stable normal bundle (so called framing), modulo the bordism equivalence relation. 
An approach for getting an analogous result for motivic stable homotopy groups was suggested by Voevodsky in his unpublished notes~\cite{VoevodskyNotesFrames}, where he introduced a notion of a 
a framed correspondence between smooth schemes $X$ and $Y$ over a base field $k$. In the simplest case $X = Y = \spk$ his construction gives a geometric version of framed points in topology. In more detail, a framed correspondence $c$ of level $n \geqslant 0$ is given by a closed subscheme $Z \subset \Aa^n_X$ (the support of $c$), finite over $X$; an \'etale neighborhood $U$ of $Z$ in $\Aa^n_X$; a morphism $\phi\colon U \to \Aa^n$, cutting out $Z$ as the preimage of $0$ (the framing of Z); and a morphism $g\colon U \to Y$. 
As Voevodsky observed, the set of framed correspondences $\Fr_n(X, Y)$ is in bijection with the set of morphisms of pointed Nisnevich sheaves from $(\Pp^1, \infty)^{\wedge n} \wedge X_+$ to $ \Loc_{\Nis} ((\Aa^1 / \Aa^1-0)^{\wedge n} \wedge Y_+)$. This bijection provides an explicit map
\[
\Theta_n \colon \Fr_n(X, Y) \to \Maps_{\SH(k)}(\Sigma^{\infty}_T X_+, \Sigma^{\infty}_T Y_+),
\]
where right-hand side is the mapping space  in the motivic stable homotopy $\infty$-category $\SH(k)$. 

In a series of papers, Ananyevskiy, Druzhinin, Garkusha, Neshitov and Panin  developed a theory of framed motives~\cite{GPFramedMotives}, \cite{AGPCancellation}, \cite{GPHomInvPresheaves}, \cite{GNPRelativeMotivicSpheres}, \cite{DPChar2}. As one of their main results, they computed infinite $\Pp^1$-loop spaces of  $\Pp^1$-suspension spectra in terms of framed correspondences, when $k$ is a perfect field. 
In particular, Garkusha and Panin proved in~\cite[Corollary~11.3]{GPFramedMotives} that for $l \geqslant 0$ the map $\Theta = \colim \Theta_n$ induces an isomorphism 
\[
\Coker(\ZF(\Aa^1_k, \Gg_m^{\wedge l}) \xrightarrow{i_1^*-i_0^*} \ZF(\spk, \Gg_m^{\wedge l})) \xrightarrow{\sim}  [\unit_k, \Sigma_{\Gg_m}^l \unit_k]_{\SHh(k)} = \pi_0(\unit_k)_l(k), 
\]
where $\ZF(X, Y)$ is the stabilized free abelian group on framed correspondences from $X$ to $Y$, modulo equivalences $c \sqcup d \sim c+d$. One can think of the left-hand side as of  $H_0(\ZF(\Delta^{\bullet}_k, \Gg_m^{\wedge l}))$, i.e. the zeroth homology of the framed version of the Suslin complex.
When the field $k$ has characteristic $0$, Neshitov has computed $H_0(\ZF(\Delta^{\bullet}_k, \Gg_m^{\wedge l}))$  as the Milnor-Witt K-theory $\K_l^{MW}(k)$~\cite{NeshitovFramedCorrMW}, recovering in that case the famous computation of the homotopy module of the motivic sphere spectrum by Morel~\cite[Theorem~6.40]{MorelA1-alg-top}. 

In  joint work with Elmanto, Hoyois, Khan and Sosnilo~\cite{deloop3}, we computed infinite $\Pp^1$-loop spaces of Thom spectra of virtual vector bundles of rank $0$ (more generally, of non-negative rank) in terms of generalizations of framed correspondences~\cite[Corollary~3.2.4]{deloop3}. As an application of this result, we can reinterpret the unit map of the spectrum $\msl$ on the level of $\Gg_m$-homotopy groups in terms of explicit geometric data. 

\begin{pro}[see Proposition~\ref{prop:unit map msl}]
\label{pro: unit map msl}
Let $k$ be a perfect field, and let $l \geqslant 0$.
Then the unit map $e_* \colon \pi_0(\unit_k)_l(k) \to \pi_0(\msl)_l(k)$ is canonically identified with the map 
\[
\varepsilon_* \colon H_0(\ZF(\Delta^{\bullet}_k, \Gg_m^{\wedge l})) \to H_0(\ZF^{\SL}(\Delta^{\bullet}_k, \Gg_m^{\wedge l})).
\]
\end{pro}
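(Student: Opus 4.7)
The plan is to first apply \cite[Corollary~3.2.4]{deloop3} to compute $\pi_0(\msl)_l(k)$ as $H_0(\ZFS(\del, \Gg_m^{\wedge l}))$, in parallel with the Garkusha-Panin computation for the sphere spectrum recalled in the introduction, and then to match the unit map $e_*$ with the forgetful map $\varepsilon_*$ under these two identifications.

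First I would recall that $\msl$ is, by the construction of Panin-Walter, the motivic Thom spectrum of the tautological virtual rank-$0$ bundle over the ind-scheme $B\SL = \col B\SL_n$. So \cite[Corollary~3.2.4]{deloop3} is applicable and presents $\Omega^{\infty}_{\Pp^1}\msl$ by a framed-correspondence model in which the framings carry $\SL$-structures. Running the Garkusha-Panin argument (in particular the analogue of \cite[Corollary~11.3]{GPFramedMotives}) in this $\SL$-framed setting should then identify $\pi_0(\msl)_l(k)$ with $H_0(\ZFS(\del, \Gg_m^{\wedge l}))$, because the non-$\SL$-specific ingredients of that argument --- the passage from the framed $S^1$-spectrum to the $\Pp^1$-spectrum and the extraction of $\pi_0$ via the zeroth homology of the Suslin complex with $\Gg_m^{\wedge l}$-coefficients --- are formal and do not use the particular form of the framing data.

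Next I would trace through the unit map $\unit_k \to \msl$. On the level of Thom spectra it is induced by the basepoint inclusion $\spk \hookrightarrow B\SL$; equivalently, it records the fact that a tangential framing canonically determines an $\SL$-structure via its top exterior power. Since the equivalence of \cite[Corollary~3.2.4]{deloop3} is natural in the underlying moduli space, $e_*$ should under our identifications correspond, at the level of explicit generators, to the forgetful map $\Fr_n \to \Frs_n$ sending a framing to the $\SL$-framing it induces. Passing to the free abelian groups, taking Nisnevich sheafification, smashing with $\Gg_m^{\wedge l}$, and taking $H_0$ of the Suslin complex then yields precisely $\varepsilon_*$.

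The main obstacle is the functoriality step: one must verify that the equivalence of \cite[Corollary~3.2.4]{deloop3} is sufficiently natural in the base so that the map on $\Pp^1$-loop spaces induced by $\spk \to B\SL$ is intertwined with the forgetful map $\ZF \to \ZFS$ on framed correspondences. Provided this naturality is available from the \cite{deloop3} machinery (as one expects, since the equivalence is built functorially out of a universal construction on pointed motivic spaces), the remainder of the proof is essentially a formal replay of the Garkusha-Panin computation in the $\SL$-framed setting.
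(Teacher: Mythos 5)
You follow the same overall route as the paper: use \cite[Corollary~3.2.4]{deloop3} to present $\Omega^\infty_{\Pp^1}\msl$ by framed correspondences valued in the tautological bundles $\TAU_n \to \GR_n$, identify $\pi_0(\msl)_l(k)$ with $H_0(\ZF^{\SL}(\del,\Gg_m^{\wedge l}))$, and match the unit map with the forgetful map $\varepsilon$ by naturality of the comparison. The one substantive divergence is the middle step, and as you state it it is shaky: you propose to ``run the Garkusha--Panin argument in the $\SL$-framed setting,'' but the framed-motives machinery behind \cite[Corollary~11.3]{GPFramedMotives} (the cone theorem, cancellation, the framed $S^1$-spectrum) is developed only for Voevodsky's framed correspondences and its inputs are genuinely geometric, not formal; re-establishing it for $E$-framed correspondences is neither available off the shelf nor necessary. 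The paper instead extracts $\pi_0$ directly from the infinite-loop-space statement by an elementary computation (Lemma~\ref{lm:group completion linear}): the group completion of $\pi_0$ of the $\Aa^1$-localized monoid $\Fr_E(-,Y)$ is identified with $H_0(\ZF_E(\del\times X,Y))$ via explicit $\Aa^1$-homotopies comparing the $\Ee_\infty$-sum with the disjoint-union relation defining $\ZF_E$; this works uniformly in $E$. The naturality you flag as the main obstacle is unproblematic in the paper's setup, because the equivalence is induced by the concrete maps $\Theta_E$ of Voevodsky's lemma, for which the compatibility of $e_n\colon T^n\hookrightarrow \Th_{\GR_n}(\TAU_n)$ with the embedding $\Aa^n\hookrightarrow\TAU_n$ over the distinguished point is visible from the formulas. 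Two bookkeeping steps you omit but would need to record: the reindexing $\col_n\ZF_{\TAU_n\times V}(-,\GR_n\times V)\simeq\ZF^{\SL}(-,V)$ (Lemma~\ref{lm:linear frames}) and the splitting argument passing from $\Gg_m^l$ to $\Gg_m^{\wedge l}$.
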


Here the right-hand side is constructed out of $\SL$-oriented framed correspondences, introduced in Section~\ref{ssec:SL-cor}. Such a correspondence of level $n$ is the same set of data as the usual framed correspondence, except that here a framing is a map $\phi \colon U \to \TAU_n$, where $\TAU_n \to \GR_n$ is the tautological bundle over the oriented Grassmanian $ \GR_n = \GR(n, \infty)$. The support is cut out as the preimage of the zero section of $\TAU_n$. There is a natural map $\varepsilon_n \colon \Fr_n(X, Y) \to \Fr_n^{\SL}(X, Y)$, given by embedding $\Aa^n \hookrightarrow \TAU_n$ as the fiber over the distinguished point of $\GR_n$. 
It induces a functor $\Fr_*(k) \to \Fr_*^{\SL}(k)$ between categories, where objects are smooth $k$-schemes and morphisms are given by ($\SL$-oriented) framed correspondences.

We prove the following comparison result, which was originally suggested by Ivan Panin.

\begin{theor}[see Theorem~\ref{thm:main}
]
\label{theo: compare ZF and ZF^SL}
Assume that $\chr k = 0$. Then the induced map 
\[
\varepsilon_* \colon H_0(\ZF(\Delta^{\bullet}_k, \Gg_m^{\wedge *})) \to H_0(\ZF^{\SL}(\Delta^{\bullet}_k, \Gg_m^{\wedge *}))
\]
 is an isomorphism of non-negatively graded rings.
\end{theor}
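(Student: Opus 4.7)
The strategy is to reduce, via Proposition~\ref{pro: unit map msl}, to the assertion that the unit map $e_* \colon \pi_0(\unit_k)_l(k) \to \pi_0(\msl)_l(k)$ is an isomorphism for every $l \geqslant 0$. The functor $\varepsilon \colon \Fr_*(k) \to \Frs_*(k)$ sends a trivial framing $U \to \Aa^n$ to its tautological $\SL$-orientation and is manifestly compatible with direct sums, hence symmetric monoidal; consequently $\varepsilon_*$ is automatically a map of non-negatively graded rings and the content of the theorem lies entirely in showing bijectivity in each fixed weight.

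For the source I would invoke Neshitov's computation~\cite{NeshitovFramedCorrMW} together with the Garkusha--Panin identification recalled in the introduction to obtain a canonical isomorphism $H_0(\ZF(\del, \Gg_m^{\wedge l})) \cong \K^{MW}_l(k)$; this is where $\chr k = 0$ enters, through Neshitov's use of resolution of singularities. For the target, the main recognition theorem of~\cite{deloop3} analogously identifies $H_0(\ZF^{\SL}(\del, \Gg_m^{\wedge l}))$ with $\pi_0(\msl)_l(k)$. The theorem thus becomes the statement that $e_* \colon \K^{MW}_l(k) \to \pi_0(\msl)_l(k)$ is an isomorphism, and both sides are now presented by concrete geometric cycles amenable to direct comparison.

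Surjectivity I would establish by showing that any $\SL$-framed correspondence is $\Aa^1$-equivalent, after enlarging the level $n$, to one whose framing factors through the fiber of $\TAU_n \to \GR_n$ over the distinguished point, i.e.\ lies in the image of $\varepsilon$. The construction proceeds in two moves: first, exploit a stabilization property of the oriented Grassmanian to deform the induced classifying map $\psi \colon U \to \GR_n$ so that its restriction to the finite support $Z$ becomes constant at the basepoint; second, once the framing lands in the linear fibers, contract along them using their $\Aa^1$-contractibility. Running the same argument on a homotopy of level-$n$ correspondences shows that every class in $H_0(\ZF^{\SL}(\del, \Gg_m^{\wedge l}))$ lies in the image of $\varepsilon_*$.

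The main obstacle is injectivity. Given $c, c' \in \ZF(\spk, \Gg_m^{\wedge l})$ and an $\SL$-oriented homotopy $H \in \ZF^{\SL}(\Aa^1_k, \Gg_m^{\wedge l})$ with boundary $\varepsilon_*(c - c')$, one has to manufacture an ordinary homotopy from $c$ to $c'$. The plan is to modify $H$ within its $\SL$-oriented equivalence class so that its classifying map $\Aa^1_k \to \GR_n$ becomes constant at the basepoint, in a way that preserves the already-trivial $\SL$-orientations at the two endpoints; the modified homotopy then lies in the image of $\varepsilon$. Producing this rel.\ endpoints deformation is the delicate step: it requires a controlled nullhomotopy in the $\Aa^1$-homotopy type of $\GR_n$ compatible with a prescribed finite-type datum on the support $Z$, and this is again where resolution of singularities and Neshitov-style moving techniques re-enter, pinning down the use of $\chr k = 0$. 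Once bijectivity is established weight by weight, the ring map structure shown in the first paragraph upgrades it to an isomorphism of graded rings.
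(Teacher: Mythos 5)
Your surjectivity outline matches the paper's strategy: one deforms an $\SL$-oriented framing by explicit $\Aa^1$-homotopies until it lands in the fiber of $\TAU_n$ over the distinguished point of $\GR_n$. The paper implements your two ``moves'' concretely --- first using the transitive action of $\SL(L)$ on $\Gr_{n,L}$ to bring the image of the (finite) support into the fiber of $\GR_n \to \Gr_n$ over the distinguished point, then contracting the framing into a single fiber via a local trivialization, and finally using $\Aa^1$-chain connectedness of $\GR(n,n+1) \simeq \Aa^{n+1}-0$ to move that fiber to the one over the basepoint; no resolution of singularities is needed for this half. Up to these details your third paragraph is sound.

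The genuine gap is in injectivity. You correctly isolate the hard point --- given an $\SL$-oriented homotopy $H$ between $\varepsilon(c)$ and $\varepsilon(c')$, one would need to deform $H$ \emph{rel its endpoints} into the image of $\varepsilon$ --- but you do not supply an argument, and there is no evident way to carry out such a relative deformation: the moving techniques that work for surjectivity (the $\SL(L)$-action, chain connectedness of $\Aa^{n+1}-0$) operate on a single correspondence and do not preserve a prescribed boundary condition at $0$ and $1$, and appealing to ``resolution of singularities and Neshitov-style moving techniques'' does not by itself produce the required controlled nullhomotopy. The paper avoids this problem entirely by constructing a left inverse to $\varepsilon_*$: the functor $\alpha \colon \Fr_*(k) \to \Cor_k$ to finite Milnor--Witt correspondences (pull back the oriented Thom class of the trivial rank-$n$ bundle along the framing, then push forward to $X \times Y$) extends to a functor $\alpha^{\SL} \colon \Fr^{\SL}_*(k) \to \Cor_k$ by replacing that Thom class with the oriented Thom class of $\TAU(n,N) \to \GR(n,N)$, which lives in an \emph{untwisted} Chow--Witt group precisely because $\det \TAU(n,N)$ is canonically trivialized over the oriented Grassmannian. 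By construction $\alpha^{\SL}_* \circ \varepsilon_* = \alpha_*$, and $\alpha_*$ is an isomorphism because it intertwines Neshitov's isomorphism $H_0(\ZF(\del,\Gg_m^{\wedge *})) \simeq \K_{\geqslant 0}^{MW}(k)$ with the Calm\`es--Fasel isomorphism $H_0(\Cor(\del,\Gg_m^{\wedge *})) \simeq \K_{\geqslant 0}^{MW}(k)$, as one checks on the generators $\langle a \rangle$ and $[a]$. This is where $\chr k = 0$ actually enters. Without this left-inverse construction (or a genuine substitute for the relative deformation), your proof of injectivity is incomplete.
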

The surjectivity of $\varepsilon_*$ is proven by providing explicit $\Aa^1$-homotopies between framed correspondences, which allow us to deform an $\SL$-oriented framing so that its image is contained in the fiber over the distinguished point of $\GR_n$. 

To prove injectivity of $\varepsilon_*$, we employ the category $\Cor_k$ of finite Milnor-Witt correspondences of Calm\`es-Fasel~\cite{CFFiniteMWCor}.  This category has smooth $k$-schemes as objects, and a morphism from $X$ to $Y$ is, roughly speaking, given by a closed subscheme $Z \subset X \times Y$, finite and surjective over components of $X$, with an unramified quadratic form on $Z$. 

There is a functor $\alpha \colon \Fr_*(k) \to \Cor_k$, defined in~\cite[Proposition~2.1.12]{DF_MWMotComplexes}. We show that Neshitov's isomorphism $$H_0(\ZF(\Delta^{\bullet}_k, \Gg_m^{\wedge *})) \xrightarrow{\sim} \K_*^{MW}(k)$$ factors via $\alpha$ through the isomorphism $H_0(\Cor(\Delta^{\bullet}_k, \Gg_m^{\wedge *})) \xrightarrow{\sim} \K_*^{MW}(k)$, constructed in~\cite[Theorem~2.9]{CFComparisonMWCohom}. The functor $\alpha$ can be reinterpreted as follows: given a framed correspondence in $\Fr_n(X, Y)$, one considers the oriented Thom class of the trivial bundle of rank $n$ over $\spk$ (which is an element of $H_0^n(\Aa^n_k, \Kk_n^{MW})$), takes its pullback along the framing, and then applies the pushforward to $X \times Y$. Such functor is naturally extended to the category $\Fr^{\SL}_*(k)$, by applying the same procedure to the oriented Thom class of the tautological bundle over the oriented Grassmanian. Altogether, this allows us to define a left inverse map for $\varepsilon_*$.

From Theorem~\ref{theo: compare ZF and ZF^SL} we obtain the following straightforward corollaries.

\begin{co}[see Proposition~\ref{prop:homotopy module}]
\label{co: homo module}
Assume that $\chr k = 0$. Then the unit map 
$e \colon \unit_k \to \msl$ induces an isomorphism of the corresponding homotopy modules:
\begin{equation*}
e_* \colon \underline{\pi}_0(\unit_k)_* \xrightarrow{\sim} \underline{\pi}_0(\msl)_*.
\end{equation*}
\end{co}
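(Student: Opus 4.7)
The plan is to bootstrap from Proposition~\ref{pro: unit map msl} and Theorem~\ref{theo: compare ZF and ZF^SL}. Combined, these two results immediately yield that $e_* \colon \pi_0(\unit_k)_l(k) \to \pi_0(\msl)_l(k)$ is an isomorphism for every $l \geqslant 0$, but only on $k$-sections. I will upgrade this to a sheaf isomorphism by running the same argument over arbitrary finitely generated field extensions $L/k$, and then extend from non-negative weights to all weights using the homotopy module structure.

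First, to promote the isomorphism on $k$-sections to an isomorphism of Nisnevich sheaves, I would invoke the standard fact (due to Morel) that a morphism between homotopy modules is an isomorphism if and only if it induces an isomorphism on sections over every finitely generated field extension $L/k$. Since $\chr k = 0$, every such $L$ is automatically perfect and of characteristic $0$, so both Proposition~\ref{pro: unit map msl} and Theorem~\ref{theo: compare ZF and ZF^SL} apply with $L$ in place of $k$. Provided base change is compatible with the constructions involved---in particular $(\msl)_L \simeq \msl_L$ and the identification of $e_*$ with $\varepsilon_*$ is preserved under pullback along $\spl \to \spk$---this produces an isomorphism on $L$-sections for every $l \geqslant 0$, and hence an isomorphism of sheaves $\underline{\pi}_0(\unit_k)_l \xrightarrow{\sim} \underline{\pi}_0(\msl)_l$ for all $l \geqslant 0$.

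To extend from $l \geqslant 0$ to all $l \in \Z$, I would use the defining property of a homotopy module: the contraction $(\underline{\pi}_0(\E)_l)_{-1}$ is canonically isomorphic to $\underline{\pi}_0(\E)_{l-1}$. Iterating this identification, the sheaves $\underline{\pi}_0(\E)_l$ for $l < 0$ are determined by those for $l \geqslant 0$, and since contraction preserves isomorphisms, the isomorphism in non-negative weights propagates to all integer $l$.

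The main obstacle---more a technical verification than a genuine conceptual difficulty---is checking that the identification supplied by Proposition~\ref{pro: unit map msl} is compatible with base change $\spk \to \spl$. Because that identification passes through the infinite $\Pp^1$-loop space computations from~\cite{deloop3}, one has to ensure these computations commute with base change to field extensions. This is expected from the general framework of~\cite{deloop3}, but it is the one spot where care is warranted.
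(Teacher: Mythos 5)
Your proposal follows essentially the same route as the paper: apply the identification of $e_*$ with $\varepsilon_*$ and the main theorem over every finitely generated field extension $L/k$, conclude the sheaf-level isomorphism from the fact that strictly $\Aa^1$-invariant sheaves are unramified (equivalently, detected on fields), and propagate to negative weights via the contraction isomorphisms. The base-change point you rightly flag is resolved in the paper by noting that $\unit$ and $\msl$ are absolute spectra and invoking continuity of $\SH$ along the cofiltered limit $\spl = \lim_\alpha X_\alpha$, which identifies $[\mathds{1}_L, \Sigma_{\Gg_m}^l \msl_L]_{\SHh(L)}$ with the stalk $\underline{\pi}_0(\msl_k)_l(L)$.
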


The spectrum $\msl$ represents a cohomology theory with a special linear orientation, and as such has a universal  property~\cite[Theorem~5.9]{PW-MSLMSp}. In particular, a map of commutative monoids $\msl \to A$ in the homotopy category $\SHh(k)$ induces a special linear orientation of the cohomology theory $A^{*, *}$. Thus Corollary~\ref{co: homo module} immediately implies the following well-known fact.

\begin{co}[see Corollaries~\ref{cor:chow witt orientation} and~\ref{cor:mw-cohom}] \label{co: sl-orientation}
Assume that $\chr k = 0$. Then the Chow-Witt groups $H^*(-, \Kk^{MW}_*)$ and the Milnor-Witt motivic cohomology $H^{*, *}_{MW} (-, \Z)$ as ring cohomology theories acquire unique special linear orientations. 
\end{co}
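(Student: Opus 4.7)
The plan is to invoke the universal property of $\msl$ recalled just above: a morphism of commutative monoids $\msl \to A$ in $\SHh(k)$ equips the ring cohomology theory $A^{*,*}$ with a special linear orientation. It therefore suffices, for each of the two cohomology theories in question, to produce a canonical morphism of commutative monoids from $\msl$ to its representing spectrum and then verify uniqueness. The crucial observation is that the ring structure of both theories is governed by the Eilenberg--MacLane spectrum $\hz$ of the homotopy module $\Kk^{MW}_*$, so the main task is to construct a commutative monoid map $\msl \to \hz$.

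To produce this map, I would use that $\msl$ is connective for Morel's homotopy $t$-structure on $\SHh(k)$. Its truncation $\tau_{\leq 0}\msl$ therefore lies in the heart and is canonically identified with the Eilenberg--MacLane spectrum of $\underline{\pi}_0(\msl)_*$. By Corollary~\ref{co: homo module}, the unit map induces an isomorphism $\underline{\pi}_0(\msl)_* \cong \Kk^{MW}_*$, so $\tau_{\leq 0}\msl \simeq \hz$. The truncation morphism
\[
\msl \longrightarrow \tau_{\leq 0}\msl \simeq \hz
\]
inherits the structure of a morphism of commutative monoids because truncation on connective objects is lax symmetric monoidal. Postcomposing with the canonical maps from $\hz$ into the spectra representing Chow--Witt cohomology, respectively Milnor--Witt motivic cohomology, would deliver the desired orientations.

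For uniqueness, I would argue that any commutative monoid map $\msl \to \hz$ factors through $\tau_{\leq 0}\msl$ (as the target lies in the heart) and is therefore determined by the induced map on $\underline{\pi}_0$. Under the identifications above, this induced map is a morphism of graded commutative rings $\Kk^{MW}_* \to \Kk^{MW}_*$ which sends $1$ to $1$, and hence must be the identity. The step I expect to need the most care is checking that the representing spectra of the two cohomology theories carry ring structures compatible with that of $\hz$, and that the truncation honestly lifts to a morphism of commutative monoids in $\SHh(k)$; granted Corollary~\ref{co: homo module}, the remainder of the argument is formal.
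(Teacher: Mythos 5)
Your treatment of the Chow--Witt half is essentially the paper's argument for Corollary~\ref{cor:chow witt orientation}: truncate $\msl$ in Morel's homotopy $t$-structure to land in the heart, identify $\tau_{\leqslant 0}\msl \simeq \Hrm\,\underline{\pi}_0(\msl)_*$ with $\Hrm\,\underline{\pi}_0(\unit)_* $ via Corollary~\ref{co: homo module}, and get uniqueness because every monoid map from $\msl$ to an object of the heart factors through this truncation and is pinned down on $\underline{\pi}_0$. That part is fine.

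The gap is in the Milnor--Witt motivic cohomology half, and it comes from conflating two different spectra. The spectrum $\hz$ representing MW-motivic cohomology is \emph{not} the Eilenberg--MacLane spectrum of the homotopy module $\Kk^{MW}_*$: by~\cite[Theorem~5.2]{BFEffectivity} it is $\tau^{\eff}_{\leqslant 0}(\unit)$, the coconnective truncation of the sphere in the \emph{effective} homotopy $t$-structure, and it does not lie in the heart of Morel's homotopy $t$-structure --- its heart truncation is precisely $\Hrm\,\underline{\pi}_0(\unit)_*$, so if the two spectra agreed, MW-motivic cohomology would coincide with Chow--Witt cohomology in all bidegrees, which it does not. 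Consequently your plan to ``postcompose with the canonical map from $\hz$ into the spectrum representing Milnor--Witt motivic cohomology'' cannot be carried out: the canonical comparison map runs the other way, from the MW-motivic spectrum down to the Eilenberg--MacLane spectrum of its $\underline{\pi}_0$, and there is no map in the direction you need. The paper's Corollary~\ref{cor:mw-cohom} instead reruns your truncation argument in the effective $t$-structure: since $\msl$ is effective one has $\msl \to \tau^{\eff}_{\leqslant 0}(\msl)$, and one must then prove separately that $e_* \colon \tau^{\eff}_{\leqslant 0}(\unit) \to \tau^{\eff}_{\leqslant 0}(\msl)$ is an equivalence. This uses Bachmann's criterion~\cite[Proposition~4.(1)]{BachmannSlices}: it suffices to check $\underline{\pi}_*(-)_0$, and since both spectra are connective only $\underline{\pi}_0(-)_0$ survives the truncation, where Corollary~\ref{co: homo module} applies. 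The uniqueness argument for this case must likewise be run with respect to the effective $t$-structure, since the target is not in the heart of the homotopy $t$-structure.
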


\begin{re}
There is a recent work by Druzhinin and Kylling, currently in the status of a preprint, which extends the result of Neshitov (\textit{op. cit.}) to perfect fields $k$ of ${\chr k \ne 2}$~\cite[Sections 4, 5]{DKFiniteFields}. This result would imply that Theorem~\ref{theo: compare ZF and ZF^SL} also holds for such fields. Corollaries~\ref{co: homo module} and~\ref{co: sl-orientation} would hold over perfect fields of $\chr k>2$ as well, after inverting the characteristic of $k$. 
\end{re}

\subsection*{Notation}
Throughout the paper, $k$ is a perfect field. $\smk$ is the category of smooth separated schemes of finite type over $k$.
$\del$ is the standard cosimplicial object $n \mapsto \Delta^n_k$, where $\Delta^n_k = \spk[t_0, \dots, t_n]/(\sum_{i=0}^n t_i - 1)$ is the algebraic $n$-simplex. We write 
$\Aa^1 = \Aa^1_k$ and $\Pp^1 = \Pp^1_k$ when the field $k$ is fixed, and $\Gm = (\Aa^1-0, 1)$, $\Pp^1 = (\Pp^1, \infty) $ for pointed $k$-schemes. We denote by $z \colon X \hookrightarrow E$ the zero section of a vector bundle $E \to X$. We write $\Th_X(E) = E / E-z(X)$ for the Thom space of a vector bundle $E$ over a smooth scheme $X$. In particular, $T = \Aa^1 / \Aa^1-0$.

For an $\infty$-category $\C$, $\Maps_{\C}(x, y)$ denotes the space of morphisms from $x$ to $y$, and $[x, y]_{h\C} = \pi_0 \Maps_{\C}(x, y) $ denotes the set of morphisms in the homotopy category $h\C$. We denote by $\PSh(\C)$ the $\infty$-category of presheaves of spaces on $\C$.

We write $\Loc_{\Nis} \colon \PSh(\smk) \to \PSh_{\Nis}(\smk)$ for the left adjoint to the inclusion functor of Nisnevich sheaves, i.e. for the  Nisnevich sheafification. We write $\Loc_{\Aa^1} \colon \PSh(\smk) \to \PSh_{\Aa^1}(\smk)$ for the left adjoint to the inclusion functor of $\Aa^1$-invariant presheaves, i.e. for the so called (naive) $\Aa^1$-localization. It can be modelled as $(\Loc_{\Aa^1} P)(X) = \col_{n \in \Delta^{\op}} P(X \times \Delta^n_k).$

We denote by $\SH(k)$ the \textit{motivic stable homotopy $\infty$-category} of  $k$. $\SH(k)$ is constructed as the $\infty$-category of $\Pp^1$-spectra in pointed $\Aa^1$-invariant Nisnevich sheaves on $\smk$. $(\SH(k), \otimes)$ is a symmetric monoidal $\infty$-category under smash product, with the unit given by $\unit = \sus S^0_k$ (see~\cite[Section~4.1]{BHNorms}). We denote by $\SHh(k)$ the  homotopy category of $\SH(k)$. 

We denote by $\underline{\pi}_n(\E)_m$ the Nisnevich sheafification of the presheaf on $\smk$
\[\pi_n(\E)_m \colon U \mapsto 
[\Sigma_{S^1}^n \sus U_+, \Sigma_{\Gm}^m \E]_{\SHh(k)}\] for $\E \in \SH(k)$. Its value is naturally extended to essentially smooth $k$-schemes. 
We abbreviate $\pi_{n}(\E)_m(L) = \pi_n(\E)_m(\spl)$ for $\E \in \SH(k)$, $L/k$ a finitely generated field extension.

\subsection*{Acknowledgments}
The author would like to thank sincerely Alexey Ananyevskiy, Federico Binda, Elden Elmanto, Jean Fasel, Marc Hoyois, Adeel Khan, Lorenzo Mantovani, Alexander Neshitov, and Vova Sosnilo for helpful discussions. The author is very grateful to Tom Bachmann for careful reading of a draft of this paper. This work is part of author's PhD thesis under supervision of Marc Levine, and it could not be accomplished without his encouragement and inspiration, provided on daily basis.

\section{$E$-framed correspondences}

In this section we recall the defintion of an $E$-framed correspondence from~\cite[Section~2.2]{deloop3}\footnote{In~\textit{op. cit.} there was defined a stabilized version of $E$-framed correspondences, in a bigger generality, and they were called ``twisted equationally framed correspondences".}, which generalizes Voevodsky's original definition of a framed correspondence~\cite{VoevodskyNotesFrames}.
We recall functoriality properties of $E$-framed correspondences and related notions, generalizing the properties of framed correspondences studied in~\cite{GPFramedMotives}. Afterwards we recall from~\cite{deloop3} the computation of infinite $\Pp^1$-loop spaces of certain motivic Thom spectra via $E$-framed correspondences. 

\subsection{Main definitions and functoriality}

\begin{df}
\label{def: E-framed cor}
Let $X, Y$ be smooth $k$-schemes and $E$ a vector bundle over $Y$ of rank $r$. An \textit{$E$-framed correspondence}  $c=(U, \phi, g)$ of level $n \in \N$ from $X$ to $Y$ consists of the following data:
\begin{itemize}
\item a closed subscheme $Z \subset \Aa^{n+r}_X$, finite over $X$;
\item an \'etale neighborhood $p \colon U \to \Aa^{n+r}_X$ of $Z$;
\item a morphism $(\phi, g) \colon U \to \Aa^n \times E$ such that $Z$ as a closed subscheme of $U$ is the preimage of the zero section 
$z(0 \times Y) \subset \Aa^n \times E $.
\end{itemize}

We say that $E$-framed correspondences $(U, \phi, g)$ and  $(U', \phi', g')$  are equivalent if $Z = Z'$ and  $(\phi, g)$ coincides with $(\phi', g')$ in an \'etale neighborhood of $Z$ refining both $U$ and $U'$. 
We denote the set of $E$-framed correspondences modulo this equivalence relation as $\Fr_{E, n}(X, Y) $; in the case $E=Y$ we write $\Fr_{n}(X, Y)$. We call $Z$ the \textit{support} of $c$ and $\phi$ the \textit{framing} of $Z$.
\end{df}

\begin{rem}
When $E$ is a trivial bundle over $Y$ of rank $r$ this definition recovers the set of framed correspondences $\Fr_{n+r}(X, Y)$, introduced by Voevodsky in~\cite{VoevodskyNotesFrames} and later studied by Garkusha and Panin in~\cite{GPFramedMotives} (see also~\cite[Section~2.1]{deloop1}). 
\end{rem}

\sssec{}
\label{sssec:functor. of E-cor}
One can compose $E$-framed correspondences in the following way:
\begin{gather*}
\Fr_n(X, V) \times \Fr_{E, m}(V, Y) \longrightarrow \Fr_{E, n+m}(X, Y) \\
\big( (U, \phi, g), (W, \psi, h) \big) \mapsto (U \times_V W, \phi \times \psi, h \circ \pr_{W}). 
\end{gather*}

One can also compose with endomorphisms:
\begin{gather*}
\Fr_{E, n}(X, Y) \times \Fr_m(Y, Y)   \longrightarrow \Fr_{E, n+m}(X, Y) \\
\big( (W, \psi, h), (U, \phi, g) \big) \mapsto (W \times_Y U, \psi \times \phi, g \circ \pr_U), 
\end{gather*}
where $W \to Y$ is defined as $W \xrightarrow{h} E \to Y $.

\sssec{}
\label{sssec: external product}
The product of $E$-framed correspondences is defined as follows:
\begin{gather*}
\boxtimes \colon \Fr_{E,n}(X, Y) \times \Fr_{E',m}(X', Y') \longrightarrow \Fr_{E  \times E',n+m}(X \times X', Y \times Y') \\
 ((U, \phi, g), (U', \phi', g')) \mapsto  (U \times U', (\phi \circ \pr_U, \phi' \circ \pr_{U'}), g \times g').
\end{gather*}

\sssec{}
Recall from~\cite[Definition~2.3]{GPFramedMotives} the \textit{category of framed correspondences} $\Fr_*(k)$ which has smooth $k$-schemes as objects, and morphisms are given by 
$\Fr_*(X, Y) = \bigvee_{i=0}^{\infty} \Fr_i(X, Y),$
where each set $ \Fr_i(X, Y) $ is pointed by the correspondence with empty support $0_i \in  \Fr_i(X, Y)$. 
There is a canonical functor: 
$\gamma \colon \smk \to \Fr_*(k),$
which sends $f \colon X \to Y$ to the framed correspondence $(X, \const, f) \in \Fr_0(X, Y)$. By abuse of notation, we will consider morphisms of $k$-schemes as framed correspondences of level $0$.

For $X \in \smk$ consider the \textit{suspension morphism} $\sigma_X = (\Aa^1 \times X, \pr_{\Aa^1}, \pr_X) \in \Fr_1(X, X).$
The set of stabilized $E$-framed correspondences is given by 
\[\Fr_E(X, Y) = \col(\Fr_{E, 0}(X, Y) \xrightarrow{\sigma_Y} \Fr_{E, 1}(X, Y) \to \dots).\]

The original motivation for the definition of an $E$-framed correspondence comes from the following lemma, attributed to Voevodsky. 

\begin{lm}[Voevodsky]
\label{lm:voev thom}
Let $X$, $Y$ be smooth $k$-schemes, $E$ a vector bundle over $Y$ of rank $r$. Then there is a natural bijection:
$$\Theta_{E, n} \colon \Fr_{E, n}(X, Y) \xrightarrow{\sim} \Hom_{\PSh_{\Nis}(\smk)_{\bullet}} \big((\Pp^1, \infty)^{\wedge r+n} \wedge X_+, \Loc_{\Nis}(T^n \wedge \Th_Y(E))\big).$$
\end{lm}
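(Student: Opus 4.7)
The plan is to build $\Theta_{E,n}$ and its inverse via Nisnevich excision combined with the fact that any closed subscheme $Z \subset \Aa^{n+r}_X$ finite over $X$ is automatically proper over $X$, and hence closed in $(\Pp^1)^{n+r}_X$ (the latter being separated over $X$). The crucial geometric input is the following chain of Nisnevich-local equivalences, valid for any \'etale neighborhood $p\colon U \to \Aa^{n+r}_X$ of $Z$:
\begin{equation*}
\Loc_{\Nis}\bigl(U/(U-Z)\bigr) \xrightarrow{\sim} \Loc_{\Nis}\bigl(\Aa^{n+r}_X/(\Aa^{n+r}_X - Z)\bigr) \xrightarrow{\sim} \Loc_{\Nis}\bigl((\Pp^1)^{n+r}_X/((\Pp^1)^{n+r}_X - Z)\bigr),
\end{equation*}
together with the collapse map
\begin{equation*}
(\Pp^1, \infty)^{\wedge r+n} \wedge X_+ \longrightarrow (\Pp^1)^{n+r}_X/((\Pp^1)^{n+r}_X - Z),
\end{equation*}
induced by the inclusion $(\Pp^1)^{n+r}_X - \Aa^{n+r}_X \subset (\Pp^1)^{n+r}_X - Z$. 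Both equivalences are instances of Nisnevich excision. On the target side, the standard smash-product formula identifies $T^n \wedge \Th_Y(E)$ with the pointed presheaf $(\Aa^n \times E)/(\Aa^n \times E - 0 \times z(Y))$.

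For the forward direction, given $(U, \phi, g) \in \Fr_{E,n}(X, Y)$ with support $Z$, the map $(\phi, g)\colon U \to \Aa^n \times E$ satisfies $(\phi, g)^{-1}(0 \times z(Y)) = Z$ and therefore descends to a pointed presheaf map $U/(U-Z) \to T^n \wedge \Th_Y(E)$. Composing with the unit of sheafification and with the zig-zag above produces the desired morphism $(\Pp^1, \infty)^{\wedge r+n} \wedge X_+ \to \Loc_{\Nis}(T^n \wedge \Th_Y(E))$. The empty correspondence maps to the basepoint, and since the output depends on $U$ only up to \'etale refinement around $Z$, the assignment factors through $\Fr_{E, n}(X, Y)$ and is manifestly natural in $X$ and $Y$.

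For the inverse, given a pointed Nisnevich sheaf map $f$, one must read off the triple $(U, \phi, g)$ from the local behaviour of $f$. The support $Z \subseteq \Aa^{n+r}_X$ is defined as the smallest closed subscheme through which $f$ factors via the collapse onto $(\Pp^1)^{n+r}_X/((\Pp^1)^{n+r}_X - Z)$; its existence uses the vanishing of $f$ on the axis at infinity (forcing $Z \subset \Aa^{n+r}_X$) together with a properness argument (forcing $Z$ to be proper, and hence, being contained in an affine space over $X$, finite over $X$). Working Nisnevich-locally around $Z$, the factored map into $\Loc_{\Nis}(T^n \wedge \Th_Y(E))$ lifts to a presheaf map into $T^n \wedge \Th_Y(E)$, which in turn lifts to $\Aa^n \times E$ on a suitable \'etale neighborhood $U$ of $Z$; crucially, because $Z$ is finite over $X$, any Nisnevich cover of $\Aa^{n+r}_X$ over which $f$ is representable can be refined to a single \'etale neighborhood of $Z$, producing $(\phi, g)\colon U \to \Aa^n \times E$. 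This reverses the forward construction on the nose, and the main obstacle --- where essentially all the technical work resides --- is precisely this passage from an abstract Nisnevich sheaf map to an \'etale-local representative, which depends in an essential way on the finiteness of $Z$ over $X$.
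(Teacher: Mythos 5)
Your construction of $\Theta_{E,n}$ via Nisnevich/Zariski excision plus the collapse map is exactly the paper's, which packages the same excision as the pushout presentation $(\Pp^1)^{\times(n+r)}\times X = \Loc_{\Nis}\big(((\Pp^1)^{\times(n+r)}\times X - Z)\sqcup_{U-Z}U\big)$, and your sketch of the inverse correctly isolates the one nontrivial input: that a section of the Nisnevich sheafification whose support $Z$ is finite over $X$ is represented on a single \'etale neighborhood of $Z$. The paper proves neither direction itself but defers to the cited Corollary~A.1.5 of the reference, which is precisely where that representability lemma is established, so your proposal follows the intended argument (with the same step left as the technical core).
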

\begin{proof}
This is a particular case of~\cite[Corollary~A.1.5]{deloop1}. The map $\Theta_{E, n}$ is constructed as follows. Let $c = (U, \phi, g) \in  \Fr_{E, n}(X, Y)$ have support $Z$, and $p \colon U \to \Aa^{n+r}_X$  be the  \'etale neighborhood of $Z$. Then the map $\Theta_{E, n}(c)$ is induced by the map of Nisnevich sheaves 
\[(\Pp^1)^{\times (n+r)} \times X = \Loc_{\Nis} \big(((\Pp^1)^{\times (n+r)} \times X - Z) \sqcup_{U-Z} U \big) \xrightarrow{\const \sqcup \overline{(\phi, g)}} \Loc_{\Nis} \big(\Aa^n \times E / (\Aa^n - 0) \times (E-Y) \big),\]
where $U \to (\Pp^1)^{\times (n+r)} \times X$ is defined by composing $p$ with the embeddings at the complement of infinity, and $\const$ is the constant map to the distinguished point.
\end{proof}

Under the bijection of Lemma~\ref{lm:voev thom}, the suspension morphism $\sigma_{\spk}$ corresponds to the canonical motivic equivalence of pointed Nisnevich sheaves $(\Pp^1, \infty) \xrightarrow{\sim} \Pp^1 / \Pp^1-0 \simeq T$. Hence we get an induced map
\begin{equation}
\label{eq:thetaE}
\Theta_E \colon \Fr_E(X, Y) \longrightarrow  \Maps_{\SH(k)} (\Sigma_{\Pp^1}^r \Sigma^{\infty}_{\Pp^1} X_+, \Sigma^{\infty}_T \Th_Y(E)),
\end{equation}
functorial in $X$.

\subsection{Infinite loop spaces of motivic Thom spectra}

~

\sssec{} \label{sssec:functor clopen}
$E$-framed correspondences have the following functoriality with respect to vector bundles.
Assume $f \colon E \to E' $ is a map of rank $r$ vector bundles over smooth $k$-schemes $Y$, $Y'$ respectively, which is injective on each fiber, i.e. the canonical morphism 
$z(Y) \to E \times_{E'} {z(Y')}$ is an isomorphism. Then $f$ induces the maps
$f_{*, n} \colon \Fr_{E, n}(-, Y) \longrightarrow \Fr_{E', n}(-, Y')$
and
$f_* \colon \Fr_{E}(-, Y) \to \Fr_{E'}(-, Y').$

We will need an extension of this functoriality to the category $\smk_+$, the full subcategory of smooth pointed $k$-schemes of the form $X_+$. Equivalently, $\smk_+$ is the category whose objects are smooth $k$-schemes and whose morphisms are partially defined maps with clopen domains. Let $f \colon E \dashrightarrow E'$ be a partially-defined map with a clopen domain, that is, $f \colon B \to E'$ where $E = B \sqcup B^c $. Assume that restriction to the zero section gives a map $ f\big|_{z(Y)} \colon A \to Y'$ where $z(Y) = A \sqcup A^c $, and that $A = B \times_{E'} z(Y') $. Then $f$ induces a map:
\begin{gather*}
f_{*, n}(X) \colon \Fr_{E, n}(X, Y) \longrightarrow \Fr_{E', n}(X, Y') \\
(U, \phi, g) \mapsto \big( g^{-1}(B), \phi\big|_{g^{-1}(B)}, f \circ g\big|_{g^{-1}(B)} \big),
\end{gather*}
functorial in $X \in \smk$, which gives $f_* \colon \Fr_{E}(-, Y) \to \Fr_{E'}(-, Y')$ after stabilization.

\sssec{}
This way we can define a structure of a $\Fin_*$-object on the presheaf $\Fr_E(-, Y)$. The category $\Fin_*$ of pointed finite sets is equivalent to the category with objects $\langle n \rangle = \{1, \dots, n \}$ for $n \geqslant 0$ and partially-defined maps. The functor \[F \colon \Fin_* \to \PSh(\smk); \quad
\langle n \rangle \mapsto \Fr_{E^{\sqcup n}}(-, Y^{\sqcup n})\]
is constructed as follows. Let $a \colon \langle n \rangle \dashrightarrow \langle m \rangle $ be a partially-defined map. The map $a$ induces a partially-defined map $\hat{a} \colon E^{\sqcup n} \dashrightarrow E^{\sqcup m} $ with a clopen domain, satisfying the requirements of the construction in Section~\ref{sssec:functor clopen}. We set  $F(a) = \hat{a}_*$.

The following form of additivity holds for $E$-framed correspondences.
\begin{prop}
\label{prop:E-additivity}
Let $Y_1, \dots, Y_m$ be smooth $k$-schemes, and let $E_1, \dots, E_m$ be vector bundles of rank $r$ over $Y_1, \dots, Y_m$ respectively. Then the canonical map 
\[\alpha \colon  \Fr_{E_1 \sqcup \dots \sqcup E_m}(-, Y_1 \sqcup \dots \sqcup Y_m) \to \Fr_{E_1}(-, Y_1) \times \dots \times \Fr_{E_m}(-, Y_m)\]
is an $\Aa^1$-equivalence, i.e. $\Loc_{\Aa^1}(\alpha)$ is an equivalence. In particular, for every  $Y \in \smk$ and a vector bundle $E$ over $Y$ the presheaf of spaces $\Loc_{\Aa^1} \Fr_E(-, Y)$ is an  $\Ee_{\infty}$-monoid in $\PSh(\smk)$.
\end{prop}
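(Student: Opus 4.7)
The plan is to adapt the additivity argument of Garkusha--Panin~\cite{GPFramedMotives} for ordinary framed correspondences to the $E$-framed setting, and then extract the $\Ee_{\infty}$-structure from it as a formal consequence. First I would give an explicit description of $\alpha$: for an $(E_1 \sqcup \dots \sqcup E_m)$-framed correspondence $(U, \phi, g)$ with support $Z$, the restriction $g|_Z$ lands in the zero section $Y_1 \sqcup \dots \sqcup Y_m$, and since the $Y_i$ are pairwise clopen in $Y_1 \sqcup \dots \sqcup Y_m$, the support decomposes canonically as $Z = Z_1 \sqcup \dots \sqcup Z_m$ with $Z_i = (g|_Z)^{-1}(Y_i)$. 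After shrinking $U$ to a disjoint union of \'etale neighborhoods $U_1 \sqcup \dots \sqcup U_m$ of the $Z_i$ (which exists since the $Z_i$ are disjoint closed subsets of $U$ finite over $X$), the map $\alpha$ simply restricts $(\phi, g)$ componentwise to produce the tuple $\big((U_i, \phi|_{U_i}, g|_{U_i})\big)_{i=1}^m$.

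For the inverse direction, I would build a candidate inverse $\beta$ by a translation trick on stabilized correspondences. Given correspondences $c_i = (U_i, \phi_i, g_i) \in \Fr_{E_i, n}(X, Y_i)$ of a common level $n$, first suspend each $c_i$ once in the sense of Section~2.2.4 so that it lives on $\Aa^{n+r+1}_X$ with an additional affine coordinate $t$. Then precompose the $i$-th suspended correspondence with the translation $t \mapsto t - (i-1)$ in that coordinate. Because each support $Z_i$ is finite over $X$, the translated supports become pairwise disjoint closed subsets of $\Aa^{n+r+1}_X$; hence their disjoint union admits a common \'etale neighborhood carrying a well-defined $(E_1 \sqcup \dots \sqcup E_m)$-framing from $X$ to $Y_1 \sqcup \dots \sqcup Y_m$ of level $n+1$. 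Setting $\beta(c_1, \dots, c_m)$ equal to this correspondence provides the candidate inverse on stabilizations.

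The key verification step is then to show that $\alpha \circ \beta$ and $\beta \circ \alpha$ are $\Aa^1$-homotopic to the identity. The composite $\alpha \circ \beta$ is the identity up to one suspension $\sigma_Y$, which is an $\Aa^1$-equivalence on $\Fr_E(-, Y)$. For $\beta \circ \alpha$, the disjoint union of translated restrictions can be compared to the original correspondence by linearly deforming the translations back to zero; concretely, the $\Aa^1$-family $t \mapsto t - s(i-1)$ for $s \in \Aa^1$ yields an explicit $\Aa^1$-homotopy of correspondences. I expect the main obstacle to be checking that, during each step of this homotopy, the supports remain finite over $X$ and the \'etale neighborhoods can be coherently refined; this requires a careful use of the fact that after enough suspensions one can always separate finitely many finite supports by translations in a single auxiliary coordinate, and that such separations vary continuously in $\Aa^1$.

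Once additivity is established, the second assertion is formal. Specializing to $Y_1 = \dots = Y_m = Y$ and $E_1 = \dots = E_m = E$, the map $\alpha$ becomes the $m$-th Segal map of the $\Fin_*$-diagram $F \colon \langle n \rangle \mapsto \Fr_{E^{\sqcup n}}(-, Y^{\sqcup n})$ constructed in Section~\ref{sssec:functor clopen}. Since the Segal maps are $\Aa^1$-equivalences, $\Loc_{\Aa^1} F$ satisfies the Segal condition in $\PSh(\smk)$, so $\Loc_{\Aa^1} \Fr_E(-, Y) = \Loc_{\Aa^1} F(\langle 1 \rangle)$ inherits a canonical $\Ee_{\infty}$-monoid structure.
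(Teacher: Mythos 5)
Your argument is correct and is essentially the paper's own proof (which defers to \cite[Proposition~2.3.6]{deloop3} and \cite[Proposition~2.2.11]{deloop1}): your candidate inverse $\beta$ is exactly the paper's suspension-plus-translation map $(c_1, c_2) \mapsto (\Aa^1_{U_1} \sqcup \Aa^1_{U_2}, \, \phi_1 \times t_1 \sqcup \phi_2 \times (t_2 - 1), \, \dots)$, and the linear $\Aa^1$-homotopies deforming the translations back to zero, together with reading off the $\Ee_{\infty}$-structure from the Segal condition for the $\Fin_*$-diagram $F$, are precisely the verifications the cited proof carries out.
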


\begin{proof}
This is~\cite[Proposition~2.3.6]{deloop3}, the proof is the same as for the case $E_i$ trivial of rank $0$ for all $i$, which was proven in~\cite[Proposition~2.2.11]{deloop1}. The map $\beta$, inverse up to $\Aa^1$-homotopy to $\alpha$, is constructed as follows. Assume $m=2$. Define
\begin{gather*}
\beta_n(X) \colon \Fr_{E_1, n}(X, Y_1)  \times \Fr_{E_2, n}(X, Y_2) \longrightarrow \Fr_{E_1 \sqcup   E_2, n+1}(X, Y_1 \sqcup  Y_2) \\
(U, \phi, g) \times (W, \psi, h) \mapsto 
(\Aa^1_U \sqcup \Aa^1_W, \phi \times t_1 \sqcup \psi \times (t_2-1), g \circ \pr_U \sqcup \, h \circ \pr_V),
\end{gather*}
where $t_1$ and $t_2$ are the coordinate functions on each copy of $\Aa^1$. The proof in~\cite[Proposition~2.2.11]{deloop1} shows that $\col_i \Loc_{\Aa^1} \beta_{2i} $ is inverse to $\Loc_{\Aa^1} \alpha$.
\end{proof}

\sssec{}
One of the main inputs for our work is the following computation of infinite $\Pp^1$-loop spaces for motivic Thom spectra of stable vector bundles of rank $0$. For the $\infty$-categorical definition of group completion, see~\cite[Remark~4.5]{GGNUniversalityInfLoops}.

\begin{theorem}[Elmanto-Hoyois-Khan-Sosnilo-Yakerson]
\label{thm:map space thom}
Let $k$ be a perfect field, $Y$ a smooth $k$-scheme, $E$ a vector bundle over $Y$ of rank $r$. Then the map $\Theta_E$, constructed in~\eqref{eq:thetaE}, induces an equivalence of presheaves of spaces on $\smk$:
$$\Theta_E \colon \Loc_{\Nis} (\Loc_{\Aa^1} \Fr_E(-, Y))^{\gp} \xrightarrow{\sim}
\Maps_{\SH(k)} (\Sigma^{\infty}_{\Pp^1} (-)_+, 
\Sigma_T^{-r} \Sigma^{\infty}_T \Th_Y(E)),$$
where $\gp$ denotes group completion with respect to the  $\Ee_{\infty}$-structure from Proposition~\ref{prop:E-additivity}.
\end{theorem}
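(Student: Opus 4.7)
The plan is to adapt the Garkusha--Panin framed motives theorem to the $E$-framed setting, by assembling the presheaves $\Fr_E(-,Y)$ into a motivic $\Pp^1$-spectrum $M^E_{\mathrm{fr}}(Y)$ and showing that it is a $\Pp^1$-$\Omega$-spectrum whose underlying motivic spectrum is $\Sigma_T^{-r}\Sigma^{\infty}_T \Th_Y(E)$. Combined with Voevodsky's Lemma~\ref{lm:voev thom}, which handles the comparison at each level, this yields the desired equivalence after passing to group completion.

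First I would define the levels
\[
M^E_{\mathrm{fr}}(Y)_n \; := \; \bigl(\Loc_{\Nis}\Loc_{\Aa^1}(T^{n} \wedge \Fr_E(-,Y))\bigr)^{\gp},
\]
with bonding maps induced by the suspension morphism $\sigma$, and use the $\Aa^1$-local equivalence $T \simeq (\Pp^1,\infty)$ to view this as a $\Pp^1$-spectrum. Next, exploiting the $\Ee_\infty$-structure from Proposition~\ref{prop:E-additivity}, the recognition principle for motivic infinite loop spaces, and the strict $\Aa^1$-invariance theorem of~\cite{GPHomInvPresheaves} applied to the radditive presheaves $\Fr_E(-,Y)$, I would verify that each level is well-behaved. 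Then I would prove that the bonding maps become equivalences after $\Omega_{\Pp^1}$, which amounts to an $E$-framed version of the cancellation theorem of Ananyevskiy--Garkusha--Panin~\cite{AGPCancellation}. Finally, by Lemma~\ref{lm:voev thom} the zeroth level $M^E_{\mathrm{fr}}(Y)_0$ represents exactly $\Maps_{\SH(k)}(\Sigma^{\infty}_{\Pp^1}(-)_+, \Sigma_T^{-r}\Sigma^{\infty}_T\Th_Y(E))$, compatibly with $\Theta_E$, completing the identification.

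The main obstacle is the cancellation step for $E$-framed correspondences, since the existing cancellation theorem is formulated for ordinary framed correspondences into a smooth scheme rather than into a vector bundle. My approach would be to reduce to the trivial case via Zariski-local triviality of $E$: locally on $Y$, the bundle $E$ is trivial, so $\Fr_E$ agrees with ordinary framed correspondences and the existing cancellation theorem applies, while the additivity in Proposition~\ref{prop:E-additivity} together with Nisnevich descent allow one to globalize. Careful bookkeeping is needed to confirm that Nisnevich sheafification, $\Aa^1$-localization, and group completion all interact properly with this Zariski-local reduction, and perfectness of $k$ enters essentially in the strict homotopy invariance step.
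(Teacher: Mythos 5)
Your overall architecture (assemble the $E$-framed presheaves into a $\Pp^1$-spectrum, prove it is an $\Omega$-spectrum via strict homotopy invariance and cancellation, then identify level zero using Lemma~\ref{lm:voev thom}) is the Garkusha--Panin route, and for $E$ trivial it is indeed how the statement is proved in~\cite{deloop2}. The paper itself does not reprove this: it quotes~\cite[Corollary~3.2.4, Remark~3.2.5]{deloop3}, whose proof reduces to the trivial-bundle case not by working with the equational model $\Fr_E$ directly, but by first passing to \emph{tangentially} framed correspondences, which have the descent and gluing properties in the $(Y,E)$-variable needed to globalize. That difference is where your argument has a genuine gap.

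Concretely, the step ``locally on $Y$ the bundle $E$ is trivial, so the existing cancellation theorem applies, and additivity plus Nisnevich descent allow one to globalize'' does not go through as stated. The presheaf $X\mapsto\Fr_E(X,Y)$ is a presheaf in the \emph{source} variable; as a functor of the pair $(Y,E)$ it is only functorial for fiberwise-injective bundle maps (Section~\ref{sssec:functor clopen}) and satisfies no evident Zariski or Nisnevich descent in $Y$. Additivity (Proposition~\ref{prop:E-additivity}) only handles disjoint unions, not gluing along a cover $Y=V_1\cup V_2$, so you cannot recover $\Loc_{\Nis}(\Loc_{\Aa^1}\Fr_E(-,Y))^{\gp}$ from its restrictions to trivializing opens; this is exactly the obstruction that the comparison with tangentially framed correspondences in~\cite{deloop1} and~\cite{deloop3} is designed to remove. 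For the same reason, an $E$-framed cancellation theorem is not a formal consequence of~\cite{AGPCancellation} plus local triviality: it would either have to be proved directly (a substantial new argument) or obtained after the descent problem above is solved. To repair the proof you should either import the equational-versus-tangential comparison and run the reduction at the level of tangentially framed correspondences, as in~\cite{deloop3}, or restrict your spectrum-level argument to the trivial case and supply a separate globalization mechanism; as written, the globalization and the $E$-framed cancellation are both asserted rather than established.
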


Note that Theorem~\ref{thm:map space thom} provides a fairly explicit model for infinite $\Pp^1$-loop spaces, because the Nisnevich sheafification of the presheaf $(\Loc_{\Aa^1} \Fr_E(-, Y))^{\gp}$ is an $\Aa^1$-invariant sheaf of grouplike $\Ee_{\infty}$-spaces, so there is no need to apply these localizations multiple times, as opposed to the general procedure of motivic localization. 

\begin{proof}
It is proven in~\cite[Corollary~3.2.4]{deloop3} that these presheaves of spaces are equivalent. By~\cite[Remark~3.2.5]{deloop3}, the equivalence is induced by the map $\Theta_E$: one reduces to the case of $E$ being a trivial bundle, and in that case the proof is given in~\cite[Corollary~3.3.8]{deloop2}. 

The proof of~\cite[Corollary~3.2.4]{deloop3} is based on structural properties of tangentially framed correspondences. A more straightforward proof of Theorem~\ref{thm:map space thom} can be found in~\cite[Theorem~2.2.2]{MuraThesis}. 
\end{proof}

\section{The unit map of $\msl$ via framed correspondences}

In this section, after recalling the construction of $\msl$, we introduce $\SL$-oriented framed correspondences, and interpret the unit map of $\msl$ in terms of a comparison map between framed correspondences and their $\SL$-oriented version. We then formulate the main result and explain its corollaries. 

\subsection{Recollection on $\msl$}

\sssec{} We briefly recall the construction of $\msl$ from~\cite[Section~4]{PW-MSLMSp}, to fix the notation. 
For $p \geqslant 1$ consider the Grassmanian $\Gr(n, np) = \Gr(n, (\Oo^{\oplus n}_k)^{\oplus p})$ and its tautological bundle $ \Tau(n, np)$. We denote the colimits along closed embeddings $\Gr_n = \col_{p} \Gr(n, np)$ and $\Tau_n = \col_{p} \Tau(n, np)$. The embedding $\Gr(n, n) \hookrightarrow \Gr(n, np)$ makes each $\Gr(n, np)$ a pointed scheme, and then $\Gr_n$ by taking colimit. 

For $n\geqslant 1$ consider the line bundle $\det(\Tau(n, np)) \to \Gr(n, np)$. The oriented Grassmanian is defined as \[\GR(n, np) = \det(\Tau(n, np)) - z(\Gr(n, np)) \in \smk.\]  
The projection $\pi_{n, np} \colon \GR(n, np) \to \Gr(n, np)$ is a principal $\Gm$-bundle. 
Define $\TAU(n, np) = \pi_{n, np}^*(\Tau(n, np)).$
Denote $\GR_n = \col_{p} \GR(n, np)$ and $\TAU_n = \col_{p} \TAU(n, np).$
By definition, \[\msl = \col_n \Sigma^{-n}_T \Sigma^{\infty}_T \Th_{\GR_n}(\TAU_n) \in \SH(k).\]

\sssec{}
The distinguished point $\Gr(n, n) \hookrightarrow \Gr(n, np)$ induces the map $$\Gg_m \simeq \Lambda^n \Oo_{\Gr(n, n)}^n - 0 \hookrightarrow \Lambda^n \Tau(n, np) - z(\Gr(n, np)) = \GR(n, np).$$
Each scheme $\GR(n, np)$ is pointed by $1 \in \Gg_m$, and so is the colimit $\GR_n$. 

There are canonical morphisms
$\widetilde{j}_{n, m} \colon \GR_n \times \GR_m \to \GR_{n+m} $, which induce isomorphisms
\begin{equation}
\label{eq:r_n,m}
\TAU_n \times \TAU_m \xrightarrow{\sim} \widetilde{j}_{n, m}^* \TAU_{n+m}.
\end{equation}

The inclusion $\GR(n, np) \subset \det(\Tau(n, np))$ gives a nowhere vanishing section of the line bundle  $\det\TAU(n, np)$, so defines a trivialization
\begin{equation}
\label{eq:sl-orientation}
\Oo_{\GR(n, np)} \xrightarrow{\sim} \det(\TAU(n, np)).
\end{equation}

\subsection{Zeroth homotopy group of $\msl$}
\sssec{} \label{sssec:deloop3 for msl}
For a smooth $k$-scheme $X$, define $\Fr_{\TAU_n, m}(X, \GR_n) =  \col_p  \Fr_{\TAU(n, np), m}(X, \GR(n, np)),$
and similarly for stabilized correspondences $\Fr_{\TAU_n}(X, \GR_n)$.
Since $\Sigma^{n}_T \Sigma^{\infty}_T X_+$ is a compact object in $\SH(k)$, the maps $\Theta_{\TAU(n, np)}(X)$
from Lemma~\ref{lm:voev thom} induce after taking colimit along $p$ the map
\[\Theta_{\TAU_n}(X) \colon \Fr_{\TAU_n}(X, \GR_n) \longrightarrow  \Maps_{\SH(k)}\big(\Sigma^{n}_T \Sigma^{\infty}_T X_+, \Sigma^{\infty}_T \Th_{\GR_n}(\TAU_n)\big).\]
We obtain from Theorem~\ref{thm:map space thom} the following corollary, using that $\Aa^1$-localization, Nisnevich sheafification and group completion are left adjoint functors, Nisnevich sheaves are closed under filtered colimits, and group completion commutes with Nisnevich sheafification by~\cite[Lemma~5.5]{HoyoisCdh}.
\begin{cor}
\label{cor:maps into msl}
The colimit of maps $\Theta_{\TAU_n}$ induces an equivalence of presheaves of spaces on $\smk$:
$$\Theta_{\TAU} \colon \Loc_{\Nis} (\Loc_{\Aa^1} \col_n \Fr_{\TAU_n}(-, \GR_n))^{\gp} \xrightarrow{\sim}  \Maps_{\SH(k)}\big( \Sigma^{\infty}_T (-)_+, \msl \big). $$
In particular, 
$$\pi_0(\msl)_0(k) \simeq
\pi_0 \big( \Loc_{\Aa^1} \col_n \Fr_{\TAU_n}(-, \GR_n)(\spk)\big) ^{\gp}, $$
where right-hand side is the classical group completion of a monoid. 
\end{cor}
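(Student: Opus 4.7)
The plan is to deduce the corollary from Theorem~\ref{thm:map space thom} by passing to the two filtered colimits that define $\msl$: first over $p$ (to form $\GR_n$ and $\TAU_n$), and then over $n$ (to assemble the spectrum).

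First, I apply Theorem~\ref{thm:map space thom} to each pair $(\GR(n,np), \TAU(n,np))$. Since $\TAU(n,np)$ has rank $n$, this yields a natural equivalence
\begin{equation*}
\Loc_{\Nis}\big(\Loc_{\Aa^1}\Fr_{\TAU(n,np)}(-,\GR(n,np))\big)^{\gp} \xrightarrow{\sim} \Maps_{\SH(k)}\big(\Sigma^{\infty}_{\Pp^1}(-)_+, \Sigma^{-n}_T \Sigma^{\infty}_T \Th_{\GR(n,np)}(\TAU(n,np))\big),
\end{equation*}
which is also natural in $p$ along the closed embeddings $\Gr(n,np) \hookrightarrow \Gr(n,n(p+1))$.

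Next, I take the filtered colimit over $p$. On the left, $\Loc_{\Nis}$ and $\Loc_{\Aa^1}$ are left adjoints and hence commute with all colimits; group completion commutes with filtered colimits and with Nisnevich sheafification by~\cite[Lemma~5.5]{HoyoisCdh}; by definition of $\GR_n$ and $\TAU_n$, the target becomes $\Loc_{\Nis}(\Loc_{\Aa^1}\Fr_{\TAU_n}(-,\GR_n))^{\gp}$. On the right, $\Sigma^{\infty}_{\Pp^1} X_+$ is compact in $\SH(k)$ for every $X \in \smk$, so the filtered colimit passes inside $\Maps_{\SH(k)}$; by definition the argument becomes $\Sigma^{-n}_T \Sigma^{\infty}_T \Th_{\GR_n}(\TAU_n)$. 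I then take the colimit over $n$ along the bonding maps coming from the isomorphism~\eqref{eq:r_n,m}, repeating the same commutation arguments on both sides. Using $\msl = \col_n \Sigma^{-n}_T \Sigma^{\infty}_T \Th_{\GR_n}(\TAU_n)$, this produces the first equivalence of the corollary.

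The $\pi_0$ statement follows by evaluating at $\spk$ and applying $\pi_0$. On the right side, I obtain $\pi_0 \Maps_{\SH(k)}(\Sigma^{\infty}_T \spk_+, \msl) = [\unit_k, \msl]_{\SHh(k)} = \pi_0(\msl)_0(k)$ by the conventions fixed in the notation section. On the left side, $\spk$ admits only the trivial Nisnevich cover, so $\Loc_{\Nis}$ does not alter the value there, and $\pi_0$ of the group completion of an $\Ee_{\infty}$-space coincides with the ordinary group completion of its $\pi_0$; this yields the claimed description. The main obstacle is purely bookkeeping: matching structure maps correctly when taking the colimit over $n$, and verifying that each localization or group completion commutes with the filtered colimits for the expected categorical reason (left adjointness, compactness, or the cited lemma on group completion and Nisnevich sheafification). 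There is no new geometric input beyond Theorem~\ref{thm:map space thom}.
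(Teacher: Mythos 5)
Your proposal is correct and takes essentially the same route as the paper: the paper likewise applies Theorem~\ref{thm:map space thom} to each finite-level pair $(\GR(n,np),\TAU(n,np))$ and passes to the colimits over $p$ and $n$, citing exactly the same ingredients (left adjointness of $\Loc_{\Aa^1}$, $\Loc_{\Nis}$ and group completion, compactness of $\Sigma^{\infty}_T X_+$, closure of Nisnevich sheaves under filtered colimits, and the commutation of group completion with Nisnevich sheafification from \cite[Lemma~5.5]{HoyoisCdh}). The evaluation at $\spk$ for the $\pi_0$ statement, using that $\spk$ is a Nisnevich point so sheafification does not change the value there, is also the intended argument.
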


\begin{df}
The abelian group of \textit{linear $E$-framed correspondences} from $X$ to $Y$ of level $n$ is defined as
\[\ZF_{E, n}(X, Y) = \Z \cdot \Fr_{E, n}(X, Y) / (c \sqcup d - c - d),\]
 where $c \sqcup d$ is given by the disjoint union of the data of correspondences $c$ and $d$, whose supports are disjoint as subschemes of $\Aa^{n+r}_X$. Note that $\ZF_{E, n}(X, Y)$ is isomorphic to  the free abelian group on $E$-framed correspondences with connected support.
 
  The pairing
  \[\ZF_{E, n}(X, Y) \times \ZF_m(Y, Y) \longrightarrow \ZF_{E, n+m}(X, Y),\] induced by the composition in Section~\ref{sssec:functor. of E-cor},
  allows one to define stabilization with respect to suspension:
$\ZF_E(X, Y) = \col(\ZF_{E, 0}(X, Y) \xrightarrow{\sigma_Y} \ZF_{E, 1}(X, Y) \to \dots).$
 \end{df}
 
 We now express the zeroth homotopy group of motivic Thom spectra of stable vector bundles of rank $0$ via linear $E$-framed correspondences.
 For $E$ a trivial vector bundle of rank $0$ this result is stated in~\cite[Corollary~11.3]{GPFramedMotives}:
 \begin{equation}
 \label{eq:sphere pi0 linear}
 \pi_0(\sus Y_+)_0(k) \simeq \Coker(\ZF(\Aa^1, Y) \xrightarrow{i_0^* - i_1^*} \ZF(\spk, Y))= H_0(\ZF(\Delta^{\bullet}_k \times X, Y)),
 \end{equation}
 where $\ZF(\Delta^{\bullet}_k \times X, Y)$ is a simplicial abelian group.

\begin{lm}
\label{lm:group completion linear}
Let $X, Y$ be smooth $k$-schemes, and $E$ a vector bundle over $Y$ of rank $r$. Then the following abelian groups are canonically isomorphic:
$$\pi_0(\Loc_{\Aa^1} \Fr_E(-, Y)(X))^{\gp} \simeq
H_0(\ZF_E(\Delta^{\bullet}_k \times X, Y)).$$
\end{lm}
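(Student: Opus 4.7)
The plan is to compute $\pi_0$ of the left-hand side as an explicit coequalizer, identify the induced commutative monoid structure as disjoint union, and then recognize the right-hand side as the corresponding group completion.

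First I would use the explicit model $(\Loc_{\Aa^1}P)(X) = \col_{n\in\Delta^{\op}} P(X\times \Delta^n_k)$ for the set-valued presheaf $P = \Fr_E(-,Y)$. Since each $\Fr_E(X\times\Delta^n_k,Y)$ is a discrete set, the colimit is the geometric realization of a simplicial set, and $\pi_0$ is computed as the coequalizer at level $1$. Using $\Delta^1_k\simeq\Aa^1_k$, this yields
\[
\pi_0\bigl(\Loc_{\Aa^1}\Fr_E(-,Y)(X)\bigr) \;=\; \coeq\bigl(\Fr_E(X\times\Aa^1_k,Y)\rightrightarrows \Fr_E(X,Y)\bigr),
\]
where the two arrows are pullback along $i_0,i_1\colon X\hookrightarrow X\times\Aa^1_k$.

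Next I would identify the commutative monoid structure induced by the $\Ee_\infty$-structure of Proposition~\ref{prop:E-additivity}. Tracing through the $\Fin_*$-object $\langle n\rangle\mapsto \Fr_{E^{\sqcup n}}(-,Y^{\sqcup n})$, the addition on $\pi_0$ is the composite $\mathrm{fold}_*\circ\alpha^{-1}$, where $\alpha$ is the Segal map and $\mathrm{fold}\colon\langle 2\rangle\to\langle 1\rangle$ collapses both points. Using the explicit $\Aa^1$-homotopy inverse $\beta$ from the proof of Proposition~\ref{prop:E-additivity}, a direct inspection shows that $[c]+[d]$ is represented by the disjoint-union correspondence $c\sqcup d$, obtained by taking one suspension and arranging supports over $t=0$ and $t=1$ so they remain disjoint. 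Hence the monoid operation on $\pi_0$ is exactly disjoint union, so $\pi_0\bigl(\Loc_{\Aa^1}\Fr_E(-,Y)(X)\bigr)$ is the commutative monoid $(\Fr_E(X,Y),\sqcup)$ modulo $\Aa^1$-homotopy.

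Finally, $\ZF_E(X,Y)$ is by definition the Grothendieck group completion of the commutative monoid $(\Fr_E(X,Y),\sqcup)$, and this construction is levelwise in the simplicial direction. Since group completion is left adjoint to the forgetful functor from abelian groups to commutative monoids, it commutes with the coequalizer computing $\pi_0$, and we conclude
\[
\pi_0\bigl(\Loc_{\Aa^1}\Fr_E(-,Y)(X)\bigr)^{\gp} \;=\; \coeq\bigl(\ZF_E(X\times\Aa^1_k,Y)\rightrightarrows \ZF_E(X,Y)\bigr) \;=\; H_0\bigl(\ZF_E(\Delta^{\bullet}_k\times X,Y)\bigr),
\]
naturally in $X$. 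The main obstacle is the middle step: checking that the abstract $\Ee_\infty$-structure of Proposition~\ref{prop:E-additivity} descends on $\pi_0$ to the naive disjoint union. Once this identification is secured, the outer two steps are formal manipulations with coequalizers and group completion.
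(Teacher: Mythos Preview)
Your strategy is the same as the paper's, and you have correctly isolated the one genuinely non-formal step. The paper resolves your ``main obstacle'' with two explicit $\Aa^1$-homotopies: one ($H'$) showing that the sum $+_s$ coming from $\beta$ (suspend once and place supports over $t=0$ and $t=1$) agrees up to $\Aa^1$-homotopy with the naive disjoint union $c_1\sqcup c_2$ followed by a suspension, and another ($H$) showing that any two supports can be made disjoint after a single suspension and an $\Aa^1$-homotopy. One caution about your phrasing: the sentence ``$\ZF_E(X,Y)$ is by definition the Grothendieck group completion of the commutative monoid $(\Fr_E(X,Y),\sqcup)$'' is not literally correct, since $\sqcup$ is only partially defined (it requires disjoint supports in $\Aa^{n+r}_X$), so $(\Fr_E(X,Y),\sqcup)$ is not a monoid. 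The actual monoid law on $\pi_0$ is $+_s$, and $\ZF_E$ is the free abelian group modulo the partial relation $c\sqcup d = c+d$; the content of the lemma is precisely that these two presentations coincide after $\Aa^1$-homotopy, which is what the two homotopies above establish.
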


\begin{proof}
 By definition, \[\pi_0(\Loc_{\Aa^1} \Fr_E(-, Y)(X)) \simeq
 \coeq\big( \Fr_E(\Aa^1_X, Y) \rightrightarrows \Fr_E(X, Y) \big).\]
 The monoid operation is induced by the following map:
\[
  \Fr_{E, n}(X, Y) \times  \Fr_{E, n}(X, Y) \xrightarrow{\beta_n(X)}  \Fr_{E \sqcup E, n+1}(X, Y \sqcup Y) \xrightarrow{(\id \sqcup \id)_*} \Fr_{E, n+1}(X, Y),
\]
where $\beta_n(X)$ was defined in the proof of Proposition~\ref{prop:E-additivity}. Since taking free abelian group on a set is a left adjoin functor, it preserves colimits.
Hence the group completion is computed as follows:
 $$\pi_0(\Loc_{\Aa^1} \Fr_E(-, Y)(X))^{\gp} \simeq
\Coker\big( \Z \cdot \Fr_E(\Aa^1 \times X, Y) \xrightarrow{i_0^* - i_1^*} \Z \cdot \Fr_E(X, Y) \big)  \big/ \thicksim_{s} \, ,$$ 
 where the equivalence relation $\thicksim_{s}$ is given by equivalences for each $c_1, c_2 \in \Fr_{E, n}(X, Y)$: 
 $$[(U_1, \phi_1, g_1)] +_s [(U_2, \phi_2, g_2)] \thicksim_{s}  
 [(U_1 \times \Aa^1 \sqcup U_2 \times \Aa^1, \phi_1 \times t_1 \sqcup \phi_2 \times (t_2-1), g_1 \sqcup g_2 \circ \pr_{U_1 \sqcup U_2})].$$
 Here $[-]$ denotes equivalence classes in the cokernel, and the right-hand side is the equivalence class of a correspondence in $\Fr_{E, n+1}(X, Y)$.

On the other hand, $\ZF_E(X, Y)$ is constructed as the quotient of the  free abelian group $\Z \cdot \Fr_E(X, Y)$, with equivalence relation given by the following equivalences for $c_1, c_2 \in \Fr_{E, n}(X, Y)$ with disjoint supports $Z_1$ and $Z_2$ in $\Aa^{n+r}_X$:
 $$(U_1, \phi_1, g_1) + (U_2, \phi_2, g_2) \sim 
 (U_1 \times \Aa^1 \sqcup U_2 \times \Aa^1, \phi_1 \times t_1 \sqcup \phi_2 \times t_2, g_1 \sqcup g_2 \circ \pr_{U_1 \sqcup U_2}).$$
Here the right-hand side belongs to $\Fr_{E, n+1}(X, Y)$, because we postcomposed the sum $c_1 + c_2$ with the suspension $\sigma_Y$.

As we can see, this equivalence relation is a priori different, but it is the same as $\thicksim_s$ up to $\Aa^1$-homotopy. Indeed, let $c_1, c_2 \in \Fr_{E, n}(X, Y)$ have supports $Z_1$ and $Z_2$ that are not disjoint. 
Then we can make them disjoint by suspending and applying an  $\Aa^1$-homotopy:
$$H = (U_2 \times \Aa^1 \times \Aa^1, \phi_2 \times (t  - s), g_2  \circ \pr_{U_2} ) \in \Fr_{E, n+1}(\Aa^1 \times X, Y), $$
where $s$ denotes the homotopy coordinate. 
This way we get:
$i_0^*(H) = \sigma_Y \circ c_2$, and $\supp(i_1^*(H)) = Z_2 \times 1$ is  disjoint with $Z_1 \times 0 = \supp(\sigma_Y \circ c_1)$ in $\Aa^{n+r+1}_X$.

Similarly, sums $+$ and $+_s$ are equivalent via the $\Aa^1$-homotopy in $\Fr_{E, n+1}(\Aa^1 \times X, Y)$:
$$H' = \big( (U_1 \times \Aa^1 \sqcup U_2 \times \Aa^1) \times \Aa^1, \phi_1 \times t_1 \sqcup \phi_2 \times (t_2-s), g_1 \sqcup g_2 \circ \pr_{U_1 \sqcup U_2} \big), $$
where $s$ denotes the homotopy coordinate. The claim follows.

\end{proof}

Combining Corollary~\ref{cor:maps into msl} and  Lemma~\ref{lm:group completion linear}, we get an explicit presentation of $ \pi_0(\msl)_0(k)$, since all the functors involved commute with filtered colimits. 
\begin{cor}
\label{cor:pi0 msl}
There is a canonical isomorphism of abelian groups:
\[ \pi_0(\msl)_0(k) \simeq \col_n H_0 \big( \ZF_{\TAU_n}(\del, \GR_n) \big) .\]
\end{cor}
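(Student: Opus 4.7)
The statement is a formal consequence of Corollary~\ref{cor:maps into msl} and Lemma~\ref{lm:group completion linear}; the only substantive content is that all operations involved commute with the two filtered colimits at play (in $n$ along the $\msl$-tower, and in $p$ along the Grassmannian exhaustion $\Gr(n, np) \subset \Gr_n$). The plan is to chain these two inputs, taking care that Lemma~\ref{lm:group completion linear} is stated only for finite-rank vector bundles.

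First I would evaluate Corollary~\ref{cor:maps into msl} at $X = \spk$ to obtain
\[\pi_0(\msl)_0(k) \simeq \pi_0\!\left(\Loc_{\Aa^1} \col_n \Fr_{\TAU_n}(-, \GR_n)(\spk)\right)^{\gp}.\]
The functor $\Loc_{\Aa^1}$ is a left adjoint, $\pi_0$ commutes with sifted colimits, and group completion of $\Ee_\infty$-monoids in presheaves of spaces is a left adjoint to the forgetful functor from grouplike $\Ee_\infty$-monoids, and hence preserves filtered colimits. These three observations let me pull the colimit over $n$ outside and rewrite the right-hand side as
\[\col_n \pi_0\!\left(\Loc_{\Aa^1} \Fr_{\TAU_n}(-, \GR_n)(\spk)\right)^{\gp}.\]

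Next, for each fixed $n$ I would apply Lemma~\ref{lm:group completion linear} to the finite-rank bundles $\TAU(n,np) \to \GR(n,np)$, giving
\[\pi_0\!\left(\Loc_{\Aa^1} \Fr_{\TAU(n,np)}(-, \GR(n,np))(\spk)\right)^{\gp} \simeq H_0\!\left(\ZF_{\TAU(n,np)}(\del, \GR(n,np))\right)\]
for every $p$. By the definitions in Section~\ref{sssec:deloop3 for msl}, the presheaf $\Fr_{\TAU_n}(-, \GR_n)$ is the filtered colimit in $p$ of $\Fr_{\TAU(n,np)}(-, \GR(n,np))$, and the analogous colimit presentation holds for $\ZF_{\TAU_n}(\del, \GR_n)$ since the linearization $\Z[-]$ is a left adjoint. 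Passing to the colimit in $p$ on both sides and invoking once more the commutation of $\pi_0$, $\Loc_{\Aa^1}$, group completion, and $H_0$ with filtered colimits produces the level-$n$ identification
\[\pi_0\!\left(\Loc_{\Aa^1} \Fr_{\TAU_n}(-, \GR_n)(\spk)\right)^{\gp} \simeq H_0\!\left(\ZF_{\TAU_n}(\del, \GR_n)\right),\]
which combined with the first reduction yields the claim.

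The proof contains no genuine obstacle; the only subtle point is the preservation of group completion under filtered colimits, which holds because it is a reflective localization, and this is already implicitly used in the proof of Corollary~\ref{cor:maps into msl} in the same spirit as~\cite[Lemma~5.5]{HoyoisCdh}.
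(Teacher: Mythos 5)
Your argument is correct and is exactly the paper's proof, which simply combines Corollary~\ref{cor:maps into msl} with Lemma~\ref{lm:group completion linear} and notes that all the functors involved commute with filtered colimits; you have merely made explicit the two colimits (over $n$ and over $p$) and the point that the lemma must be applied at the finite stages $\TAU(n,np)\to\GR(n,np)$ before passing to the limit. No gap.
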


\subsection{$\SL$-oriented framed correspondences} \label{ssec:SL-cor}
For future comparison with $\pi_0(\unit)_0(k)$, we now rewrite Corollary~\ref{cor:pi0 msl} in more convenient terms. 

\begin{df}
Let $X$, $Y$ be smooth $k$-schemes. The set of \textit{$\SL$-oriented framed correspondences} of level $n$ from $X$ to $Y$ is defined as  
$\Fr_n^{\SL}(X, Y) = \Fr_{\TAU_n \times Y, 0}(X, \GR_n \times Y)$. 
More concretely, an $\SL$-oriented framed correspondence $c=(U, \phi, g) \in \Fr_n^{\SL}(X, Y)$ is given by the following data:
\begin{itemize}
\item a closed subscheme $Z$ in $\Aa^n_X$, finite over $X$;
\item an \'etale neighborhood $p \colon U \to \Aa^n_X$ of $Z$;
\item a  morphism $\phi \colon U \to \TAU_n$
such that $Z$ as a closed subscheme of $U$ is the preimage of the zero section $z(\GR_n) \subset \TAU_n $;
\item a morphism $g \colon U \to Y$.
\end{itemize}
Here by a morphism $\phi \colon U \to \TAU_n$ we mean a map $U \to \col_p \TAU(n, np)$, represented by a morphism $\phi \colon U \to \TAU(n, np)$ for some $p$. 
\end{df}

\sssec{} 
\label{sssec: SL-composition}
As for framed correspondences, there is a composition law:
\begin{gather*}
\circ \colon \Fr_n^{\SL}(X, Y) \times \Fr_m^{\SL}(Y, V) \longrightarrow \Fr_{n+m}^{\SL}(X, V) \\
 ((U, \phi, g), (U', \phi', g')) \mapsto  (U \times_Y U',  s_{n, m} \circ (\phi \circ \pr_U, \phi' \circ \pr_{U'}), g' \circ \pr_{U'}),
\end{gather*}
where $s_{n, m} \colon \TAU_n \times \TAU_m \simeq j_{n, m}^* \TAU_{n+m}  \to \TAU_{n+m}$ is the composition of the isomorphism~\eqref{eq:r_n,m} and the projection. In the same way, the product of framed correspondences, defined in Section~\ref{sssec: external product}, generalizes to the product of $\SL$-oriented framed correspondences. Similarly, one can define the category of $\SL$-oriented framed correspondences $\Fr^{\SL}_*(k)$, which has smooth $k$-schemes as objects, and morphisms are given by
$\Fr_*^{\SL}(X, Y) = \bigvee_{i=0}^{\infty} \Fr_i^{\SL}(X, Y)$.

\sssec{} \label{sssec: functor E} Inclusion of the distinguished point into $\GR_n$ induces an embedding $\Aa^n \hookrightarrow \TAU_n$, which after restriction to the zero section gives $0 \hookrightarrow \GR_n$. For each $X, Y \in \smk$ this embedding induces a natural map between correspondences:
\begin{equation}
\label{eq:eps_n}
\Fr_n(X, Y) \hookrightarrow \Fr^{\SL}_n(X, Y),
\end{equation}
which respects the composition and induces a faithful functor
$\E \colon \Fr_*(k) \longrightarrow \Fr^{\SL}_*(k).$

\sssec{} The following generalization of Lemma~\ref{lm:voev thom} holds:
\begin{lm}
\label{lm:voev msl}
Let $X$, $Y$ be smooth $k$-schemes. Then there is a natural bijection:
\[\Theta_n^{\SL} \colon \Fr^{\SL}_n(X, Y) \xrightarrow{\sim} \Hom_{\PSh_{\Nis}(\smk)_{\bullet}} ((\Pp^1, \infty)^{\wedge n} \wedge X_+, \Loc_{\Nis}(\Th_{\GR_n}(\TAU_n) \wedge Y_+)).\]
\end{lm}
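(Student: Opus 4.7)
The plan is to reduce directly to Lemma~\ref{lm:voev thom} by working at each finite stage of the colimits defining $\GR_n$ and $\TAU_n$, and then pass to the colimit over $p$. For fixed $p \geqslant 1$, the scheme $\GR(n, np) \times Y$ is smooth over $k$, and the pullback $\TAU(n, np) \times Y$ of $\TAU(n, np)$ along the projection $\GR(n, np) \times Y \to \GR(n, np)$ is a rank $n$ vector bundle over it. Applying Lemma~\ref{lm:voev thom} with $E = \TAU(n, np) \times Y$ and level $0$ yields a natural bijection
\[\Fr_{\TAU(n, np) \times Y, 0}(X, \GR(n, np) \times Y) \xrightarrow{\sim} \Hom_{\PSh_{\Nis}(\smk)_{\bullet}} \bigl((\Pp^1, \infty)^{\wedge n} \wedge X_+, \Loc_{\Nis}(\Th_{\GR(n, np)}(\TAU(n, np)) \wedge Y_+)\bigr),\]
using the canonical identification $\Th_{\GR(n, np) \times Y}(\TAU(n, np) \times Y) \simeq \Th_{\GR(n, np)}(\TAU(n, np)) \wedge Y_+$.

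Next, I take the filtered colimit over $p$. The left-hand side is $\Fr_n^{\SL}(X, Y)$ by the very definition of that set. On the right-hand side, $\Loc_{\Nis}$ commutes with colimits as a left adjoint, and both smash product with $Y_+$ and the Thom construction commute with colimits, so the target rearranges to $\Loc_{\Nis}(\Th_{\GR_n}(\TAU_n) \wedge Y_+)$. The bijection $\Theta_n^{\SL}$ is then obtained as the colimit of the bijections $\Theta_{\TAU(n, np) \times Y, 0}$, once we commute $\Hom_{\PSh_{\Nis}(\smk)_{\bullet}}((\Pp^1, \infty)^{\wedge n} \wedge X_+, -)$ past the filtered colimit in the target.

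The main point to verify is this last commutation, which amounts to the compactness of $(\Pp^1, \infty)^{\wedge n} \wedge X_+$ in the category of pointed Nisnevich sheaves of sets. This is standard, since the sheaf is built from finitely many representables via smash product and quotients by closed subschemes. Unwinding the construction of $\Theta_{\TAU(n, np) \times Y, 0}$ shows that the resulting map $\Theta_n^{\SL}$ sends $c = (U, \phi, g) \in \Fr^{\SL}_n(X, Y)$ with support $Z$ and étale neighborhood $U \to \Aa^n_X$ to the morphism induced by
\[(\Pp^1)^{\times n} \times X \simeq \Loc_{\Nis}\bigl(((\Pp^1)^{\times n} \times X - Z) \sqcup_{U-Z} U\bigr) \xrightarrow{\const \sqcup \overline{(\phi, g)}} \Loc_{\Nis}\bigl(\TAU_n \times Y / (\TAU_n - z(\GR_n)) \times Y\bigr),\]
exactly in parallel with the proof of Lemma~\ref{lm:voev thom}.
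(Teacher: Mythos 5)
Your proposal is correct and takes essentially the same approach as the paper: the paper's proof likewise applies the finite-level form of Voevodsky's lemma to the rank-$n$ bundles $\TAU(n, np) \times Y$ over $\GR(n, np) \times Y$ at level $0$ and then passes to the colimit over $p$. Your explicit justification of the colimit commutation via compactness of $(\Pp^1, \infty)^{\wedge n} \wedge X_+$ is a detail the paper leaves implicit.
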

\begin{proof}
Morphisms into the Nisnevich sheafification of $\Th_{\GR(n, np)}(\TAU(n, np)) \wedge Y_+$ are computed as $\Fr_{\TAU(n, np) \times Y, 0}(X, \GR(n, np) \times Y)$ by~\cite[Corollary~A.1.5 and Remark~A.1.6]{deloop1}, and then one passes to the colimit along $p$.
\end{proof}

By stabilizing $\Theta_n^{\SL}$ with respect to suspension and using that $\Sigma^{\infty}_{\Pp^1} X_+$ is a compact object in $\SH(k)$, we get an induced map of presheaves on $\smk$:
\[\Theta^{\SL} \colon \Fr^{\SL}(-, Y)  \longrightarrow  \Maps_{\SH(k)} (\Sigma^{\infty}_{\Pp^1} (-)_+, \msl \otimes \sus Y_+).\]

\begin{df}
We define \textit{linear $\SL$-oriented framed correspondences} as 
 \[\ZF^{\SL}_n(X, Y) = \Z\cdot  \Fr^{\SL}_n(X, Y) /  (c \sqcup d - c - d).\]
\end{df}
The map \eqref{eq:eps_n} descends to the map
$\varepsilon_n \colon \ZF_n(X, Y) \hookrightarrow \ZF^{\SL}_n(X, Y).$
In particular, we can define an abelian group 
$\ZF^{\SL}(X, Y) = \col(\ZF_0^{\SL}(X, Y) \xrightarrow{\sigma_Y} \ZF^{\SL}_{1}(X, Y) \to \dots),$
and the induced homomorphism of abelian groups:
\begin{equation}
\label{eq:unit map varpesilon}
\varepsilon \colon  \ZF(X, Y) \to \ZF^{\SL}(X, Y).
\end{equation}

\subsection{The unit map via framed correspondences}

\begin{lm}
\label{lm:linear frames}
Let $V$ be a smooth $k$-scheme. Then the following presheaves of abelian groups on $\smk$ are canonically isomorphic:
$$\psi \colon \ZF^{\SL}(-, V) \xrightarrow{\sim} \col_n  \ZF_{\TAU_n \times V}(-, \GR_n \times V).$$
\end{lm}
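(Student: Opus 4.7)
The plan is to exploit the tautology that $\ZF_n^{\SL}(X, V) = \ZF_{\TAU_n \times V, 0}(X, \GR_n \times V)$, which holds on the nose by the definition in Section~\ref{ssec:SL-cor}. Accordingly, $\psi_n$ will be the inclusion at the $m = 0$ stage of the inner colimit $\ZF_{\TAU_n \times V}(X, \GR_n \times V) = \col_m \ZF_{\TAU_n \times V, m}(X, \GR_n \times V)$, and the real work lies in promoting this level-wise inclusion to a compatible map of the outer colimits $\col_n$.

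First I would identify the transition maps in $\col_n \ZF_{\TAU_n \times V}(-, \GR_n \times V)$: they come from the bonding maps of the spectrum $\msl \otimes \sus V_+$. Concretely, combining the embedding $\iota \colon \Aa^1 \hookrightarrow \TAU_1$ (the fiber over the distinguished point of $\GR_1$) with the map $s_{1, n} \colon \TAU_1 \times \TAU_n \to \TAU_{n+1}$ from Section~\ref{sssec: SL-composition} yields a fiber-injective map of rank-$(n+1)$ vector bundles $(\Oo \oplus \TAU_n) \times V \to \TAU_{n+1} \times V$ (covering the corresponding map $\GR_n \times V \to \GR_{n+1} \times V$). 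By Section~\ref{sssec:functor clopen}, together with the tautological identity $\Fr_{(\Oo \oplus \TAU_n) \times V, m} = \Fr_{\TAU_n \times V, m+1}$, this induces maps $\Fr_{\TAU_n \times V, m+1}(X, \GR_n \times V) \to \Fr_{\TAU_{n+1} \times V, m}(X, \GR_{n+1} \times V)$, which assemble after $\col_m$ into the desired transitions.

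The heart of the argument is the compatibility square $\psi_{n+1} \circ \sigma^{\SL}_V = \text{(transition)} \circ \psi_n$, which I would verify by direct computation. Both ways around, a correspondence $(U, \phi, g) \in \Fr^{\SL}_n(X, V)$ goes to a correspondence with \'etale neighborhood $\Aa^1 \times U \simeq U \times_V (\Aa^1 \times V)$, and the two resulting framings $\Aa^1 \times U \to \TAU_{n+1} \times V$ read $(t, u) \mapsto s_{n, 1}(\phi(u), \iota(t))$ and $(t, u) \mapsto s_{1, n}(\iota(t), \phi(u))$ respectively. These are identified via the canonical symmetry $\TAU_n \times \TAU_1 \simeq \TAU_1 \times \TAU_n$ compatible with the isomorphism~\eqref{eq:r_n,m}. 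The main obstacle here will be careful bookkeeping of the two distinct composition laws from Section~\ref{sssec:functor. of E-cor} and Section~\ref{sssec: SL-composition}, so that no additional reshuffling is overlooked.

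Finally, bijectivity of $\psi$ will follow by a cofinality argument. Any element of $\ZF_{\TAU_n \times V, m}(X, \GR_n \times V)$ can be carried to $\ZF^{\SL}_{m + n}(X, V) = \ZF_{\TAU_{m + n} \times V, 0}(X, \GR_{m + n} \times V)$ by the full $m$-fold assembly: compose the trivial framing $U \to \Aa^m$ with the embedding $\Aa^m \hookrightarrow \TAU_m$ (fiber over the distinguished point of $\GR_m$) and then with $s_{m, n} \colon \TAU_m \times \TAU_n \to \TAU_{m + n}$. This exhibits the subsystem of $(n, 0)$-terms as cofinal in the double-indexed colimit $\col_n \col_m \ZF_{\TAU_n \times V, m}$, so the level-wise inclusions $\{\psi_n\}$ induce an isomorphism upon taking $\col_n$, yielding the desired $\psi$.
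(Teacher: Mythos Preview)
Your proposal is correct and takes essentially the same approach as the paper: both exploit the tautological identification $\ZF_n^{\SL}(X,V) = \ZF_{\TAU_n \times V, 0}(X, \GR_n \times V)$ together with the assembly map induced by $\Aa^m \times \TAU_n \hookrightarrow \TAU_{m+n}$. The paper packages your cofinality step as an explicit inverse $\chi_n \colon \ZF_{\TAU_n, r}(X, \GR_n) \to \ZF_{n+r}^{\SL}(X, \spk)$ and checks the identity $\sigma^r_{\GR_{n+r}} \circ \psi_{n+r} \circ \chi_n = \delta^r$ directly, but this is the same content.
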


\begin{proof}
To simplify notations, we assume that $V = \spk$, since the same argument applies for arbitrary $V \in \smk$. 
For a smooth $k$-scheme $X$ set
$$\psi_n(X) \colon \ZF^{\SL}_n(X, \spk) \to \ZF_{\TAU_n, 0}(X, \GR_n)$$
to be the identity map. 
Let $$\chi_n(X) \colon \ZF_{\TAU_n, r}(X, \GR_n) \to 
\ZF^{\SL}_{n+r}(X, \spk)$$
be the map induced by the embedding 
$ \Aa^r \times \TAU_n \hookrightarrow \TAU_{n+r}$ that restricts to the canonical embedding 
$0 \times \GR_n  \hookrightarrow \GR_{n+r}.$ 
Clearly, $\chi_n \circ \psi_n = \id $ (in this case $r=0$). For the other composition, consider  
$\alpha \in  \ZF_{\TAU_n, r}(X, \GR_n)$. Then we get
$$\sigma^r_{\GR_{n+r}} \big(\psi_{n+r} ( \chi_n (\alpha)) \big) = \delta^r( \alpha),$$
where $\delta$ denotes the suspension 
$\ZF_{\TAU_*}(X, \GR_*) \to \ZF_{\TAU_{*+1}}(X, \GR_{*+1})$. 
So, the correspondences $\alpha$ and $\psi_{n+r} (\chi_n (\alpha))$ become equivalent  after taking colimits with respect to $\sigma_{\GR_*}$ and $\delta$. Both maps $\psi_n(X)$ and $\chi_n(X)$ respect suspensions, and so stabilize to inverse maps $\psi(X)$ and $\chi(X)$, functorial in $X$.
\end{proof}

Lemma~\ref{lm:linear frames} allows us to rewrite Corollary~\ref{cor:pi0 msl} in the following way.
\begin{cor}
\label{cor:pi0 msl linear}
There is a canonical isomorphism of abelian groups:
\[ \pi_0(\msl)_0(k) \simeq H_0(\ZF^{\SL}(\Delta^{\bullet}_k, \spk)).\]
\end{cor}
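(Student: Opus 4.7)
The plan is to combine the two previously established results and nothing more: Lemma~\ref{lm:linear frames} already rewrites the coefficient system, and Corollary~\ref{cor:pi0 msl} already identifies $\pi_0(\msl)_0(k)$ with the colimit on the right-hand side of that rewriting. So the proof will be a short chain of canonical isomorphisms.

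First I would specialize Lemma~\ref{lm:linear frames} to the case $V = \spk$. Since $\GR_n \times \spk = \GR_n$ and $\TAU_n \times \spk = \TAU_n$, the lemma produces a natural isomorphism of presheaves of abelian groups
\[
\psi \colon \ZF^{\SL}(-, \spk) \xrightarrow{\sim} \col_n \ZF_{\TAU_n}(-, \GR_n).
\]
Evaluating termwise on the cosimplicial scheme $\Delta^{\bullet}_k$ gives an isomorphism of simplicial abelian groups
\[
\ZF^{\SL}(\Delta^{\bullet}_k, \spk) \xrightarrow{\sim} \col_n \ZF_{\TAU_n}(\Delta^{\bullet}_k, \GR_n).
\]

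Next I would take $H_0$ of both sides. Because $H_0$ of a simplicial abelian group is computed as a coequalizer, and coequalizers (being finite colimits) commute with filtered colimits of abelian groups, one obtains a canonical isomorphism
\[
H_0(\ZF^{\SL}(\Delta^{\bullet}_k, \spk)) \xrightarrow{\sim} \col_n H_0\bigl( \ZF_{\TAU_n}(\Delta^{\bullet}_k, \GR_n) \bigr).
\]
Composing with the isomorphism of Corollary~\ref{cor:pi0 msl} then yields the desired identification with $\pi_0(\msl)_0(k)$.

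There is essentially no obstacle here, since all the geometric content has been absorbed into the two results invoked: Lemma~\ref{lm:linear frames} handles the passage between the $\SL$-oriented framed correspondences and the $\TAU_n$-framed correspondences into the oriented Grassmannians, and Corollary~\ref{cor:pi0 msl} handles the identification of the colimit with $\pi_0(\msl)_0(k)$. The only point worth a sentence is the commutation of $H_0$ with the filtered colimit along $n$, which is a standard fact about simplicial abelian groups.
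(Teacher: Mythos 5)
Your proposal is correct and is exactly the paper's argument: the paper derives this corollary by specializing Lemma~\ref{lm:linear frames} to $V=\spk$ and substituting into Corollary~\ref{cor:pi0 msl}, with the (standard, implicit) commutation of $H_0$ with the filtered colimit along $n$. Nothing is missing.
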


\sssec{}
We can express in a similar form the group $\pi_0(\msl)_l(k) = [\unit, \Sigma_{\Gg_m}^l \msl]_{\SHh(k)}$ for $l \geqslant 0$. 
Let $V$ be a smooth $k$-scheme and let $\pr \colon V \times \GR(n, np) \to V$ be the projection to $V$. By applying the reasoning of Section~\ref{sssec:deloop3 for msl} to the vector bundles $\pr^* \TAU(n, np) \to V \times \GR(n, np)$, we obtain the following isomorphism of presheaves of spaces on $\smk$, generalizing Corollary~\ref{cor:maps into msl}:
\[
\col_n \Loc_{\Nis} (\Loc_{\Aa^1} \Fr_{V \times  \TAU_n}(-, V \times \GR_n))^{\gp} \xrightarrow{\sim}  \Maps_{\SH(k)}\big( \Sigma^{\infty}_T (-)_+,  \Sigma^{\infty}_T  V_+ \otimes \msl \big).
\]

Applying Lemma~\ref{lm:group completion linear} to $X = \spk$, $Y_p = V \times \GR(n, np)$,  $E_p =V \times \TAU(n, np)$, and taking colimit with respect to $p$ expresses the abelian group $[\unit, \Sigma^{\infty}_T  V_+ \otimes \msl]_{\SHh(k)}$ as
$$\col_n  \Coker\big( \ZF_{V \times \TAU_n}(\Aa^1,V \times \GR_n) \xrightarrow{i_1^* - i_0^*} \ZF_{V \times \TAU_n}(\spk, V \times \GR_n) \big).$$

By Lemma~\ref{lm:linear frames} we get:
$$ [\unit, \Sigma^{\infty}_T  V_+ \otimes \msl]_{\SHh(k)} \simeq 
H_0(\ZF^{\SL}(\Delta^{\bullet}_k, V)).$$

In particular, for $l \geqslant 0$ we get
\begin{equation} 
\label{eq:Gm^times l}
[\unit, \Sigma^{\infty}_T  (\Gg_m^l)_+ \otimes \msl]_{\SHh(k)} \simeq 
H_0(\ZF^{\SL}(\Delta^{\bullet}_k, \Gg_m^l)).
\end{equation} 
Moreover, we deduce 
\begin{equation} 
\label{eq:Gm^l}
 [\unit,  \Sigma_{\Gg_m}^l \msl]_{\SHh(k)} \simeq 
 H_0(\ZF^{\SL}(\Delta^{\bullet}_k, \Gg_m^{\wedge l})),
 \end{equation}
where the right-hand side denotes the zeroth homology of the simplicial abelian group 
\[\ZF^{\SL}(\Delta^{\bullet}_k, \Gg_m^{\wedge l}) = \Coker \big( \bigoplus_{i=1}^l \ZF^{\SL}(\Delta^{\bullet}_k, \Gg_m^{l-1}) \xrightarrow{\oplus_{i=1}^l (j_i)_*} \ZF^{\SL}(\Delta^{\bullet}_k, \Gg_m^l) \big),\]
with the maps induced by embeddings $j_i \colon \Gg_m^{l-1} \hookrightarrow \Gg_m^l$, inserting $1$ at $i$-th place. 
Indeed,~\eqref{eq:Gm^l} follows from~\eqref{eq:Gm^times l} because 
the cofiber sequence
\[\bigoplus_{i=1}^l  \Sigma^{\infty}_T  (\Gg_m^{l-1})_+ \xrightarrow{\oplus_{i=1}^l (j_{i})_{ *}}
 \Sigma^{\infty}_T  (\Gg_m^l)_+ \longrightarrow
 \Sigma^{\infty}_T  \Gg_m^{\wedge l}\]
 splits in $\SHh(k)$, and $\ZF^{\SL}(\Delta^{\bullet}_k, \Gg_m^{\wedge l})$ is a direct summand of the simplicial group $\ZF^{\SL}(\Delta^{\bullet}_k, \Gg_m^l)$. 
 
 \sssec{}
 We now compare this expression with the formula \eqref{eq:sphere pi0 linear} for $\pi_0(\unit)_0(k)$. Recall that the unit map $e \colon \unit \to \msl$ is induced by the embeddings of distinguished points in $\GR_n $, giving $e_n \colon T^n \hookrightarrow \Th_{\GR_n}(\TAU_n)$. For a smooth $k$-scheme $V$ we have a commutative diagram of presheaves of spaces on $\smk$:
$$\xymatrix{
\Loc_{\Nis} (\Loc_{\Aa^1}  \Fr(-, V))^{\gp}  \ar[d]_{(\varepsilon_n)_*} \ar[r]^{\Theta_V}_{\sim} & \Maps_{\SH(k)}\big( \Sigma^{\infty}_T (-)_+, \Sigma^{\infty}_T V_+ ) \ar[d]^{\id \otimes (e_n)_*}  \\
\Loc_{\Nis} (\Loc_{\Aa^1}  \Fr_{V \times \TAU_n}(-, V \times \GR_n))^{\gp} \ar[r]^-{\Theta_{V \times \TAU_n}}_-{\sim}  & \Maps_{\SH(k)}\big( \Sigma^{\infty}_T (-)_+, \Sigma^{\infty}_T V_+ \otimes \Sigma^{-n}_T \Th_{\GR_n}(\TAU_n)).\\
}
$$
Here the left vertical morphism is induced by the stabilization of the maps: 
$$\varepsilon_{n, r} \colon \Fr_{n+r}(-, V) \to \Fr_{V \times \TAU_n, r} (-, V \times \GR_n), $$
given by embeddings $\Aa^n \hookrightarrow \TAU_n $ over the  distinguished point of $\GR_n$.

\sssec{} After taking colimit and applying~\eqref{eq:Gm^l}, we get the following geometric interpretation of the unit map on $\Gg_m$-homotopy groups of $\msl$.

\begin{prop}
\label{prop:unit map msl} 
The unit map $e \colon \unit \to \msl  $ induces the following commutative diagram of abelian groups for $l \geqslant 0$:
$$\xymatrix{
H_0(\ZF(\Delta^{\bullet}_k,  \Gg_m^{\wedge l}))  \ar[d]_{\varepsilon_*} \ar[r]^-{\Theta_*}_-{\sim} & \pi_0(\unit)_l(k) \ar[d]^{e_*}  \\
H_0(\ZF^{\SL}(\Delta^{\bullet}_k,  \Gg_m^{\wedge l})) 
\ar[r]^-{\,(\Theta^{\SL})_* \,}_-{\sim}  & \pi_0(\msl)_l(k)\\
}
$$
Here horizontal maps are induced by corresponding versions of Voevodsky's lemma (Lemma~\ref{lm:voev thom}  and Lemma~\ref{lm:voev msl}), and the left vertical map is induced by the homomorphism 
$\varepsilon \colon \ZF(\Delta^{\bullet}_k,  \Gg_m^{\wedge l}) \to \ZF^{\SL}(\Delta^{\bullet}_k,  \Gg_m^{\wedge l})$, defined in \eqref{eq:unit map varpesilon}.
\end{prop}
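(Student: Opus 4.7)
The strategy is to obtain the diagram essentially by collecting the ingredients assembled earlier in this section and passing to $\pi_0$. Concretely, the plan is to start from the commutative square of presheaves of spaces displayed just before the statement (for an auxiliary smooth scheme $V$), then evaluate at $\spk$, take $\pi_0$, apply the identifications of Lemmas~\ref{lm:group completion linear} and~\ref{lm:linear frames} on the left, specialize to $V = \Gg_m^l$, and finally pass from $\Gg_m^l$ to the smash power $\Gg_m^{\wedge l}$ using the splitting discussed around~\eqref{eq:Gm^l}.

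First I would fix $l \geqslant 0$, set $V = \Gg_m^l$, and rewrite the commutative square of presheaves of spaces from the paragraph preceding Proposition~\ref{prop:unit map msl} with its left vertical arrow stabilized over $n$ (so that the bottom row computes mapping spaces into $\msl \otimes \sus V_+$ via Corollary~\ref{cor:maps into msl}). The horizontal maps are then equivalences of presheaves by Theorem~\ref{thm:map space thom} applied, respectively, to the trivial rank-$0$ bundle over $V$ and to $V \times \TAU_n \to V \times \GR_n$ passed through the colimit over $n$. Evaluating at $\spk$ and applying $\pi_0$ produces a commutative square of abelian groups.

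Next I would identify the left column with linear framed correspondences. For the upper-left vertex, Lemma~\ref{lm:group completion linear} (with $X = \spk$, $E$ trivial of rank $0$) gives the canonical isomorphism with $H_0(\ZF(\Delta^{\bullet}_k, \Gg_m^l))$; for the lower-left vertex, the same lemma applied to $E_p = V \times \TAU(n, np)$ over $Y_p = V \times \GR(n, np)$, combined with Lemma~\ref{lm:linear frames} and a colimit over $n$, yields the canonical isomorphism with $H_0(\ZF^{\SL}(\Delta^{\bullet}_k, \Gg_m^l))$. Under these identifications, the left vertical map tracks to the stabilization of $\varepsilon_{n, r}$, which by construction is induced by the embedding $\Aa^n \hookrightarrow \TAU_n$ over the distinguished point of $\GR_n$, i.e.\ the map $\varepsilon$ of~\eqref{eq:unit map varpesilon} applied after smashing with $\Gg_m^l$; this is precisely the content of the observation in Section~\ref{sssec: functor E} together with the composition/product compatibilities of Section~\ref{sssec: SL-composition}. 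On the spectrum side, the right vertical map becomes $\id \otimes (e_n)_*$ in the colimit, which is tautologically $\id \otimes e_*$.

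Finally I would pass from $\Gg_m^l$ to $\Gg_m^{\wedge l}$. The cofiber sequence
\[
\bigoplus_{i=1}^{l} \sus (\Gg_m^{l-1})_+ \xrightarrow{\oplus_{i=1}^{l} (j_i)_*} \sus (\Gg_m^l)_+ \longrightarrow \sus \Gg_m^{\wedge l}
\]
splits in $\SHh(k)$, and the presheaf-level diagram above is natural in $V$; applying it to each $j_i \colon \Gg_m^{l-1} \hookrightarrow \Gg_m^l$ and extracting the complementary direct summand produces the desired square with $\Gg_m^{\wedge l}$ in both the framed-correspondence expressions and the mapping spectra. On the top right this gives $\pi_0(\unit)_l(k)$ by~\eqref{eq:sphere pi0 linear}, and on the bottom right it gives $\pi_0(\msl)_l(k)$ by~\eqref{eq:Gm^l}. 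The potential obstacle is purely bookkeeping: one must check that the splittings are compatible with the left-hand column, i.e.\ that $\varepsilon_*$ commutes with the direct-sum decomposition induced by the maps $j_i$. But this is immediate from the functoriality of $\varepsilon$ in $Y$ (it is defined uniformly in the target), so the expected main difficulty is more presentational than mathematical, amounting to keeping careful track of naturality through the chain of identifications.
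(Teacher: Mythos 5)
Your proposal is correct and follows essentially the same route as the paper, which presents this proposition as a summary of the preceding constructions: the commutative square of presheaves with $\varepsilon_{n,r}$ and $\id \otimes (e_n)_*$, the identifications via Lemma~\ref{lm:group completion linear} and Lemma~\ref{lm:linear frames}, and the passage from $\Gg_m^l$ to $\Gg_m^{\wedge l}$ via the split cofiber sequence. Your added remark that the splittings are compatible with $\varepsilon_*$ by naturality in the target is exactly the bookkeeping point the paper leaves implicit.
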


\subsection{Framed correspondences and Milnor-Witt K-theory}
\label{ssec: neshitov}

To study the graded abelian group $H_0(\ZF(\Delta^{\bullet}_k,  \Gg_m^{\wedge *}))$, one first defines a ring structure. 
 As shown in~\cite[Section~3]{NeshitovFramedCorrMW}, the product of framed correspondences, defined in~\eqref{sssec: external product},
 descends to a product 
$$H_0(\ZF(\del \times X, Y)) \times H_0(\ZF(\del \times X', Y')) \to H_0(\ZF(\del \times X \times X', Y \times Y'))$$
for any $X$, $Y$, $X'$, $Y' \in \smk $.
Taking $X = X' = \spk$, $ Y = \Gg_m^n, Y' =  \Gg_m^m$, we get a multiplicative structure on the graded abelian group $H_0(\ZF(\del, \Gg_m^*))$, which descends to a multiplication on 
$H_0(\ZF(\del, \Gg_m^{\wedge *}))$. The main result of~\cite{NeshitovFramedCorrMW} is the following theorem.
\begin{theorem}[Neshitov]
Let $k$ be a field of characteristic $0$. Then the following graded rings are isomorphic:
$$H_0(\ZF(\del, \Gg_m^{\wedge *})) \simeq \K_{\geqslant 0}^{MW}(k),$$
where $\K_{\geqslant 0}^{MW}(k)$ denotes the non-negative part of the Milnor-Witt K-theory of the field $k$. 
\end{theorem}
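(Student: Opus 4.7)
The plan is to construct mutually inverse ring homomorphisms between the two sides, using Morel's generators-and-relations presentation of Milnor-Witt K-theory on one side and a Chow-Witt-theoretic pushforward construction on the other.

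First, I would define $\Phi \colon \K_{\geqslant 0}^{MW}(k) \to H_0(\ZF(\del, \Gg_m^{\wedge *}))$ via Morel's presentation: $\K^{MW}_*(k)$ is generated by symbols $[u]$ in degree $1$ (for $u \in k^\times$) together with $\eta$ in degree $-1$, subject to the Steinberg relation $[u][1-u] = 0$, the $\eta$-commutation $\eta[u] = [u]\eta$, the twisted additivity $[uv] = [u] + [v] + \eta[u][v]$, and $\eta h = 0$ for $h = 2 + \eta[-1]$. I would send $[u]$ to the class of the explicit level-$1$ framed correspondence $\spk \to \Gg_m$ with support $\{0\} \subset \Aa^1_k$, framing $\phi(t) = t$ and constant map $g = u$, and extend to $\eta$-multiples using standard level-$2$ correspondences built from the motivic Hopf element. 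Each of Morel's relations would then be verified by exhibiting an explicit $\Aa^1$-homotopy of framed correspondences.

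In the reverse direction, I would construct $\Psi$ by pushforward in Chow-Witt cohomology. Given a framed correspondence $(U, \phi, g)$ of level $n$ from $\spk$ to $\Gg_m^{\wedge l}$ with support $Z$, take the canonical Thom class of the trivial rank-$n$ bundle in $\Chw^n_{\{0\}}(\Aa^n_k)$, pull it back along $\phi$ to $\Chw^n_Z(U)$, push it forward along the finite projection $Z \to \spk$, and combine with the $\K^{MW}_l$-class associated to $g \colon U \to \Gg_m^{\wedge l}$. This factors through the functor $\alpha \colon \Fr_*(k) \to \Cor_k$ of D\'eglise-Fasel mentioned in the introduction, and composing with the Calm\`es-Fasel isomorphism $H_0(\Cor(\del, \Gg_m^{\wedge *})) \cong \K^{MW}_*(k)$ yields the desired $\Psi$.

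The equality $\Psi \circ \Phi = \id$ is a direct computation on Morel's generators, once one unwinds the Chow-Witt construction applied to the explicit correspondences defining $\Phi$. The hard direction is $\Phi \circ \Psi = \id$: one must show that every framed correspondence $\spk \to \Gg_m^{\wedge l}$ is $\Aa^1$-homotopic to a sum of correspondences lying in the image of $\Phi$. The strategy is a reduction to normal form: use $\Aa^1$-homotopies to deform an arbitrary framed correspondence so that its support is a disjoint union of $k$-rational points and its framing takes a standard shape determined by the values of $g$. This is the main obstacle of the proof, and it is precisely where the characteristic-$0$ assumption enters, through Hironaka's resolution of singularities, which is used to trivialize residue-field extensions at the support; the unavailability of this tool in positive characteristic accounts for the $\chr k = 0$ restriction and explains why the extension to $\chr k \ne 2$ mentioned in the introductory remark required substantial additional work.
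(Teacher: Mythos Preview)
The paper does not give its own proof of this theorem; it is quoted as the main result of Neshitov's paper \cite{NeshitovFramedCorrMW} and used as a black box. So there is no in-paper argument to compare against directly, but one can compare your sketch both to Neshitov's actual argument and to the machinery the present paper develops around it.

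Your construction of the inverse $\Psi$ via the functor $\alpha$ and the Calm\`es--Fasel isomorphism is not Neshitov's route: his proof is independent of that machinery and builds the inverse by hand, essentially assigning to a framed correspondence its $\Aa^1$-Brouwer degree in $\gw(k)$ and extending multiplicatively. Your route is, interestingly, very close to what the present paper does in Lemma~\ref{lm: diag commutes} and diagram~\eqref{diag:injectivity}; but there the paper \emph{assumes} Neshitov's isomorphism $\Psi$ and merely checks its compatibility with $\alpha_*$ and Calm\`es--Fasel's $\Phi$ on generators, rather than re-deriving Neshitov's theorem. Your approach could be made into an alternative proof, trading Neshitov's direct construction for the (also nontrivial) Calm\`es--Fasel computation. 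Note too that your explicit representative for $[u]$ (support $\{0\}$, framing $t$, constant map $u$) differs from Neshitov's $(\Gm, x-a, \id)$ as quoted in Section~\ref{ssec: injectivity}, though both are legitimate choices.

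There is, however, a genuine gap in your sketch of the hard direction. You claim one reduces an arbitrary framed correspondence to one whose support is a disjoint union of $k$-rational points, and that Hironaka's resolution is what makes this possible by ``trivializing residue-field extensions at the support.'' Neither assertion is correct. A framed correspondence from $\spk$ can have support whose reduced scheme is $\Spec L$ for a nontrivial finite extension $L/k$, and no $\Aa^1$-homotopy changes that residue field; resolution of singularities has nothing to do with this (it desingularizes varieties, it does not collapse residue fields of closed points). Neshitov's surjectivity argument instead handles such supports through explicit $\Aa^1$-homotopies manipulating the framing polynomial and multiplicativity/additivity relations in $\ZF$, and the characteristic-zero hypothesis enters through more mundane algebra (separability, behavior of polynomial derivatives, infinite base field) rather than through resolution. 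This part of your outline would need to be reworked before it constitutes a proof.
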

\sssec{} \label{sssec: varepsilon_*}
By the same argument as in~\cite[Section~3]{NeshitovFramedCorrMW}, the product of $\SL$-oriented framed correspondences induces a multiplication on $H_0(\ZF^{\SL}(\del, \Gg_m^{\wedge *}))$. The homomorphism 
$$\varepsilon_* \colon H_0(\ZF(\del, \Gg_m^{\wedge *})) \longrightarrow 
H_0(\ZF^{\SL}(\del, \Gg_m^{\wedge *})) $$
is then a graded ring homomorphism. We refer to  $\varepsilon_*$  as \textit{unit map}, motivated by Proposition~\ref{prop:unit map msl}.

\subsection{Main theorem and applications}

Our main result is the computation of the unit map $\varepsilon_*$.

\begin{theorem}
\label{thm:main}
Let $k$ be a field of characteristic $0$.  Then the unit map $\varepsilon_*$ is a graded ring isomorphism:
\[\varepsilon_* \colon H_0(\ZF(\del, \Gg_m^{\wedge *})) \xrightarrow{\sim} H_0(\ZF^{\SL}(\del, \Gg_m^{\wedge *})).\]
\end{theorem}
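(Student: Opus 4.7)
The plan is to prove surjectivity and injectivity of $\varepsilon_*$ separately, by essentially independent methods: a geometric deformation argument for the former, and a left inverse factoring through Milnor--Witt $K$-theory for the latter.

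For surjectivity, I would produce explicit $\Aa^1$-homotopies of $\SL$-oriented framed correspondences that deform the framing into the distinguished fiber. Given a class represented by $(U, \phi, g) \in \Fr^{\SL}_n(\del, \Gg_m^{\wedge l})$, the framing $\phi$ factors through some $\TAU(n, np) \to \GR(n, np)$. After stabilizing in $p$ and/or suspending in $n$, I would construct an $\Aa^1$-homotopy deforming $\phi$ to a framing whose image is contained in the fiber $\Aa^n \hookrightarrow \TAU_n$ over the distinguished point of $\GR_n$. Concretely, the base map $\bar\phi : U \to \GR(n, np)$ is $\Aa^1$-deformed to the constant at the distinguished point by linear deformations on the $\Gg_m$-torsor $\GR(n, np) \to \Gr(n, np)$, and this path is lifted, via the linear structure on the fibers of $\TAU_n$, to a deformation of $\phi$ itself. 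The resulting correspondence lies in the image of $\varepsilon_*$.

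For injectivity, I would construct a left inverse to $\varepsilon_*$ using the Calm\`es--Fasel category $\Cor_k$ of finite Milnor--Witt correspondences. The D\'eglise--Fasel functor $\alpha : \Fr_*(k) \to \Cor_k$ of~\cite[Proposition~2.1.12]{DF_MWMotComplexes} assigns to $(U, \phi, g)$ the pushforward along $Z \to X$ of the pullback along $\phi$ of the $\Kk_n^{MW}$-Thom class of the trivial bundle $\Aa^n_k$. This construction extends to a functor $\alpha^{\SL} : \Fr^{\SL}_*(k) \to \Cor_k$ by using the $\Kk_n^{MW}$-Thom class of $\TAU_n \to \GR_n$, which is well-defined because the $\SL$-orientation~\eqref{eq:sl-orientation} endows $\TAU_n$ with a canonical Thom class in any $\SL$-oriented cohomology theory, in particular in $\Kk^{MW}$-cohomology. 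Compatibility with composition follows from multiplicativity of these Thom classes under the isomorphism~\eqref{eq:r_n,m}, and the identity $\alpha^{\SL} \circ \E = \alpha$ follows because the Thom class of $\TAU_n$ restricts to that of $\Aa^n_k$ along $\Aa^n \hookrightarrow \TAU_n$. Passing to linear correspondences, applying $\del$, and taking $H_0$ then yields a commutative triangle
$$\xymatrix{
H_0(\ZF(\del, \Gg_m^{\wedge *})) \ar[d]_{\varepsilon_*} \ar[dr]^{\alpha_*} & \\
H_0(\ZF^{\SL}(\del, \Gg_m^{\wedge *})) \ar[r]^-{\alpha^{\SL}_*} & H_0(\Cor(\del, \Gg_m^{\wedge *})) \simeq \K^{MW}_{\geqslant 0}(k), \\
}$$
in which the diagonal $\alpha_*$ is the Calm\`es--Fasel version of Neshitov's isomorphism, factored through $H_0(\Cor(\del, \Gg_m^{\wedge *}))$ via~\cite{CFComparisonMWCohom}. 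Since the composite is an isomorphism, $\varepsilon_*$ is injective.

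That $\varepsilon_*$ is multiplicative is immediate from the fact that $\E$ respects the products $\boxtimes$ of Section~\ref{sssec: external product}. I expect the main obstacle to be the surjectivity argument: producing $\Aa^1$-homotopies of framings while preserving the condition that the support is cut out transversally as the preimage of the zero section of $\TAU_n$ is delicate, since a careless deformation will destroy this transversality. The homotopies must therefore be constructed locally near the support $Z$, using the $\Gg_m$-torsor structure on $\GR_n \to \Gr_n$ together with enough stabilization in $p$ and suspension in $n$ to create room for a regular deformation. A secondary technical point on the injectivity side is verifying multiplicativity of the Thom classes of $\TAU_n$ under~\eqref{eq:r_n,m}, which is required for $\alpha^{\SL}$ to be a functor.
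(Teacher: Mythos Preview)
Your overall architecture matches the paper's: surjectivity by explicit $\Aa^1$-homotopies, injectivity by extending the D\'eglise--Fasel functor $\alpha$ to an $\SL$-version and using it as a left inverse. Two points deserve attention, one on each side.

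On surjectivity, your sketch is too optimistic. You propose to deform $\bar\phi \colon U \to \GR(n,np)$ to the constant map ``by linear deformations on the $\Gg_m$-torsor $\GR(n,np) \to \Gr(n,np)$''. But $\Gg_m$ is not $\Aa^1$-contractible, so there is no naive linear homotopy in the fiber direction; and in the base direction $\Gr(n,np)$ is not affine, so a global linear contraction is not available either. The paper separates the two motions: first it uses the transitive $\SL(L)$-action on $\Gr_{n,L}$ (together with the fact that $\SL$ is generated by elementary matrices, hence $\Aa^1$-connected) to move the image of the support over the distinguished point of $\Gr_n$; then it works in a local trivialization to contract into a single fiber; finally, to move within the fiber direction it exploits the identification $\GR(n,n+1) \simeq \Aa^{n+1}-0$, which is $\Aa^1$-chain connected for $n>0$. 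The last step is the substitute for the ``linear deformation in $\Gg_m$'' that you are missing.

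On injectivity, your triangle is correct, but the assertion that $\alpha_*$ is an isomorphism is not free. Neshitov's isomorphism $\Psi \colon \K^{MW}_{\geqslant 0}(k) \xrightarrow{\sim} H_0(\ZF(\del,\Gg_m^{\wedge *}))$ and the Calm\`es--Fasel isomorphism $\Phi \colon \K^{MW}_{\geqslant 0}(k) \xrightarrow{\sim} H_0(\Cor(\del,\Gg_m^{\wedge *}))$ are constructed independently, and it is not a priori clear that $\alpha_* \circ \Psi = \Phi$. The paper proves this compatibility (its Lemma~\ref{lm: diag commutes}) by an explicit computation on the generators $\langle a \rangle \in \gw(k)$ and $[a] \in \K_1^{MW}(k)$, after first checking that $\alpha_*$ is a ring homomorphism. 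You should flag this verification as part of the work; once it is done, the functoriality of $\alpha^{\SL}$ and the multiplicativity of the Thom classes under~\eqref{eq:r_n,m} that you mention are in fact not needed for the argument, since a left inverse of abelian groups suffices.
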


We will prove Theorem~\ref{thm:main} in the next section. Meanwhile we deduce immediate applications. Being corollaries of Theorem~\ref{thm:main}, our proofs work over fields of characteristic $0$, however Proposition~\ref{prop:homotopy module} holds over an  arbitrary base scheme~\cite[Example~16.34]{BHNorms}, hence so do Corollaries~\ref{cor:chow witt orientation} and~\ref{cor:mw-cohom}. 
 
\sssec{} 
Recall that Voevodsky defined the homotopy $t$-structure on $\SHh(k)$, whose heart $\SHh^{\heartsuit}(k)$ is equivalent to the category of homotopy modules $\Pi_*(k)$ (see~\cite[Section~5.2]{MorelIntroTrieste}). A \textit{homotopy module} is a sequence of strictly  $\Aa^1$-invariant Nisnevich sheaves of abelian groups $\{\E_i\}_{i \in \Z}$ with isomorphisms $\E_i \xrightarrow{\sim} (\E_{i+1})_{-1}$, and a morphism of homotopy modules is a sequence  of maps of sheaves, compatible with the isomorphisms. Here $\E_{-1}$ denotes the \textit{contraction} of $\E$:
$ \E_{-1}(X) = \Coker(\E(X) \xrightarrow{i_*} \E(X \times \Gg_m)), $
where $i$ is the embedding at $1 \in \Gg_m$.
The functor
$\SHh(k) \longrightarrow  \Pi_*(k)$
that sends $E$ to $\underline{\pi}_0(E)_*$
induces an equivalence after restriction to $\SHh^{\heartsuit}(k)$. Its quasi-inverse functor is denoted by $\Hrm \colon \Pi_*(k) \rightarrow  \SHh^{\heartsuit}(k)$.

\begin{prop}
\label{prop:homotopy module}
Let $k$ be a field of characteristic $0$. Then the unit map 
$e \colon \unit \to \msl$ induces an isomorphism of homotopy modules:
\begin{equation*}
e_* \colon \underline{\pi}_0(\unit)_* \xrightarrow{\sim} \underline{\pi}_0(\msl)_*.
\end{equation*}
\end{prop}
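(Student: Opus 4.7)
The plan is to deduce the isomorphism of homotopy modules from the isomorphism at the level of sections over $\spk$ already established in Theorem~\ref{thm:main}, by combining three ingredients: the geometric description of the unit map provided by Proposition~\ref{prop:unit map msl}, base change to arbitrary finitely generated field extensions of $k$, and the fact that homotopy modules are controlled by their values on fields.

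First, I would establish that for every finitely generated field extension $L/k$ and every $l \geqslant 0$, the induced map $e_* \colon \underline{\pi}_0(\unit)_l(L) \to \underline{\pi}_0(\msl)_l(L)$ is an isomorphism. Since $\chr k = 0$, every such $L$ is perfect and of characteristic $0$, so Proposition~\ref{prop:unit map msl} applied over the base $L$ identifies this map with
\[
\varepsilon_* \colon H_0(\ZF(\Delta^{\bullet}_L, \Gg_m^{\wedge l})) \longrightarrow H_0(\ZF^{\SL}(\Delta^{\bullet}_L, \Gg_m^{\wedge l})),
\]
which is an isomorphism by Theorem~\ref{thm:main} applied to $L$. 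A small but necessary check here is that the geometric model computing $\underline{\pi}_0(\E)_l(L)$ is stable under base change $\spec L \to \spk$ for $\E \in \{\unit, \msl\}$, so that the intrinsic value of the sheaf at $\spl$ agrees with the framed model over $L$; this is standard, using that both $\unit$ and $\msl$ are built by base change from absolute constructions and that the proof of Theorem~\ref{thm:map space thom} is natural in the base.

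Next, I invoke Morel's theorem that strictly $\Aa^1$-invariant Nisnevich sheaves of abelian groups on $\smk$ are unramified, and in particular are determined, as sheaves, by their values on finitely generated field extensions of $k$ (together with the restriction maps). Applied to the map of sheaves $e_* \colon \underline{\pi}_0(\unit)_l \to \underline{\pi}_0(\msl)_l$ (both members of homotopy modules, hence strictly $\Aa^1$-invariant), the pointwise isomorphism on finitely generated extensions forces $e_*$ to be an isomorphism of Nisnevich sheaves for every $l \geqslant 0$.

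Finally, to pass from $l \geqslant 0$ to arbitrary $l \in \Z$, I use the defining structure of a homotopy module: the isomorphism $\E_l \xrightarrow{\sim} (\E_{l+1})_{-1}$ expresses the negative graded pieces as iterated contractions of any single non-negative piece. Contraction is a functor on sheaves, and the map $e_*$ is a morphism of homotopy modules, so the commuting squares
\[
\xymatrix{
\underline{\pi}_0(\unit)_l \ar[r]^-{e_*} \ar[d]_{\sim} & \underline{\pi}_0(\msl)_l \ar[d]^{\sim} \\
(\underline{\pi}_0(\unit)_{l+1})_{-1} \ar[r]^-{(e_*)_{-1}} & (\underline{\pi}_0(\msl)_{l+1})_{-1}
}
\]
let us propagate the isomorphism inductively from $l = 0$ to all negative $l$. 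Conceptually the one real step is Theorem~\ref{thm:main} itself; the main obstacle in this corollary is the base-change compatibility in the first paragraph, which amounts to checking that the identifications of Proposition~\ref{prop:unit map msl} are natural in extensions of the base field; once that is observed, Morel's unramifiedness and the contraction structure of homotopy modules finish the argument mechanically.
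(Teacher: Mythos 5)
Your proposal is correct and follows essentially the same route as the paper: reduce to sections over finitely generated field extensions $L/k$ via Proposition~\ref{prop:unit map msl} and Theorem~\ref{thm:main} applied over $L$, handle the base-change compatibility using that $\unit$ and $\msl$ are absolute spectra, conclude the sheaf-level isomorphism for $l\geqslant 0$ from unramifiedness of strictly $\Aa^1$-invariant sheaves, and propagate to negative $l$ via the contraction isomorphisms. The only difference is one of detail: the paper spells out the base-change step you flag as ``standard'' by writing $\spl$ as a cofiltered limit of smooth $k$-schemes and invoking continuity of $\SH$.
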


\begin{proof}
As follows from Proposition~\ref{prop:unit map msl} together with Theorem~\ref{thm:main}, for any finitely generated field extension $L/k$ and $l \geqslant 0$ the unit map induces an isomorphism
$$e_l(L) \colon \underline{\pi}_0(\unit_k)_l(L) \simeq [\mathds{1}_L, \Sigma_{\Gg_m}^l \mathds{1}_L]_{\SHh(L)} 
\xrightarrow{\sim}  [\mathds{1}_L, \Sigma_{\Gg_m}^l \msl_L]_{\SHh(L)} 
\simeq \underline{\pi}_0(\msl_k)_l(L).$$
The first and last isomorphisms follow from the fact that (suspended) spectra $\unit$ and $\msl$ are \textit{absolute} in the sense of~\cite[Definition~1.2.1]{DegliseOrientation}. Since $p \colon \spl \to \spk$ is an essentially smooth $k$-scheme, one can express it as a cofiltered limit of smooth $k$-schemes $p_{\alpha} \colon X_{\alpha} \to \spk$.
Then for any absolute spectrum $E$ one has:
\begin{multline*}
[\mathds{1}_L, E_L]_{\SHh(L)} \simeq  [\mathds{1}_L, p^*(E_k)]_{\SHh(L)} \simeq \colim_{\alpha} [\mathds{1}_{X_{\alpha}}, p_{\alpha}^*(E_k)]_{\SHh(X_{\alpha})} \simeq \\
\colim_{\alpha} [p_{\alpha, \sharp}(\mathds{1}_{X_{\alpha}}), E_k]_{\SHh(k)} = \colim_{\alpha} \pi_0(E_k)_0(X_{\alpha}) \simeq
\pi_0(E_k)_0(L) = \underline{\pi}_0(E_k)_0(L).
\end{multline*}
Here the second isomorphism is the content of~\cite[Lemma~A.7(1)]{HoyoisAlgCobordism}, and the rest follows from definitions.

Since $\SHh^{\heartsuit}(k)$ is an abelian category, the maps $e_l$ of strictly $\Aa^1$-invariant Nisnevich sheaves have kernels and cokernels which are also strictly $\Aa^1$-invariant sheaves, hence  unramified~\cite[Example~2.3]{MorelA1-alg-top}. In case $l \geqslant 0$, we have shown that $\Ker e_l(L) = \Coker e_l (L) = 0$ for all finitely generated field extensions $L/k$, which implies that $\Ker e_l$ and $\Coker e_l$ are zero sheaves. Hence $e_*$ is an isomorphism of the sheaves $\underline{\pi}_0(-)_l$, for $l \geqslant 0$. 

Finally, by definition of a morphism of homotopy modules, $e_*$ is compatible with contraction isomorphisms, so the fact that $e_l$ are isomorphisms for all $l \geqslant 0$ implies that $e_*$ is an isomorphism on each level $l \in \Z$.
\end{proof}

\sssec{} 
Recall that a \textit{special linear orientation} of a bigraded ring cohomology theory on the category $\smk$ is an extra structure that encodes the data of natural multiplicative Thom isomorphisms for vector bundles with trivialized determinants over smooth schemes (see~\cite[Definition~5.1]{PW-MSLMSp}). A homomorphism of commutative monoids $\msl \to A$ in $\SHh(k)$ induces a special linear orientation of the cohomology theory $A^{*,*}$~\cite[Theorem~5.5]{PW-MSLMSp}. We call such homomorphism an $\SL$-orientation of the ring spectrum $A$.

\begin{cor}
\label{cor:chow witt orientation}
The bigraded ring cohomology theory $H^*(-, \Kk^{MW}_*)$ carries a unique special linear orientation. In this sense, Chow-Witt groups are uniquely specially linearly oriented.
\end{cor}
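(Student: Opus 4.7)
The plan is to use representability of Chow--Witt cohomology together with Proposition~\ref{prop:homotopy module} to reduce the existence and uniqueness of an $\SL$-orientation to the uniqueness of the ring unit of the representing spectrum. First, I would recall that the bigraded ring cohomology theory $H^*(-, \Kk^{MW}_*)$ is represented by the Eilenberg--MacLane spectrum $A = \Hrm(\Kk^{MW}_*) \in \SHh^{\heartsuit}(k)$ with its canonical commutative ring structure; by~\cite[Theorem~5.9]{PW-MSLMSp}, special linear orientations of this theory are in natural bijection with commutative monoid maps $\msl \to A$ in $\SHh(k)$. So it suffices to show that this set is a singleton.

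Next I would exploit that $A$ lies in the heart of the homotopy $t$-structure while $\msl$ and $\unit$ are connective: the truncation adjunction provides a natural identification
\[
\Hom_{\SHh(k)}(\msl, A) \;\simeq\; \Hom_{\Pi_*(k)}\bigl(\underline{\pi}_0(\msl)_*,\, \Kk^{MW}_*\bigr),
\]
and likewise with $\unit$ in place of $\msl$. By Proposition~\ref{prop:homotopy module}, the unit $e\colon \unit \to \msl$ induces an isomorphism $\underline{\pi}_0(\unit)_* \xrightarrow{\sim} \underline{\pi}_0(\msl)_*$, so precomposition with $e$ yields a bijection between the two hom-sets. I would then argue that this bijection restricts to a bijection on commutative monoid maps, using that $e$ is itself a ring map and that $\tau_{\leq 0}$ is symmetric monoidal on the connective subcategory of $\SH(k)$.

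Finally, since $\unit$ is the tensor unit of $\SHh(k)$, the unique commutative monoid map $\unit \to A$ is the ring unit of $A$. Transporting along the bijection above produces a unique commutative monoid map $\msl \to A$, establishing both existence and uniqueness of the $\SL$-orientation of $H^*(-, \Kk^{MW}_*)$.

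The step I expect to demand the most care is the assertion that the hom-bijection respects the commutative monoid structure. Concretely, this amounts to verifying that the isomorphism $\underline{\pi}_0(\unit)_* \simeq \underline{\pi}_0(\msl)_*$ is not merely an isomorphism of homotopy modules but of ring objects therein, which follows from Proposition~\ref{prop:homotopy module} since that isomorphism is induced by the ring homomorphism $e$; the remaining compatibility with ring structures then reduces to the standard lax symmetric monoidality of the Postnikov truncation $\tau_{\leq 0}$ on connective spectra.
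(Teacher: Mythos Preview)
Your proposal is correct and follows essentially the same route as the paper. The paper identifies the representing spectrum as $\Hrm\,\underline{\pi}_0(\unit)_*$, produces the $\SL$-orientation as the composite $\msl \to \Hrm\,\underline{\pi}_0(\msl)_* \xrightarrow{e_*^{-1}} \Hrm\,\underline{\pi}_0(\unit)_*$, and argues uniqueness by observing that any monoid map $\msl \to \Hrm\,\underline{\pi}_0(\unit)_*$ factors through $\Hrm\,\underline{\pi}_0(\msl)_*$ and is forced by compatibility with the unit of $\msl$; your version packages the same two ingredients (factorization through the heart and Proposition~\ref{prop:homotopy module}) as a bijection of hom-sets transported along $e$, which is just a more abstract phrasing of the identical argument.
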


\begin{proof}
The cohomology theory $H^*(-, \Kk^{MW}_*)$ is represented by the spectrum ${\Hrm \underline{\pi}_0(\unit)_* \in \SHh(k)^{\heartsuit}}$ in $\SHh(k)$. The sequence of ring homomorphisms
$$\msl \xrightarrow{\underline{\pi}_0} \Hrm \underline{\pi}_0(\msl)_* \xrightarrow{e_*^{-1}} \Hrm \underline{\pi}_0(\unit)_*$$
provides an $\SL$-orientation of the spectrum $\Hrm \underline{\pi}_0(\unit)_*$, and hence of the bigraded cohomology theory $H^*(-, \Kk^{MW}_*)$. Since any map of commutative monoids $\msl \to \Hrm \underline{\pi}_0(\unit)_*$ factors through $\Hrm \underline{\pi}_0(\msl)_*$ and is compatible with the unit map of $\msl$, this $\SL$-orientation is unique.
\end{proof}

\begin{rem}
The system of compatible Thom isomorphisms for $H^*(-, \Kk^{MW}_*)$ was constructed in~\cite[Theorem~4.2.7]{AHpi_0}.
\end{rem}

\begin{cor}
\label{cor:mw-cohom}
The spectrum $\hz$, representing MW-motivic cohomology, is uniquely $\SL$-oriented.
\end{cor}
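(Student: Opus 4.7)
My plan is to reduce Corollary~\ref{cor:mw-cohom} to Corollary~\ref{cor:chow witt orientation} by identifying $\hz$ with $\Hrm \underline{\pi}_0(\unit_k)_*$ inside the heart $\SHh^{\heartsuit}(k)$. First I would invoke Morel's theorem, which over the perfect field $k$ yields $\underline{\pi}_0(\unit_k)_* \simeq \Kk^{MW}_*$, together with the fact that the spectrum $\hz$ representing MW-motivic cohomology lies in the heart of the homotopy $t$-structure, with underlying homotopy module $\underline{\pi}_0(\hz)_* \simeq \Kk^{MW}_*$. Since $\Hrm$ is a quasi-inverse to $\underline{\pi}_0$ on $\SHh^{\heartsuit}(k)$ and the equivalence $\SHh^{\heartsuit}(k) \simeq \Pi_*(k)$ transports ring structures, these two inputs combine to give a canonical equivalence of commutative ring spectra $\hz \simeq \Hrm \underline{\pi}_0(\unit_k)_*$ in $\SHh(k)$.

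With this identification I would apply the argument of Corollary~\ref{cor:chow witt orientation} verbatim. The composite of ring homomorphisms
\[
\msl \longrightarrow \Hrm \underline{\pi}_0(\msl)_* \xrightarrow{e_*^{-1}} \Hrm \underline{\pi}_0(\unit_k)_* \simeq \hz,
\]
where the first arrow is the canonical truncation to the heart and $e_*^{-1}$ is the inverse of the isomorphism of homotopy modules from Proposition~\ref{prop:homotopy module}, is a map of commutative monoids in $\SHh(k)$ and therefore endows $\hz$ with an $\SL$-orientation. For uniqueness, any map of commutative monoids $\msl \to \hz$ in $\SHh(k)$ factors through the truncation $\Hrm \underline{\pi}_0(\msl)_*$ (because $\hz$ sits in the heart), and compatibility with the unit of $\msl$ together with Proposition~\ref{prop:homotopy module} forces the induced map $\Hrm \underline{\pi}_0(\msl)_* \to \Hrm \underline{\pi}_0(\unit_k)_*$ to coincide with $e_*^{-1}$.

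The main obstacle is the identification $\hz \simeq \Hrm \underline{\pi}_0(\unit_k)_*$: this is the only genuinely external input beyond the contents of the paper and relies on Morel's computation of $\underline{\pi}_0(\unit_k)_*$ combined with the known representability of MW-motivic cohomology by a spectrum in the heart. Once that identification is granted, the rest of the argument is a purely formal reprise of the previous corollary.
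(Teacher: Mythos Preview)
Your argument rests on the identification $\hz \simeq \Hrm\,\underline{\pi}_0(\unit_k)_*$, and this is where it breaks down: that identification is false. The spectrum $\Hrm\,\underline{\pi}_0(\unit_k)_*$ is the object of $\SHh^{\heartsuit}(k)$ representing the Chow--Witt groups $H^*(-,\Kk^{MW}_*)$, which is exactly what Corollary~\ref{cor:chow witt orientation} is about; if $\hz$ coincided with it, Corollary~\ref{cor:mw-cohom} would be a restatement of Corollary~\ref{cor:chow witt orientation} rather than a separate result. Concretely, MW-motivic cohomology and cohomology with coefficients in $\Kk^{MW}_*$ agree on the diagonal $H^{n,n}$ but differ off it (just as ordinary motivic cohomology $H\Z$ differs from $\Hrm\,\Kk^{M}_*$: for instance $H^{0,1}(\spk,\Z)=0$ while $H^0(\spk,\Kk^{M}_1)=k^{\times}$). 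In particular $\hz$ does not lie in the heart of the homotopy $t$-structure, so your factorization-through-the-heart argument for uniqueness also fails.

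The paper's proof uses a different truncation. By~\cite[Theorem~5.2]{BFEffectivity} one has $\hz \simeq \tau^{\eff}_{\leqslant 0}(\unit)$, the truncation for the \emph{effective} homotopy $t$-structure. Since $\msl$ is effective, the unit map induces $e_* \colon \tau^{\eff}_{\leqslant 0}(\unit) \to \tau^{\eff}_{\leqslant 0}(\msl)$, and a criterion of Bachmann reduces checking that this is an equivalence to checking it on $\underline{\pi}_*(-)_0$; connectivity of $\unit$ and $\msl$ then reduces further to $\underline{\pi}_0(-)_0$, where Proposition~\ref{prop:homotopy module} applies. The $\SL$-orientation is $\msl \to \tau^{\eff}_{\leqslant 0}(\msl) \xrightarrow{e_*^{-1}} \hz$, and uniqueness follows because any monoid map $\msl \to \hz$ factors through $\tau^{\eff}_{\leqslant 0}(\msl)$ (as $\msl$ is effective and $\hz \in \SHh(k)^{\eff}_{\leqslant 0}$). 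Your overall strategy is right in spirit---truncate, invert $e_*$, argue uniqueness via the universal property of the truncation---but the correct $t$-structure is the effective one, not Morel's.
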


\begin{proof}
By~\cite[Theorem~5.2]{BFEffectivity}, $\hz \simeq \tau^{\eff}_{\leqslant 0}(\unit)$, where the right-hand side denotes the image of $\unit$ in $\SHh(k)^{\eff}_{\leqslant 0}$.
Since $\msl$ is an effective spectrum, the unit map of $\msl$ induces a morphism
$$e_* \colon \tau^{\eff}_{\leqslant 0}(\unit) \longrightarrow \tau^{\eff}_{\leqslant 0}(\msl),$$
which, as we claim, is an equivalence in $\SHh(k)$.
Indeed, by~\cite[Proposition~4.(1)]{BachmannSlices} it is enough to check that $e_*$ induces an isomorphism of $\underline{\pi}_*(-)_0$. 
But both $\unit$ and $\msl$ belong to $\SHh(k)_{\geqslant 0}$, so the only of these sheaves of homotopy groups that survive after applying the functor $\tau^{\eff}_{\leqslant 0}$ are $\underline{\pi}_0(-)_0$. By Proposition~\ref{prop:homotopy module}, $e_*$ induces an isomorphism of $\underline{\pi}_0(-)_0$.

Hence the unique $\SL$-orientation of $\hz$ is given by the following sequence of ring homomorphisms:
$$\msl \xrightarrow{\tau^{\eff}_{\leqslant 0}} \tau^{\eff}_{\leqslant 0}(\msl) \xrightarrow{e_*^{-1}} \tau^{\eff}_{\leqslant 0}(\unit) \simeq \hz.$$
\end{proof}

\section{Computation of the unit map}

In this section, we prove that the unit map $\varepsilon_* \colon H_0(\ZF(\del, \Gg_m^{\wedge *})) \to H_0(\ZF^{\SL}(\del, \Gg_m^{\wedge *}))$ is an isomorphism in characteristic $0$. To prove surjectivity, we construct explicit $\Aa^1$-homotopies between framed correspondences. To proving injectivity, we employ the computation of $H_0(\ZF(\del, \Gg_m^{\wedge *}))$ by Neshitov~\cite{NeshitovFramedCorrMW} and the theory of Milnor-Witt correspondences of Calm\`es and Fasel~\cite{CFFiniteMWCor}.

\subsection{Surjectivity of the unit map $\varepsilon_*$}
\label{ssec: surjectivity}
\begin{notation}
In this subsection we will use the following abbreviations.
\begin{itemize}
\item $L / k$ is a finite field extension.
\item $s \in X(L)$ and the corresponding $L$-rational point of $X_L = X \times \spl$ are denoted the same way, for $X \in \smk$.
\item $c \sim c'$ denotes equality of classes of $\SL$-oriented linear framed correspondences $c$ and $c'$ in $H_0(\ZF^{\SL}(\del, Y))$.
\end{itemize}
\end{notation}

\sssec{} Fix a smooth $k$-scheme $Y$. We will show that for any $c \in \ZF^{\SL}_n(\spk, Y)$ there is $c' \in \ZF_n(\spk, Y)$ such that $c \sim \varepsilon(c')$ in $H_0(\ZF^{\SL}(\del, Y))$. This result for $Y = \Gg_m^l$ for all $l \geqslant 0$ implies the surjectivity of the unit map $\varepsilon_*$.

We can assume that $c$ is represented by an $\SL$-oriented framed correspondence with a connected support. That is, 
$\supp(c)_{\red} = \spl$, where $L$ is some finite extension of $k$. We can also assume that $c$ is of level $n>0$. We will use the following preliminary lemmas, analogous to~\cite[Section~2]{NeshitovFramedCorrMW}.

\begin{lm}
\label{lm:projection}
Let $c = (U, \phi, g)$ be a correspondence in $Fr_n^{\SL}(\spk, Y)$ with support $Z$ such that $Z_{\red} = \spl$. Then one can refine $U$ to $U'$, an \'etale neighborhood of $Z$ such that there is a projection $U' \to\mathrm{Spec} \, L$.
\end{lm}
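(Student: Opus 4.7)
The plan is to construct $U'$ by base-changing $U$ along the finite \'etale cover $\spl \to \spk$ and then shrinking to an open subscheme that projects as an open immersion back to $U$. Since $k$ is perfect, $L/k$ is separable, so the projection $\pi \colon U \times_k \spl \to U$ is finite \'etale of degree $[L:k]$. The hypothesis $Z_{\red} = \spl$ endows the unique closed point of $Z$ with residue field $L$, yielding a canonical structure morphism $f \colon Z \to \spl$. Combining $f$ with the closed immersion $Z \hookrightarrow U$ produces a section $\widetilde{f} = (Z \hookrightarrow U, f) \colon Z \to U \times_k \spl$ of $\pi$ defined on $Z$.

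Let $Z' \subset U \times_k \spl$ denote the scheme-theoretic image of $\widetilde{f}$. Since $\pi$ is separated and $\pi \circ \widetilde{f}$ is a closed immersion, $\widetilde{f}$ is itself a closed immersion, so $Z'$ is closed and maps isomorphically to $Z$ under $\pi$. In particular, every point of $Z'$ has trivial residue field extension over its image in $U$. Applying the standard fact that an \'etale morphism is Zariski-locally an open immersion at any point where the residue field extension is trivial, and observing that $|Z'|$ consists of the single point $z'$ lying above the unique point of $Z$ (so that every open neighborhood of $z'$ already contains $Z'$ set-theoretically), I obtain an open neighborhood $V \subset U \times_k \spl$ of $Z'$ such that $\pi|_V \colon V \to U$ is an open immersion.

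Setting $U' := V$ and identifying it as an open subscheme of $U$ via $\pi|_V$, the composition $U' \xrightarrow{\sim} V \hookrightarrow U \times_k \spl \to \spl$ gives the desired projection to $\spl$. Because $U' \subset U$ is open and $U \to \Aa^n_k$ is \'etale, $U'$ is automatically an \'etale neighborhood of $Z$ refining $U$, and the closed immersion $Z \hookrightarrow U'$ exists because $|Z|$ is contained in $|U'|$. The main conceptual point is the existence of the canonical map $f \colon Z \to \spl$, which relies essentially on the hypothesis $Z_{\red} = \spl$ (a single $L$-point, not a disjoint union of closed points with varying residue fields); beyond this, the argument is a routine application of \'etale-local structure theory, and I do not anticipate any serious obstacle.
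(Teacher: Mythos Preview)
Your overall strategy (base-change along $\spl \to \spk$ and lift $Z$) is sound, but the key step is wrong as stated: the ``standard fact that an \'etale morphism is Zariski-locally an open immersion at any point where the residue field extension is trivial'' is false. The squaring map $\Gm \to \Gm$ (characteristic $\ne 2$) is \'etale with trivial residue extension at $1 \mapsto 1$, yet no Zariski open neighborhood of $1$ maps injectively, since any such neighborhood is cofinite and hence contains a pair $\{a,-a\}$. In your situation $U$ may well be irreducible with function field not containing $L$ (for instance $U = \Aa^n_k$ itself), so $U \times_k \spl$ is irreducible of degree $[L:k] > 1$ over $U$ and no open piece is an open immersion. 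A smaller issue: the map $f \colon Z \to \spl$ does not come for free from $Z_{\red} = \spl$; you need a coefficient field $L \hookrightarrow \Oo_Z$, which exists because $\Oo_Z$ is Artinian local and $L/k$ is separable, but it is not canonical.

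The repair is that you do not need an open immersion at all: a refinement of an \'etale neighborhood is just another \'etale neighborhood mapping to it, not necessarily an open subscheme. Since $\pi^{-1}(Z) = Z \times_k \spl$ is Artinian, your $Z'$ is an open and closed component of it; setting $U' = (U \times_k \spl) \smallsetminus (\pi^{-1}(Z) \smallsetminus Z')$ gives $(\pi|_{U'})^{-1}(Z) = Z' \cong Z$, so $U' \to U \to \Aa^n_k$ is an \'etale neighborhood of $Z$ refining $U$, with the evident projection to $\spl$. The paper's two-line argument is the same idea run through the henselization: $(\Aa^n_k)^h_Z$ is henselian with residue field $L$, hence maps to the \'etale neighborhood $\Aa^n_L$ of $Z$ and thus to $\spl$, and this map descends to some finite-level \'etale neighborhood.
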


\begin{proof}
It is enough to show that there is a projection from the henselization  $(\Aa^n_k)^h_Z$, so we can assume $Z = \spl$. 
Since $L/k$ is a separable field extension, the projection $\Aa^n_L \to \Aa^n_k$ is an \'etale neighborhood of $Z$, so we can consider the composition of projections: $(\Aa^n_k)^h_Z \to \Aa^n_L \to \spl$.
\end{proof}

\begin{lm}
\label{lm:sl}
Let $c = (U, \phi, g)$ be a correspondence in $Fr_n^{\SL}(\spk, Y)$. Assume there is a map $h \colon U \to \spl$. Let $A \in \SL(L) = \col_i \SL_i(L)$ and assume there is given an action of $\SL(L)$ on $\TAU_{n, L}$, that induces an endomorphism of the zero section. Denote by $A \cdot \phi$ the composition $U \xrightarrow{\phi \times h} \TAU_{n, L} \xrightarrow{A} \TAU_{n, L} \xrightarrow{pr} \TAU_n$.
Then $c \sim c' = (U, A \cdot \phi, g)$.
\end{lm}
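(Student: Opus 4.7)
The strategy exploits the $\Aa^1$-connectedness of $\SL$: any $A \in \SL_r(L)$ is a product of elementary matrices $E_{ij}(a) = I + a e_{ij}$, and each such matrix sits in an explicit $\Aa^1$-family $E_{ij}(ta)$ interpolating between itself and the identity. This produces an honest $\Aa^1$-homotopy of $\SL$-oriented framed correspondences connecting $c$ to $c'$.

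Concretely, I would proceed in three steps. First, by multiplicativity of the action and transitivity of $\sim$ (if $c \sim c_1$ via $B_1$ and $c_1 \sim c_2$ via $B_2$, then $c \sim c_2$ via $B_2 B_1$ applied to $\phi$), we reduce to the case where $A = E_{ij}(a)$ is a single elementary matrix. Second, observe that the polynomial family $A(t) = E_{ij}(ta)$ defines an $L$-morphism $\Aa^1_L \to \SL_L$ with $A(0) = I$ and $A(1) = A$, and each $A(t)$ preserves the zero section of $\TAU_{n,L}$ by the standing hypothesis on the action. Third, define the homotopy correspondence $H \in \Fr_n^{\SL}(\Aa^1, Y)$ by $H = (U \times \Aa^1, \tilde{\phi}, g \circ \pr_U)$, where $\tilde{\phi} \colon U \times \Aa^1 \to \TAU_n$ is the composition
\[ U \times_k \Aa^1 \;=\; U \times_L \Aa^1_L \;\xrightarrow{(\phi, h) \times \id}\; \TAU_{n, L} \times_L \Aa^1_L \;\xrightarrow{A(t)}\; \TAU_{n,L} \;\to\; \TAU_n. \]
Because $A(t)$ preserves the zero section for every value of $t$, the preimage of the zero section of $\TAU_n$ under $\tilde{\phi}$ equals $Z \times \Aa^1$, which is finite over $\Aa^1$; hence $H$ is a well-defined $\SL$-oriented framed correspondence of level $n$ from $\Aa^1$ to $Y$, with $i_0^* H = c$ and $i_1^* H = c'$. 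Passing to $H_0(\ZF^{\SL}(\del, Y))$, the class of $c - c'$ lies in the image of $i_0^* - i_1^*$, so $c \sim c'$.

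The only technical point is verifying that $\tilde{\phi}$ is a morphism of $k$-schemes: this is immediate, since $A(t)$ depends polynomially on $t$, the action of $\SL(L)$ on $\TAU_{n, L}$ is by hypothesis a scheme-theoretic morphism $\SL_L \times \TAU_{n, L} \to \TAU_{n, L}$, and the map $h$ provides the $L$-scheme structure on $U$ needed for the fiber product identification on the left. I anticipate no serious obstacle; the argument parallels the analogous $\Aa^1$-homotopy manipulations in Neshitov's work~\cite{NeshitovFramedCorrMW} and in Section~\ref{ssec: surjectivity} above, with the zero-section preservation assumption playing the crucial role of guaranteeing finiteness of the support of $H$ throughout the interpolation.
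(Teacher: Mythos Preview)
Your proposal is correct and follows essentially the same approach as the paper: both use that $\SL(L)$ is generated by elementary matrices to produce an $\Aa^1$-path from the identity to $A$, and then build the homotopy $d = (U \times \Aa^1, H(t)\cdot\phi, g\circ\pr_U)$ with support $Z \times \Aa^1$. The only cosmetic difference is that the paper packages the product of elementary paths into a single homotopy $H(t)\colon \Aa^1 \to \SL$ rather than invoking transitivity of $\sim$ one factor at a time.
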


\begin{proof}
The group $\SL(L)$ is generated by elementary matrices,
hence there is an homotopy $H(t) \colon \Aa^1 \to \SL$ such that $H(1) = A$ and $H(0) = E$ is the identity matrix. 
The data $d = (U \times \Aa^1, H(t) \cdot \phi, g \circ \pr_U)$ define a correspondence in $Fr_n^{\SL}(\Aa^1, Y)$, because its support $Z \times \Aa^1$ is finite over $\Aa^1$.
Since $i_0^*(d) = c$ and $i_1^*(d) = (U, A \cdot \phi, g)$, the lemma follows.
\end{proof}

\begin{prop}
\label{prop:epi}
For $n > 0$ let $c = (U, \phi, g) \in \Fr_n^{\SL}(\spk, Y)$ be a correspondence with support $Z$ such that $Z_{\red} = \spl$.
 Then there is $c' \in \Fr_n(\spk, Y)$ such that $c \sim \varepsilon(c')$.
\end{prop}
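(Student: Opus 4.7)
The plan is to deform the $\SL$-oriented framing $\phi$ to one that factors through the distinguished fiber $\Aa^n \hookrightarrow \TAU_n$, via two kinds of $\Aa^1$-homotopies coming from $\SL$-actions.

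First, I apply Lemma~\ref{lm:projection} to refine $U$ so that it projects to $\spl$. Composing $\phi$ with $\TAU_n \to \GR_n$, the restriction $\bar\phi|_Z$ picks out an $L$-point $q \in \GR_n(L)$. Because $\SL_{np}(L)$ acts transitively on $\GR(n,np)(L)$ for $p$ large enough (one first moves the subspace to the distinguished one by a block upper-triangular unimodular element, then rescales the volume form by an $\SL_2$-diagonal element of the form $\mathrm{diag}(\lambda^{-1},\lambda,1,\dots,1)$), we can pick $A \in \SL(L)$ with $A\cdot q = p_0$. Then Lemma~\ref{lm:sl} yields $c \sim c_1 = (U, A\cdot\phi, g)$, and now $\bar\phi_1(Z) = p_0$.

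Next, since $p_0$ lies in the preimage $V_0 \times \Gm \subset \GR_n$ of the big cell $V_0 \subset \Gr_n$, and since $\bar\phi_1(Z)=p_0$ is a single closed point, I may shrink $U$ to an \'etale neighborhood $U'$ of $Z$ so that $\bar\phi_1(U') \subset V_0\times \Gm$. On $U'$ I write $\bar\phi_1 = (V(u),\lambda(u))$ in these coordinates (with $\lambda \in \Oo(U')^{\times}$ because the image is in $\Gm$), and I lift this to an explicit matrix-valued function $A \colon U' \to \SL_{np}$ by the same recipe as above, now parametrized by $u$, so that $A(u)\cdot \bar\phi_1(u) = p_0$ for every $u\in U'$. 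Each factor of $A(u)$ is a product of elementary matrices with entries in $\Oo(U')$, and each elementary matrix $I + x(u)\, e_{ij}$ can be homotoped to $I$ by $t\mapsto I + tx(u)\, e_{ij}$. Concatenating these linear homotopies produces an algebraic $H \colon U'\times \Aa^1 \to \SL_{np}$ with $H(u,0) = I$ and $H(u,1) = A(u)$.

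Now I form the $\SL$-oriented framed correspondence $d = (U'\times \Aa^1, H\cdot (\phi_1\times h), g\circ\pr_{U'}) \in \Fr^{\SL}_n(\Aa^1, Y)$, applying $H(u,t)$ pointwise via the action of $\SL$ on $\TAU_{n,L}$. Since each $H(u,t)$ is invertible, the support of $d$ equals $\phi_1^{-1}(z(\GR_n)) \times \Aa^1 = Z\times \Aa^1$, which is finite over $\Aa^1$; so $d$ is a well-defined $\SL$-oriented framed correspondence. At $t=0$ we recover $c_1$, while at $t=1$ the base component $\bar{}\,(H(u,1)\cdot \phi_1(u))$ is the constant $p_0$, so the framing factors through $\Aa^n \hookrightarrow \TAU_n$; this produces $c' \in \Fr_n(\spk, Y)$ with $\varepsilon(c') = i_1^*d$. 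Combining with the equivalence from Step~2 gives $c \sim c_1 \sim \varepsilon(c')$ in $H_0(\ZF^{\SL}(\del, Y))$, as required.

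The main obstacle lies in the second homotopy: a naive linear contraction of the $\Gm$-coordinate in $V_0 \times \Gm$ to the point $1$ hits $0$ and is not available, so the deformation of $\bar\phi_1$ to the constant $p_0$ cannot be carried out inside the base $\GR_n$. The key trick is to realize the contraction not as a path in $\GR_n$ but as a path in $\SL_{np}$, exploiting that $\SL_{np}$ is $\Aa^1$-connected through elementary matrices and that the orbit map $\SL_{np}\to \GR_n$ admits an explicit algebraic section near $p_0$; this converts a nonexistent contraction of $V_0\times \Gm$ into an honest $\Aa^1$-homotopy of framings.
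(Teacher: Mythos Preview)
Your argument is correct (up to cosmetic slips: the matrix moving an arbitrary subspace to the standard one need not be block-triangular, and for $n\geqslant 2$ the rescaling element should have its nontrivial entries in one coordinate inside and one outside the distinguished subspace, e.g.\ $\mathrm{diag}(\lambda^{-1},1,\dots,1,\lambda)$). The use of $h$ in the second homotopy is harmless but unnecessary: the action of the group scheme $\SL_{np}$ on $\TAU(n,np)$ is already defined over $k$, so $(u,t)\mapsto H(u,t)\cdot\phi_1(u)$ makes sense without passing to $\TAU_{n,L}$.

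Your route genuinely differs from the paper's. The paper proceeds in three steps: it first uses a constant $\SL(L)$-action only to bring $\pi_n(\phi(Z))$ to the distinguished point $p\in\Gr_n$; then, working in coordinates $\Aa^n\times(\Aa^1\!-\!0)\times\Aa^m$ on the trivialized patch, it linearly contracts the $\Aa^m$-coordinate and separately contracts the $(\Aa^1\!-\!0)$-coordinate to its constant value $l$ on $Z$ by an affine homotopy in $\Aa^1_L$ (shrinking $U$ to avoid~$0$); finally, the residual $\Gm$-discrepancy between $\hat r=(l,p)$ and $q=(1,p)$ is eliminated by invoking $\Aa^1$-chain connectedness of $\GR(n,n+1)\simeq\Aa^{n+1}-0$ for $n>0$. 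Your approach replaces Steps~2 and~3 by a single parametrized $\SL_{np}$-action: the explicit section $V_0\times\Gm\to\SL_{np}$ and the Whitehead-type decomposition of $\mathrm{diag}(\lambda,\lambda^{-1})$ into elementary matrices over $\Z[\lambda^{\pm 1}]$ convert the nonexistent contraction of $\Gm$ into an honest $\Aa^1$-homotopy through $\SL_{np}$. This is more uniform and avoids the separate appeal to the $\Aa^1$-topology of $\Aa^{n+1}-0$; the paper's version, on the other hand, keeps all homotopies as explicit coordinate deformations in $\TAU_n$ and never needs to write down a section of $\SL_{np}\to\GR(n,np)$.
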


\begin{proof}
We consider $\Gr_n$ and $\GR_n$ as embedded in $\Tau_n$ and $\TAU_n$ via the  respective zero sections.
Denote $p \in \Gr_n$ the distinguished point.
Then the distinguished point $q \in \GR_n$ is $1 \in \Aa^1-0$ in the fiber of 
$\pi_n \colon \GR_n \to \Gr_n$ over $p$. 

The correspondence $c$ has $\phi(U) \subset \TAU_n$ and $\phi(Z) = r$, where $r$ is some $L$-point of $\GR_n$. We need to \glqq move\grqq  $\,\, r$ to the point $q \in \GR_n(k)$ and to \glqq stretch\grqq  $\,\, \phi(U)$, so that $\,\, \phi(U)$ would be embedded into the fiber of $\TAU_n$ over $q$.

\begin{step}
First we  \glqq move\grqq  $\,\, r$ to some point $\hat r \in \GR_n(L)$ such that $\pi_n(\hat r) = p$. Denote $s = \pi_n(r) \in \Gr_n(L)$. The group $\SL_N(L)$ acts transitively on $\Gr(n, N)_L$, so
after taking colimit the group $\SL(L)$ acts transitively on $\Gr_{n, L}$. Thus we can choose a matrix $A \in \SL(L)$ such that $A \cdot s = p$ in $\Gr_{n, L}$. 

The action of $\SL(L)$ on $\Gr_{n, L}$ lifts to an action on $\Tau_{n, L}$ and hence on $\GR_{n, L}$. Since $\Tau_{n, L} \to \Gr_{n, L}$ is a colimit of $\SL(L)$-equivariant vector bundles, the action of $\SL(L)$ extends to $\TAU_{n, L}$. Hence the matrix $A$ gives an automorphism ${\TAU_{n, L} \xrightarrow{A} \TAU_{n, L}}$ where ${A \cdot r} = \hat r \in \GR_{n, L}$ and $\pi_{n, L} (\hat r) = p$. Note that $A$ induces an automorphism of the zero section of  $\TAU_{n, L}$. By Lemma~\ref{lm:sl} there is an equivalence of correspondences $c \sim c_1 = (U,  \phi_1, g)$, where
$$\phi_1(Z) = A \cdot \phi (Z) = \hat r \in \GR_n(L)$$ for $Z = \supp(c) = \supp(c_1)$. 
\end{step}

\begin{step}
Now we  \glqq stretch\grqq  $\,\, \phi_1(U)$, so that it would be embedded in the fiber of $\TAU_n$ over $\hat r$. 
By definition, $\phi$ has image in $\TAU(n, N)$ for some $N \geqslant n$.
The point $p$ has an affine open neighborhood $W \simeq \Aa^m \subset \Gr(n, N)$ for $m = n(N-n)$, over which $\Tau(n, N)$ is canonically trivialized, hence so is $\GR(n, N)$ \cite[Corollary~8.15]{GWAlgGeomI}. 

We have: 
\begin{gather*}
\Tau(n, N) \times_{\Gr(n, N)} W \simeq  \Aa^n \times  \Aa^m; \\
\GR(n, N) \times_{\Gr(n, N)} W \simeq  (\Aa^1-0) \times  \Aa^m; \\
\TAU(n, N) \times_{\GR(n, N)} ((\Aa^1-0) \times  \Aa^m) \simeq \Aa^n \times (\Aa^1-0) \times \Aa^m = V. 
\end{gather*}

We replace $U$ with its open subscheme $U_1 = \phi_1^{-1}(V) \subset U$ in the correspondence $c_1$. 
By Lemma~\ref{lm:projection} we can assume that there is a morphism $h \colon U_1 \to \mathrm{Spec} \, \kappa(Z) \to \spl.$

Let $p$ have coordinates $(0, \dots, 0) \in \Aa^m$. Denote:
 $$\phi_1 = (\rho, \psi, \chi) \colon U_1 \to \Aa^n \times (\Aa^1-0) \times \Aa^m .$$
 Consider the homotopy $d = (U_1 \times \Aa^1, \Phi, g \circ \pr_{U_1}) \in Fr_n^{\SL}(\Aa^1, Y)$, defined by
$$\Phi \colon U_1 \times \Aa^1 \xrightarrow{((\rho,\psi) \circ \pr_{U_1}) ,  (\xi_i)_{i=1}^m}  \Aa^n \times (\Aa^1-0) \times \Aa^m,$$
where $\xi_i(u, t) = (1-t) \cdot (\chi_i(u))$.
Since $\supp(d) = Z \times \Aa^1$, the correspondence $d$ realizes a homotopy between $i_0^*(d) = (U_1, \phi_1, g)$ and $i_1^*(d) = (U_1, (\rho, \psi, p), g) = c_2$, where $p$ denotes the constant map.

Recall that $\phi_1(Z) = \hat r$ where $\hat r$ corresponds to $(l, p) \in (\Aa^1_L-0) \times \Aa^m$ for some $l \in L^{\times}$. Consider the map:
$$ \Psi \colon U_1 \times \Aa^1 \xrightarrow{(1-t) \cdot (\psi, h)(u)+ t \cdot l} \Aa^1_L \xrightarrow{pr} \Aa^1.$$

Denote $U_2 = \Psi^{-1}(\Aa^1-0) \subset U_1 \times \Aa^1$, it is an \'etale neighborhood of $Z \times \Aa^1$. Consider the homotopy: $$d' = (U_2, (\rho \circ \pr', \Psi, p), g \circ \pr') \in Fr_n^{\SL}(\Aa^1, Y),$$ where $\pr'$ denotes the projection $U_2 \hookrightarrow U_1 \times \Aa^1 \to U_1$. We have $\supp(d') = Z \times \Aa^1$, $i_0^*(d') = c_2$, $i_1^*(d') = (U_3,(\rho, \hat r), g) = c_3$. Altogether, we get that $c_1 \sim c_3 = (U_3, \phi_3, g)$ where $\phi_3(U_3) \subset \Aa^n \times \hat r$, which is the fiber of $\TAU_n$ over the point $\hat r \in (\Aa^1-0) \times \Aa^m \subset \GR_n$.
\end{step}

\begin{step}
Finally, we \glqq move\grqq \, the fiber of $\TAU_n$ over $\hat r$ to the fiber over $q = (1, p) \in \Gm \times \Aa^m \subset \GR_n$. Both $\hat r$ and $q$ are in the fiber of $\pi_n$ over $p$.
Consider the embedding $p = \Gr(n, n) \subset \Gr(n, n+1) \simeq \Pp^n$, and note that $$\GR(n, n+1) \simeq \Oo_{\Pp^n}(-1) - z(\Pp^n) \simeq {\Aa^{n+1} - 0}.$$ 

The smooth scheme $\Aa^{n+1}-0$ is $\Aa^1$-chain connected for $n>0$ (see~\cite[Definition~2.2.2]{AMAlgH-cobordism}). 
That means, there is a finite sequence of $\Aa^1_L$-paths $\gamma_0, \dots, \gamma_{\ell}$ in $\GR(n, n+1)$ such that $$\gamma_0(0) = \hat r; \quad \gamma_{\ell}(1) = q; \quad \gamma_i(0) = \gamma_{i-1}(1) \text{ for } 1 \leqslant i \leqslant {\ell}.$$ Each $\Aa^1_L$-path $\gamma_i$ will provide a homotopy that  \glqq moves\grqq \, the fiber of $\TAU_n$ over $\gamma_i(0)$ to the fiber over $\gamma_i(1)$.

Let us fix $0 \leqslant i \leqslant {\ell}$ and consider $\gamma_i \colon \Aa^1_L \to \GR(n, n+1)$.
Every vector bundle has a trivialization over an affine space, so $$\TAU(n, n+1) \times_{\GR(n, n+1)} \Aa^1_L \simeq \Aa^n \times \Aa^1_L.$$ This way we get a homotopy $\Gamma_i \colon \Aa^n \times \Aa^1_L \to  \TAU(n, n+1)$ where $\Gamma_i(\Aa^n, t)$ is the fiber of $\TAU(n, n+1)$ over $\gamma_i(t) \in \GR(n, n+1)(L)$.

Denote $\phi^0 = \pr_{\Aa^n} \circ \phi_3$, $\bar U = U_3$, $c^0 = c_3$. Define $\bar h \colon U_3 \hookrightarrow U_2 \xrightarrow{\pr'} U_1 \xrightarrow{h} \spl$.
Consider the correspondence $d_i = (\bar U \times \Aa^1, \Phi_i, g \circ \pr_{\bar U}) \in Fr_n^{\SL}(\Aa^1, Y),$ given by
$$\Phi_i \colon \bar U \times \Aa^1 \xrightarrow{(\phi^i \circ \pr_1,  \id \circ \pr_2, \bar h \circ \pr_1)} \Aa^n \times \Aa^1 \times \spl \xrightarrow{\Gamma_i} \TAU(n, n+1).$$
We have $\supp(d_i) = Z \times \Aa^1$, $i_0^*(d_i) = c^i$, and we define by induction: $$c^{i+1} = i_1^*(d_i) =(\bar U, \phi^{i+1}, g).$$

The last correspondence $c^{\ell+1}$ has the properties we wanted:
$\phi^{\ell+1}$ maps the support of $c^{\ell+1}$ to $q$, and $\phi^{\ell+1}(\bar U)$ is embedded in the fiber of $\TAU_n$ over $q$. Hence $c^{r+1}$ is in the image of the homomorphism $\varepsilon$, and the proposition follows. 
\end{step}
\end{proof}

\subsection{Finite Milnor-Witt correspondences and framed correspondences}
\label{ssec: milnor-witt}

\begin{notation}
In this subsection we assume that $k$ is a perfect field, $\chr k \ne 2$.
$\K_n^M$ and $\K_n^{MW}$ denote the $n$-th Milnor and Milnor-Witt K-theory groups respectively, defined for all fields, $n \in \Z$. $\Kk_n^{MW}$ denotes the unramified Nisnevich sheaf of Milnor-Witt K-theory on $\smk$, as defined in~\cite[Chapter~2]{MorelA1-alg-top}.
$\gw$ is the presheaf of Grothendieck-Witt groups on $\smk$, its associated Nisnevich sheaf is $\Kk_0^{MW}$.

For a smooth $k$-scheme $X$ we denote by $\Omega_X$ the sheaf of differentials of $X$ over $\spk$, and by $\omega_X = \det \Omega_X$ the canonical sheaf. Given a morphism $f \colon X \to Y$, we write $\omega_f$ or $\omega_{X/Y}$ for $\omega_{X/k} \otimes f^* \omega_{Y/k}^{\vee}$. For an equidimensional scheme $X \in \smk$ we denote $d_X = \dim X$. Finally, $X^{(n)}$ denotes the set of points of codimension $n$.
\end{notation}

\sssec{} Recall the definition of \textit{(twisted) Chow-Witt groups with supports} (see~\cite[Definition~3.1]{CFFiniteMWCor}): for $X \in \smk$, $\Ll$ a line bundle over $X$, $Z \subset X$ a closed subscheme, $n \in \N$ one sets \[\Chw^n_Z(X, \Ll) = H^n_Z(X, \Kk_n^{MW}(\Ll))\] (see~\cite[Section~1.2]{CFFiniteMWCor} for the construction of the twisted sheaf of Milnor-Witt K-theory $\Kk_n^{MW}(\Ll)$). 
The Chow-Witt group $\Chw^n_Z(X, \Ll)$ can be computed as the $n$-th  cohomology group of the Rost-Schmid complex $C^*_{\rs}(X, \Kk_n^{MW}(\Ll))$, constructed in~\cite[Chapter~5]{MorelA1-alg-top}, whose terms are given by \[C^d_{\rs, Z}(X, \Kk_n^{MW}(\Ll)) = \bigoplus_{x \in X^{(d)} \cap Z} (i_x)_*\K_{n-d}^{MW}(\kappa(x), \omega_{x/X} \otimes \Ll).\]
As the classical Chow groups, the Chow-Witt groups with supports are contravariant in $X$ (and $\Ll$), have (twisted) pushforwards along proper maps (more generally, along maps which are proper when restricted to the support), and exterior product which induces the  intersection product.

Let $i \colon Z \hookrightarrow X$ be a closed embedding of codimension $c$ of smooth $k$-schemes. Then the comparison of the corresponding Rost-Schmid complexes gives the \textit{purity isomorphism}
\begin{equation} \label{eq:purity}
\Chw^n_Z(X, \Ll) \simeq  \Chw^{n-c}(Z, i^*\Ll \otimes \det N_i),
\end{equation}
where $N_i$ is the normal bundle of the embedding. 

\sssec{} 
Recall the category of \textit{finite Milnor-Witt correspondences} $\Cor_k$ (see~\cite[Section~4.15]{CFFiniteMWCor}), whose objects are smooth $k$-schemes, and morphisms are given by abelian groups \[ \Cor_k (X, Y) = \colim_{T \in \mathcal{A}(X, Y)} \Chw^{d_Y}_T (X \times Y, \omega_{X \times Y / X} ),\] where $ \mathcal{A}(X, Y)$ is the set of closed subsets of $X \times Y$ that are finite and surjective over corresponding irreducible components of $X$, when endowed with the reduced scheme structure.\footnote{In fact, it doesn't matter which scheme structure on closed subsets to consider in this context.} The category $\Cor_k$ is symmetric monoidal~\cite[Lemma~4.21]{CFFiniteMWCor}. We will write $\Cor(X, Y)$ for $\Cor_k(X, Y)$.

\sssec{} \label{sssec:graph functor}
There is a graph functor $\widetilde{\gamma} \colon \smk \to \Cor_k$ (see~\cite[Section~4.3]{CFFiniteMWCor}), which is defined as identity on objects, and sends a morphism $f \colon X \to Y$ to the pushforward of the quadratic form $\langle 1 \rangle \in \Kk_0^{MW}(X)$ under
\[(\id, f)_* \colon \Kk_0^{MW}(X) = \Chw^0(X) \to \Chw^{d_Y}_{\Gamma_f}(X \times Y, \omega_{X \times Y / X}).\]

\sssec{}
The MW-motivic cohomology $H^{p, q}_{MW} (-, \Z)$ was defined in~\cite[Section~6]{CFFiniteMWCor}. The following analogue of the Nesterenko-Suslin-Totaro theorem~\cite[Theorem~5.1]{MVWMotCohom} was proven in~\cite[Theorem~2.9]{CFComparisonMWCohom}.

\begin{theorem}[Calm\`es, Fasel]
Let $k$ be a perfect field, $\chr k \ne 2$, $L / k$ a finitely generated field extension. Then there is a ring isomorphism, natural in $L$:
$$\Phi_L \colon \bigoplus_{n \in \Z} \K_n^{MW}(L) \xrightarrow{\sim} \bigoplus_{n \in \Z} H^{n, n}_{MW} (\spl, \Z).$$ 
\end{theorem}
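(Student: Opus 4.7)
The plan is to construct $\Phi_L$ on the Milnor-Witt K-theory generators, verify that it respects the defining relations of $\K_*^{MW}(L)$, and then establish bijectivity by matching both sides with an explicit complex of Milnor-Witt correspondences. Ring multiplicativity is then forced by the symmetric monoidal structure on $\Cor_k$ and the induced product on bigraded $H^{*,*}_{MW}(-,\Z)$.

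First I would define $\Phi_L$ on generators. Recall that $\K_*^{MW}(L)$ is generated in degree $1$ by symbols $[u]$ for $u \in L^{\times}$ and in degree $-1$ by $\eta$. For $u \in L^{\times}$, set $\Phi_L([u])$ to be the class in $H^{1,1}_{MW}(\spl,\Z)$ obtained, via the graph functor $\widetilde{\gamma}$ of Section~\ref{sssec:graph functor}, by pushing $\langle 1 \rangle \in \Chw^0(\spl)$ forward along the graph $\Gamma_u \hookrightarrow \spl \times \Gg_m$ and then projecting to the reduced group corresponding to $\Gg_m^{\wedge 1}$ (i.e.\ collapsing the distinguished point $1 \in \Gg_m$). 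For $\eta$ I would use the canonical Hopf-type class in $H^{-1,-1}_{MW}(\spl,\Z)$ produced by pushing $\langle 1 \rangle$ along the multiplication map $\Gg_m \times \Gg_m \to \Gg_m$, following Morel's geometric model of the Hopf element. Extending multiplicatively to monomials in the $[u_i]$ and powers of $\eta$ gives $\Phi_L$ on all of $\K_*^{MW}(L)$.

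Next I would verify the defining relations: the Steinberg relation $[u][1-u]=0$, twisted multiplicativity $[uv] = [u]+[v]+\eta[u][v]$, and the $\eta$-relations, notably $\eta \cdot h = 0$ with $h = 2 + \eta[-1]$. Twisted multiplicativity follows from comparing the pushforward of $\langle 1 \rangle$ along $\Gg_m \times \Gg_m \to \Gg_m$ with the diagonal contribution on $\Gg_m^{\wedge 2}$; the residual term produces exactly the $\eta[u][v]$ correction. The $\eta$-relations reduce to identities in $\Kk_0^{MW} = \gw$ once one models $\eta$ by the Hopf class above. The Steinberg relation requires constructing an explicit $\Aa^1$-homotopy of MW-correspondences $\spl \to \Gg_m^{\wedge 2}$ trivialising $[u] \cdot [1-u]$; the classical Totaro cycle in $\Aa^1_L \times \Gg_m^2$ can be used as the underlying support, but one must now equip its canonical bundle with a coherent trivialisation so that the quadratic-form data is transported correctly along the homotopy.

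To prove bijectivity, I would identify $H^{n,n}_{MW}(\spl,\Z)$ with the zeroth homology of the Suslin-type complex $n \mapsto \Cor(\Delta^n_L, \Gg_m^{\wedge n})$, and then unfold each term via the Rost-Schmid complex: a codimension-$n$ cycle on $\Delta^n_L \times \Gg_m^{\wedge n}$ reduces, modulo faces, to a sum of $L$-rational points of $\Gg_m^n$ weighted by elements of $\gw$ of their residue fields. This presents $H^{n,n}_{MW}(\spl,\Z)$ by symbols, proving surjectivity of $\Phi_L$, and the boundary relations read off from the complex match exactly the defining relations of $\K_n^{MW}(L)$, giving injectivity. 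As a cross-check, composing with the unramified Milnor-Witt sheaf isomorphism $\Kk^{MW}_n(\spl) \simeq \K_n^{MW}(L)$ of Morel should recover the identity. Naturality in $L$ is built into both constructions, since pushforward along essentially smooth maps commutes with the formation of the Rost-Schmid complex. I expect the main obstacle to be the geometric verification of the Steinberg relation with quadratic-form coefficients, as this is precisely where the $\eta$-twist in Milnor-Witt K-theory enters substantively and has no direct analogue in the classical Nesterenko-Suslin-Totaro argument.
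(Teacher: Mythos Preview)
The paper does not prove this statement at all: it is quoted as a result of Calm\`es and Fasel, with a citation to~\cite[Theorem~2.9]{CFComparisonMWCohom}, and is used as a black box in the injectivity argument of Section~\ref{ssec: injectivity}. So there is no ``paper's own proof'' to compare your proposal against; the only content the present paper adds is the description (just after the diagram~\eqref{diag:injectivity}) of what $\Phi$ does on the generators $\langle a\rangle$ and $[a]$, which agrees with your proposed definition in nonnegative degrees.

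As a sketch of the Calm\`es--Fasel proof your outline is broadly in the right spirit (it is the Milnor--Witt analogue of the Nesterenko--Suslin--Totaro argument), but one part is genuinely underspecified: your treatment of $\eta$ and of negative degrees. Saying that $\Phi_L(\eta)$ is obtained by ``pushing $\langle 1\rangle$ along the multiplication map $\Gg_m\times\Gg_m\to\Gg_m$'' does not, as written, produce an element of $H^{-1,-1}_{MW}(\spl,\Z)$; the graph of $m$ is a correspondence $\Gg_m^2\to\Gg_m$, not $\spl\to$ (something of weight $-1$), and you have not explained how $H^{n,n}_{MW}$ is even defined or computed for $n<0$ in the framework of~\cite{CFFiniteMWCor}. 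In practice the negative part is handled not by writing down an explicit Hopf cycle but by invoking the contraction/cancellation machinery for MW-motivic cohomology (so that $H^{n,n}_{MW}(\spl,\Z)$ for $n<0$ is identified with the $(-n)$-fold contraction of $\Kk_0^{MW}$, i.e.\ with $\K_n^{MW}(L)$, essentially by Morel's computation). Without that ingredient your bijectivity argument, which you phrase entirely in terms of the Suslin-type complex $\Cor(\Delta^\bullet_L,\Gg_m^{\wedge n})$, only covers $n\geqslant 0$. The Steinberg verification you flag as the hard step is indeed the nontrivial core in nonnegative degrees, and your description of it (lifting Totaro's cycle and equipping it with the correct quadratic datum) is the right idea but would need the explicit Rost--Schmid computation carried out to count as a proof.
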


\begin{rem}
Note that for $n \geqslant 0$ we have
\[
H^{n, n}_{MW} (\spl, \Z) = H_0(\Cor(\Delta^{\bullet}_L, \Gg_m^{\wedge n})) ,
\]
and the multiplication on $H_0(\Cor(\Delta^{\bullet}_L, \Gg_m^{\wedge *}))$ is defined by means of the exterior product of Chow-Witt groups, in the same way as in Subsection~\ref{ssec: neshitov}. 
\end{rem}

\sssec{}
Here we recall the construction of the functor $$\alpha \colon \Fr_*(k) \longrightarrow \Cor_k,$$ given in~\cite[Proposition~2.1.12]{DF_MWMotComplexes}. On objects one has $\alpha(X) = X$. On correspondences of level $0$ one defines $\alpha$ as the extension of the graph functor $\widetilde{\gamma}$, by mapping correspondences with empty support to $0$. For $c = (U, \phi, g) \in \Fr_n(X, Y)$ a framed correspondence of level $n \geqslant 1$ with support $Z$, we describe how to associate to it  $\alpha(c) \in \Cor(X, Y)$ (it is enough to consider equidimensional $Y$). 

\sssec{} Denote $\phi = (\phi_1, \dots, \phi_n)$, where $\phi_i \in \Oo(U)$, and let $|\phi_i|$ be the vanishing locus of $\phi_i$, then $Z = |\phi_1| \cap \dots \cap |\phi_n|$ as a set. Each $\phi_i \in \oplus_{u \in U^{(0)}} \kappa(u)^{\times}$ defines an element of $\oplus_{u \in U^{(0)}}  \K_1^{MW}(\kappa(u))$. For each $i$ the residue map
$$ \partial \colon \bigoplus_{x \in U^{(0)}} \K_1^{MW}(\kappa(u)) \longrightarrow \bigoplus_{x \in U^{(1)}} \K_0^{MW}(\kappa(x), \omega_{x/X})$$
provides an element $\partial(\phi_i)$ supported on $|\phi_i|$, so  defines a cycle $Z(\phi_i) \in H^1_{|\phi_i|}(U, \K_1^{MW})$. Using the intersection product, we get an element
$$Z(\phi) = Z(\phi_1) \cdot \ldots \cdot Z(\phi_n) \in H^n_Z(U, \K_n^{MW}).$$

As part of the data of $c$, there is an étale map $p \colon U \to \Aa^n_X$. It induces an isomorphism $p^* \omega_{\Aa^n_X} \simeq \omega_U$. Denote the projection by $q \colon \Aa^n_X \to X$. 
On $\Aa^n_X=\mathrm{Spec}_X\Oo_X[t_1,\ldots, t_n]$, the sheaf $\omega_{\Aa^n_X}\otimes q^*\omega_X^\vee$ has the canonical generator $dt_1\wedge\ldots\wedge dt_n$, giving the canonical isomorphism $\Oo_{\Aa^n_X}\simeq  \omega_{\Aa^n_X}\otimes q^*\omega_X^\vee.$
We get the canonical isomorphism:
$$ \Oo_U \simeq p^* (\Oo_{\Aa^n_X}) \simeq 
p^* (\omega_{\Aa^n_X} \otimes q^* \omega_X^{\vee}) \simeq
\omega_U \otimes (q p)^* \omega_X^{\vee}.$$
Thus we can consider $Z(\phi)$ as an element of
 $\Chw^n_Z(U, \, \omega_U \otimes (q p)^* \omega_X^{\vee})$.

The map $(q p, g) \colon U \to X \times Y$ sends $Z$ to a closed subscheme $T$, which is finite and surjective over $X$ by~\cite[Lemma~1.4]{MVWMotCohom}. Since $Z$ is finite over $X$, the restriction $ (q p, g) \big|_{Z} $ is a finite morphism.
 We have then the pushforward morphism:
$$(q p, g)_* \colon  \Chw^n_Z(U,   \, \omega_U \otimes (q p)^* \omega_X^{\vee}) \longrightarrow \Chw^{d_Y}_T(X \times Y,  \, \omega_{X \times Y / X}).$$
The image $(q p, g)_*(Z(\phi))$ is the finite MW-correspondence $\alpha(c) \in \Cor(X, Y)$. 

\sssec{} The functor $\alpha$ is naturally extended to linear framed correspondences.
By~\cite[Example~2.1.11]{DF_MWMotComplexes}, for a suspension morphism $\sigma_Y$ one has $\alpha(\sigma_Y) = \id_Y \in \Cor(Y, Y).$ Altogether, for any $X, Y \in \smk$ we obtain a homomorphism of abelian groups
$$\alpha \colon \ZF(X, Y) \longrightarrow \Cor(X, Y),$$
inducing a homomorphism of simplicial abelian groups
$$\alpha_l \colon \ZF(\del, \Gg_m^{\wedge l}) \to \Cor(\del, \Gg_m^{\wedge l}). $$

For each $l \geqslant 0$ the homomorphism $\alpha_l$ factors through the zeroth homology:
\begin{equation}
\label{eq:alpha_*}
\alpha_* \colon H_0(\ZF(\del, \Gg_m^{\wedge l})) \to 
H_0(\Cor(\del, \Gg_m^{\wedge l})) = H^{l, l}_{MW} (\spk, \Z).
\end{equation}

\begin{lm} \label{lm:alpha_* ring}
The map~\eqref{eq:alpha_*} induces a ring homomorphism:
$$\alpha_* \colon H_0(\ZF(\del, \Gg_m^{\wedge *})) \longrightarrow 
H_0(\Cor(\del, \Gg_m^{\wedge *})).$$
\end{lm}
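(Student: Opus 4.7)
The plan is to show that $\alpha$ intertwines the external products that induce the multiplication on both simplicial abelian groups, and sends the unit to the unit; then the ring structure statement on $H_0$ will follow formally. Concretely, the multiplication on $H_0(\ZF(\Delta^{\bullet}_k, \Gg_m^{\wedge *}))$ comes from the external product of framed correspondences of Section~\ref{sssec: external product}, and the multiplication on $H_0(\Cor(\Delta^{\bullet}_k, \Gg_m^{\wedge *}))$ comes from the exterior product of (twisted) Chow-Witt groups, used as in~\cite[Section~4]{CFFiniteMWCor} to make $\Cor_k$ symmetric monoidal. So it suffices to verify the identity
\[
\alpha(c_1 \boxtimes c_2) \;=\; \alpha(c_1) \boxtimes \alpha(c_2) \quad \text{in } \Cor(X_1 \times X_2,\, Y_1 \times Y_2),
\]
for arbitrary $c_i \in \Fr_{n_i}(X_i, Y_i)$, together with preservation of the unit $\id_{\spk}$.

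The first step is to unpack both sides. Writing $c_i = (U_i, \phi_i, g_i)$ with support $Z_i$ and \'etale neighborhood $p_i \colon U_i \to \Aa^{n_i}_{X_i}$, the product $c_1 \boxtimes c_2$ has underlying data $(U_1 \times U_2, \phi_1 \circ \pr_{U_1} + \phi_2 \circ \pr_{U_2}, g_1 \times g_2)$ and support $Z_1 \times Z_2$. By the definition of $\alpha$, computing $\alpha(c_1 \boxtimes c_2)$ amounts to forming the Rost-Schmid cycle $Z(\phi_1 \times \phi_2) \in \Chw^{n_1+n_2}_{Z_1 \times Z_2}(U_1 \times U_2, \, \omega_{U_1 \times U_2} \otimes (qp)^*\omega_{X_1 \times X_2}^{\vee})$ and pushing it forward along $(q_1 p_1 \times q_2 p_2,\, g_1 \times g_2)$. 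The first key input is the exterior compatibility of iterated residues in the Rost-Schmid complex: using the canonical isomorphism $\omega_{U_1 \times U_2} \simeq \omega_{U_1} \boxtimes \omega_{U_2}$, one has $Z(\phi_1 \times \phi_2) = Z(\phi_1) \boxtimes Z(\phi_2)$ under exterior product of Chow-Witt groups with supports. This follows by reducing via the intersection product formula to the one-variable identity $Z(\phi_i) = \partial(\phi_i)$ and applying the multiplicativity of the residue maps on Milnor-Witt K-theory along a product of regular systems of parameters.

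Next I would invoke the standard compatibility of proper pushforward with exterior product in Chow-Witt cohomology of smooth $k$-schemes, which applies here because each $(q_i p_i, g_i)$ is proper when restricted to $Z_i$. This gives
\[
(q p,\, g_1 \times g_2)_*\bigl(Z(\phi_1) \boxtimes Z(\phi_2)\bigr) \;=\; (q_1 p_1, g_1)_* Z(\phi_1) \;\boxtimes\; (q_2 p_2, g_2)_* Z(\phi_2),
\]
after matching the relative canonical twists via the canonical isomorphism $\omega_{(X_1 \times X_2) \times (Y_1 \times Y_2)/(X_1 \times X_2)} \simeq \pr_1^* \omega_{X_1 \times Y_1/X_1} \otimes \pr_2^* \omega_{X_2 \times Y_2/X_2}$. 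This is exactly the equality $\alpha(c_1 \boxtimes c_2) = \alpha(c_1) \boxtimes \alpha(c_2)$. Preservation of the multiplicative unit is immediate: both units are represented by $\id_{\spk}$ viewed as a level-$0$ correspondence, and $\alpha$ extends the graph functor $\widetilde{\gamma}$, hence sends it to the unit in $\Cor(\spk, \spk)$.

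The main obstacle is the careful bookkeeping of the canonical-bundle twists: one must verify that the identification of $Z(\phi_1 \times \phi_2)$ with $Z(\phi_1) \boxtimes Z(\phi_2)$, and of the ambient twisted sheaf $\omega_{U_1 \times U_2} \otimes (qp)^*\omega_{X_1\times X_2}^\vee$ with the exterior tensor product of its factors, matches the exterior product convention in $\Cor_k$ on the nose, with no sign discrepancy relative to $\langle 1 \rangle \in \Kk_0^{MW}$. Once this comparison is pinned down, the remaining assertions are formal consequences of the naturality of $\alpha$ and the fact that $H_0$ of a product of simplicial abelian groups computes the product ring structure in the sense used in Subsection~\ref{ssec: neshitov}.
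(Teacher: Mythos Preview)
Your proposal is correct and follows essentially the same approach as the paper: reduce to showing that $\alpha$ intertwines the external products, verify $Z(\phi_1 \times \phi_2) = Z(\phi_1) \boxtimes Z(\phi_2)$ via the residue construction, and then invoke compatibility of proper pushforward with exterior product in Chow--Witt theory. The paper's proof is slightly more terse---it reduces to $n=m=1$ and writes out the chain of equalities for $Z(\chi)$ directly, and it does not explicitly discuss the unit or the twist bookkeeping you flag---but the substance is the same.
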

\begin{proof}
We have to check that for correspondences $c = (U, \phi, g) \in \Fr_n(X, Y)$ and $ d = (V, \psi, h) \in \Fr_m(X_1, Y_1)$ of levels $n, m \geqslant 1$ with non-empty supports $Z$ and $Z'$  holds the following: $$\alpha(c \times d) = \alpha(c) \otimes \alpha(d).$$

First we show that the construction of $Z(\phi)$ respects the product. Since the construction is multiplicative, we can assume that $n = m = 1$. 
The correspondence $c \times d$ has the framing 
$ \chi = (\phi \circ \pr_U, \psi \circ \pr_V) \colon U \times V\longrightarrow \Aa^2$, and is supported on $Z \times Z'$.
Then in $\Chw^2_{Z \times Z'} (U \times V) $ we have:
\begin{multline*}
Z(\chi) = Z(\phi \circ \pr_U) \cdot Z(\psi \circ \pr_V) = [\partial [\phi \circ \pr_U]] \cdot [\partial [\psi \circ \pr_V]] = \\
[\pr_U^* \partial [\phi]] \cdot [\pr_V^* \partial [\psi]] = [\partial [\phi] \times 1_V] \cdot [1_U \times \partial [\psi]] = [\partial [\phi] \times \partial [\psi]] = Z(\phi) \times Z(\psi).
\end{multline*}

The proper pushforward of Chow-Witt groups commutes with exterior product, hence the claim follows.
\end{proof}

\subsection{Injectivity of the unit map $\varepsilon_*$}
\label{ssec: injectivity}

\sssec{} 
In this subsection we assume that $\chr k = 0$.

To construct a left inverse map for $\varepsilon_*$, we will consider the following diagram:
\begin{equation}
\label{diag:injectivity}
\xymatrix{
&H_0(\ZF(\del, \Gg_m^{\wedge *}))  \ar[r]^-{\varepsilon_*} \ar[d]^-{\alpha_*} & H_0(\ZF^{\SL}(\del, \Gg_m^{\wedge *})) \ar@{-->}[ld]^-{\alpha_*^{\SL}} \\
\bigoplus_{l \geqslant 0}  \K_l^{MW}(k)  \ar[ru]^-{\Psi}_-{\sim} \ar[r]^-{\Phi}_-{\sim}
& \bigoplus_{l \geqslant 0} H^{l, l}_{MW} (\spk, \Z) 
}
\end{equation}
Here the isomorphisms $\Psi$ and $\Phi$ are the ones constructed in~\cite[Section~8.3]{NeshitovFramedCorrMW} and~\cite[Theorem~1.8]{CFComparisonMWCohom} respectively. 
We recall how they are constructed on generators $\langle a \rangle \in \gw(k)$ and $[a] \in \K_1^{MW}(k)$ for $a \in k^\times$. 

Denote $\Aa^1 = \spk[x]$ and $\Gg_m = \spk[x, x^{-1}] $.
Then the image $\Psi (\langle a \rangle)$ is the class of the correspondence
 $ (\Aa^1, a x, \pr_k) \in \Fr_1(\spk, \spk)$ in $H_0(\ZF(\del,  \spk))$, and $\Psi ([a])$ is the class of the correspondence $ (\Gm, x-a, \id) \in \Fr_1(\spk, \Gm)$ in $H_0(\ZF(\del,  \Gg_m^{\wedge 1}))$. Meanwhile 
$\Phi(\langle a \rangle) = \langle a \rangle \in \gw(k) = H^{0, 0}_{MW} (\spk, \Z)$ and $\Phi([a])$ is the class of 
$\widetilde{\gamma}(\spk \xrightarrow{a} \Gm) \in \Cor(\spk, \Gm)$ in 
$ H^{1, 1}_{MW}(\spk, \Z)$.

\begin{lm} \label{lm: diag commutes}
With the notations of the diagram~\eqref{diag:injectivity} one has $\Psi \circ \Phi^{-1} \circ  \alpha_* = \id.$
\end{lm}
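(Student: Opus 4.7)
The plan is to exploit multiplicativity. By Lemma~\ref{lm:alpha_* ring}, $\alpha_*$ is a graded ring homomorphism, while $\Psi$ and $\Phi$ are graded ring isomorphisms by construction. Hence $\Psi \circ \Phi^{-1} \circ \alpha_*$ is a graded ring endomorphism of $H_0(\ZF(\del, \Gg_m^{\wedge *}))$, and the statement is equivalent to the identity $\alpha_* \circ \Psi = \Phi$ of graded ring homomorphisms $\bigoplus_{l \geqslant 0} \K_l^{MW}(k) \to \bigoplus_{l \geqslant 0} H^{l, l}_{MW}(\spk, \Z)$. By Morel's presentation, the ring $\K_{\geqslant 0}^{MW}(k)$ is generated by the classes $\langle a \rangle \in \K_0^{MW}(k)$ and $[a] \in \K_1^{MW}(k)$ for $a \in k^{\times}$, so it suffices to check the equality on these two families of generators.

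For $\langle a \rangle$, I would unwind the construction of $\alpha$ in \S\ref{ssec: milnor-witt} applied to the Neshitov generator $\Psi(\langle a \rangle) = [(\Aa^1, ax, \pr_k)]$. The support is $Z = \{0\}$, the étale neighborhood is $U = \Aa^1$, and the cycle $Z(ax) = \partial[ax]$ is identified, via Morel's residue formula together with the canonical trivialization $\Oo_{\Aa^1} \simeq \omega_{\Aa^1/k}$ furnished by $dx$, with the class $\langle a \rangle \in \gw(k) = H^{0, 0}_{MW}(\spk, \Z)$. The relevant pushforward along $\Aa^1 \to \spk \times \spk = \spk$ restricts on $Z$ to the identity of $\spk$, so the final output is $\langle a \rangle$, which matches $\Phi(\langle a \rangle)$ on the nose.

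For $[a]$, I would carry out the analogous computation for $\Psi([a]) = [(\Gg_m, x - a, \id)]$. The support $Z = \{a\} \subset \Gg_m$ is cut out by the uniformizer $x - a$, whose unit part at $a$ equals $1$, so $Z(x - a)$ is the class $\langle 1 \rangle$ at $a$ in the twisted Chow-Witt group. The map $(qp, g) \colon \Gg_m \to \spk \times \Gg_m$ restricts on $Z$ to the graph of the morphism $a \colon \spk \to \Gg_m$, so the pushforward produces exactly the finite Milnor-Witt correspondence $\widetilde{\gamma}(a) \in \Cor(\spk, \Gg_m)$; this is $\Phi([a]) \in H^{1, 1}_{MW}(\spk, \Z)$ as recalled above.

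The main obstacle I anticipate is purely bookkeeping: one has to match the twists by canonical sheaves of differentials appearing in the construction of $\alpha$, apply the Rost-Schmid / purity identifications with the correct signs, and check that these normalizations agree with those implicit in the definitions of $\Psi$ (Neshitov) and $\Phi$ (Calmès-Fasel). None of this is conceptually subtle, but it must be laid out carefully to ensure that both generator computations land on the nose. Once this is done, multiplicativity of $\alpha_*$, $\Psi$ and $\Phi$ immediately upgrades the equality on generators to the full identity $\alpha_* \circ \Psi = \Phi$, completing the proof.
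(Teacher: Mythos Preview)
Your proposal is correct and follows essentially the same approach as the paper: reduce to $\alpha_* \circ \Psi = \Phi$, use multiplicativity (Lemma~\ref{lm:alpha_* ring}) to reduce to the generators $\langle a\rangle$ and $[a]$, and then verify each case by computing $\alpha$ on the Neshitov generators via the residue map and the canonical trivialization of $\omega_{\Aa^1}$ by $dx$. The paper carries out exactly these two computations with the same bookkeeping you anticipate.
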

\begin{proof}
Equivalently, we need to show that $ \Phi =  \alpha_* \circ \Psi$. Since all these maps are ring homomorphisms (see Lemma~\ref{lm:alpha_* ring}), we only need to check that the equation holds for the  generators of $\K_{\geqslant 0}^{MW}(k)$ as a $\Z$-algebra. That is, we need to check it for $\langle a \rangle \in \gw(k)$ and $[a] \in \K_1^{MW}(k)$, where $a \in k^{\times}$ (see~\cite[Section~8.3]{NeshitovFramedCorrMW}). 

1) For $\langle a \rangle \in \gw(k)$ we have to compute  
$ (\alpha_* \circ \Psi) \langle a \rangle = [\alpha(\Aa^1, ax, \pr_k)]$.
Under the residue map
$$ \partial \colon \K_1^{MW}(k(x)) \to \bigoplus_{t \in \Aa^{1(1)}} \K_0^{MW}(k(t), (\m_t / \m_t^2)^{\vee}) $$
we have the following image (see~\cite[Remark~3.21]{MorelA1-alg-top}):
 $$ \partial [ax] = 1 \otimes \overline{ax}^{\vee} = \langle a \rangle \otimes \overline{x}^{\vee} \in \K_0^{MW}(k, (\m_0 / \m_0^2)^{\vee}).$$ 
 After choosing the canonical orientation of $\Aa^1$,
$\langle a \rangle \otimes \overline{x}^{\vee}$ corresponds to the class of $\langle a \rangle \in \Chw^1_0(\Aa^1,  \, \omega_{\Aa^1})$. The pushfoward of $\langle a \rangle$ under
$$(\pr_k)_* \colon \Chw^1_0(\Aa^1,  \, \omega_{\Aa^1}) \to \Chw^0(\spk) = \gw(k)$$
is the class of $\langle a \rangle$, hence 
$\alpha(\Aa^1, ax, \pr_k) = \langle a \rangle \in \gw(k)$, coinciding with $\Phi(\langle a \rangle)$.

2) For $[a] \in \K_1^{MW}(k)$ we have to compute 
$ (\alpha_* \circ \Psi)[a] =[\alpha(\Gm, x-a, \id)]$.
The residue map
$$ \partial \colon \K_1^{MW}(k(x)) \to \bigoplus_{t \in \Gg_m^{(1)}} \K_0^{MW}(k(t), (\m_t / \m_t^2)^{\vee}) $$
gives $$ \partial [x-a] = {1 \otimes \overline{x-a}^{\vee} } \in \K_0^{MW}(k, (\m_a / \m_a^2)^{\vee}),$$ where $a$ is considered as a $k$-point of $\Gm$. 
By construction of the functor $\alpha$, one applies then the isomorphism $\Chw_a^1(\Gm) \simeq \Chw^1_a(\Gm, \, \omega_{\Gm})$, induced by the trivialization $\omega_{\Gm} \simeq \langle dx \rangle$. This way, the class of $ \partial [x-a]$ is given by
$$1 \otimes d(x-a)^{\vee} \otimes dx= \langle 1 \rangle \in \Chw^1_a(\Gm, \, \omega_{\Gm}),$$
since $dx = d(x-a)$.
The pushforward of $\langle 1 \rangle \in \Chw^1_a(\Gm, \, \omega_{\Gm})$ under $(\pr_k, \, \id)_*$ is the same, hence
$$\alpha(\Gm, x-a, \id) = \langle 1 \rangle \in \Chw^1_a(\Gm, \, \omega_{\Gm}) \subset \Cor(\spk, \Gm).$$

On the other hand, $\Phi([a])$ is the class of 
$\widetilde{\gamma}(\spk \xrightarrow{a} \Gm) \in \Cor(\spk, \Gm)$ in 
$ H^{1, 1}_{MW}(\spk, \Z)$. By construction of $\widetilde{\gamma}$, 
$$\widetilde{\gamma}(\spk \xrightarrow{a} \Gm) = \langle 1 \rangle 
\in \Chw^1_a(\Gm,  \,  \omega_{\Gm}).$$
\end{proof}

\sssec{}
Let $\xi \colon E \to X$ be a vector bundle of rank $r$ over a smooth $k$-scheme $X$. The purity isomorphism~\eqref{eq:purity} for the zero section $X \hookrightarrow E$ and the twist by the line bundle $\xi^* \det E^{\vee}$ on $E$ gives the canonical isomorphism:
\[\Chw^0(X) \simeq \Chw^r_X(E, \, \xi^* \det E^{\vee}).\]
The \textit{oriented Thom class} of $\xi \colon E \to X$ is defined as the class $t_{\xi} \in \Chw^r_X(E, \, \xi^* \det E^{\vee})$ that corresponds under the purity isomorphism to the class $ \langle 1 \rangle \in \Chw^0(X)$.

Finally, we have all the tools to prove Theorem~\ref{thm:main}.

\begin{proof}[Proof of Theorem~\ref{thm:main}]

Consider the diagram~\eqref{diag:injectivity}: we know that $\varepsilon_*$ is surjective (Proposition~\ref{prop:epi}) and that the left  triangle commutes (Lemma~\ref{lm: diag commutes}). Hence, it suffices to construct a homomorphism $\alpha_*^{\SL}$ such that the right triangle would commute. 

To do so, we use the alternative construction for the functor $\alpha \colon \Fr_*(k) \to  \Cor_k$ from~\cite[Section~4.3]{deloop2}. Take a correspondence $c = (U, \phi, g) \in \Fr_n(X, Y)$. By~\cite[Lemma~4.3.26]{deloop2}, one can assume that the framing  $\phi$ is a flat map, after refining the \'etale neighborhood $U$ if necessary. In that case, by~\cite[Lemma~4.3.24]{deloop2} one has an equality of cohomology classes  \[Z(\phi) = \phi^* (t_n),\] where $t_n \in \Chw_0^n(\Aa^n)$ is the oriented Thom class of the trivial vector bundle $ \Aa^n \to \spk$. 

Using this description, we construct the functor  
$$\alpha^{\SL} \colon \Fr_*^{\SL}(k) \to \Cor_k$$ as follows. It is identity on objects, and for correspondences of level $0$ we set $\alpha^{\SL} = \alpha$. 
Let $c = (U, \phi, g) \in Fr_n^{\SL}(X, Y)$ have the framing represented by a morphism
$\phi \colon U \to \TAU(n, N)$ for some $N = mp$, and a non-empty support $Z$. 
Denote by $\xi_N \colon \TAU(n, N) \to \GR(n, N)$ the projection 
and recall that there is a trivialization of $\det \TAU(n, N)$, defined in~\eqref{eq:sl-orientation}. 
This trivialization induces a trivialization of the line bundle $\xi_N^* \det \TAU(n, N)^{\vee} \to \TAU(n, N)$. 
 Hence the oriented Thom class of $\xi_N$ is an element of the Chow-Witt group with trivial twist: 
 $t_{\xi_N} \in \Chw^n_{\GR(n, N)}(\TAU(n, N)).$
We define $$Z(\phi) = \phi^*(t_{\xi_N}) \in \Chw^n_Z(U).$$
The cohomology class $Z(\phi)$ does not depend on the choice of $N$, because a  composition with the canonical embedding $i_{N,M} \colon \TAU(n, N) \hookrightarrow \TAU(n, N+M)$ induces an equality
$$(i_{N,M})^*(t_{\xi_{N+M}}) = t_{\xi_N}$$ by~\cite[Proposition~3.7(1)]{LevineEnumGeom}. Applying $\phi^*$ gives us
$$(i_{N,M} \circ \phi)^*(t_{\xi_{N+M}}) = \phi^* ((i_{N,M})^*(t_{\xi_{N+M}})) =\phi^*(t_{\xi_N}).$$

Finally, we set $$\alpha^{\SL}(c) = (q p, g)_*(Z(\phi)) \in \Cor(X, Y),$$ where $p \colon U \to \Aa^n_X$ is the \'etale neighborhood of $Z$ and $q \colon \Aa^n_X \to X$ is the projection. 

By construction, we get an equality of functors $\alpha = \alpha^{\SL} \circ \E \colon \Fr_*(k) \to \Cor_k$, where the functor $\E$ was defined in Section~\ref{sssec: functor E}. The map $\alpha^{\SL}$ factors through stabilization with respect to suspension, and we obtain the induced map \[\alpha^{\SL}_* \colon H_0(\ZF^{\SL}(\del, \Gg_m^{\wedge *})) \longrightarrow H_0(\Cor(\del, \Gg_m^{\wedge *}))\] such that $\alpha_* = \alpha^{\SL}_* \circ \varepsilon_*$. The claim follows.

\end{proof}

\bibliographystyle{alphamod}

\let\mathbb=\mathbf

{\small
\bibliography{ref}
}

\end{document}